\newtheorem{Theorem}{Theorem}[section]
\newtheorem{prop}[Theorem]{Proposition}
\newtheorem{lem}[Theorem]{Lemma}
\newtheorem{cor}[Theorem]{Corollary}
\theoremstyle{definition}
\newtheorem{rem}[Theorem]{Remark}
\newtheorem{Definition}[Theorem]{Definition}
\newcommand{\R}{\mathbb{R}}
\newcommand{\C}{\mathbb{C}}
\newcommand{\Q}{\mathbb{Q}}
\newcommand{\N}{\mathbb{N}}
\newcommand{\dd}{\mathrm{d}}
\DeclareMathOperator{\supp}{supp}
\DeclareMathOperator{\Tr}{tr}
\DeclareMathOperator{\diag}{diag}
\DeclareMathOperator{\di}{div}
\newcommand{\seb}[1]{{#1}}
\newcommand{\Lcal}{\mathcal{E}'}
\newcommand{\Ecal}{\mathcal{E}}
\newcommand{\skp}[2]{\left\langle {#1},{#2}\right\rangle}
\newcommand{\norm}[1]{\|#1\|}
\newcommand{\normlr}[1]{\left\|#1\right\|}
\newcommand{\normn}[1]{\|#1\|}
\newcommand{\normb}[1]{\bigl\|#1\bigr\|}
\newcommand{\normB}[1]{\Bigl\|#1\Bigr\|}
\newcommand{\normBB}[1]{\biggl\|#1\biggr\|}
\newcommand{\normBBB}[1]{\Biggl\|#1\Biggr\|}
\newcommand{\abs}[1]{|#1|}
\newcommand{\nablasym}{{\bf \varepsilon}}
\newcommand{\mol}{{\hat{v}_{2,\delta}}}
\title[Weak-strong uniqueness for fluid-structure interactions]{Weak-strong uniqueness for an elastic plate interacting with the Navier Stokes equation
}
\author{Sebastian Schwarzacher and Matthias Sroczinski} 
\address{Department of Mathematics and Physics, Charles University}
\numberwithin{equation}{section}
\begin{document}
	\maketitle
	\begin{abstract}
We show weak-strong uniqueness and stability results for the motion of a two or three dimensional fluid governed by the Navier-Stokes equation interacting with a flexible, elastic plate of Koiter type. The plate is situated at the top of the fluid and as such determines the variable part of a time changing domain (that is hence a part of the solution) containing the fluid. The uniqueness result is a consequence of a stability estimate where the difference of two solutions is estimated by the distance of the initial values and outer forces. For that we introduce a methodology that overcomes the problem that the two (variable in time) domains of the fluid velocities and pressures are not the same. The estimate holds under the assumption that one of the two weak solutions possesses some additional higher regularity. The additional regularity is exclusively requested for the velocity of one of the solutions resembling the celebrated Ladyzhenskaya-Prodi-Serrin conditions in the given framework. 
\vspace{4pt}

\noindent\textsc{MSC (2010):35Q35 (primary);35Q74, 35Q30, 35R37, 34A12, 35A02.} 

\noindent\textsc{Keywords:} Fluid-Structure interaction, Weak-strong uniqueness, Stability estimates, Variable Domains, Navier-Stokes equations, Elastic plates

\vspace{4pt}

\noindent\textsc{Date:} \today{}. 
\end{abstract}

\section{Introduction}

\noindent
The paper investigates the interaction between an elastic solid plate and a viscous incompressible fluid. 
%
For the fluid we will consider the three (or two) dimensional {\em Navier-Stokes equations}~\cite{GB1,Ler34}. For the solid we consider a shell or a plate that is modeled as a thin object of one dimension less than the fluid and which is assumed to be fixed on the top of a container (See Figure~1). For  modeling on  {\em elastic plates} see \cite{CiarletBook2,CiarletBook3} and the references therein. The fluid and the plate interact via a kinematic and a dynamic coupling condition on the moving interface.

Our main result consists in the {\em weak-strong uniqueness} of solutions for a flow in a variable 3D (or 2D) domain interacting with a 2D (or 1D) plate (see Theorem~\ref{the:1}). While the regularity of the weak solutions that we use are known to be satisfied for all weak solutions we assume additional regularity of the {\em velocity} of the strong solution, that can be related (via its index) to the celebrated Ladyzhenskaya-Prodi-Serrin conditions~\cite{Pro59,Ser62,Ser63,Lad67,Sve03}. These are conditions for solutions to Navier-Stokes equations in a fixed domain that imply their smoothness and uniqueness.

Please observe, that we do not {assume} any additional regularity of the solid displacement; in particular the domain of the strong fluid-velocity is not even assumed to be uniformly Lipschitz continuous. In order to handle the limited regularity assumptions (on the strong solution) rather complex estimates where necessary. Some of them depend sensitively on {\em a-priori estimates} for the solid deformation shown in~\cite{MuhSch19}. 

To measure the distance between two solutions it is necessary to introduce a change of variables as the domains of the two velocity fields depend on the solution itself. Moreover, since the solid deformation is governed by a hyperbolic equation a mollification in time is unavoidable. In this paper a methodology is introduced that overcomes both obstacles with operators that conserve the property of solenoidality (see Lemma~\ref{lem:molly}).



While the existence theory for weak solutions describing flexible (thin) shells interacting with fluids has been flourishing in the past years \cite{DE,DEGLT,Bou07,CG,Gal,BorSun13,LenRuz14,Len14,BorSun15,
SunBorMulti,muha2016existence,GraHil16,BreSch18,MuhSch19} the uniqueness and stability questions are rather untouched. The only available result for an elastic plate seems to be the work of~\cite{Gui10}; it treats a 1D elastic beam interacting with a 2D fluid whith slip-boundary conditions at the interface.\footnote{Actually some conditions in~\cite{Gui10} could be missing, as the estimate in formula (6.33) on page 25 seems sensitively incorrect. The estimate would only be correct if the distributional time-derivative was in the dual of a Sobolev space and not merely in the dual of its solenoidal subspace.} Otherwise, the only weak-strong uniqueness results for fluid-structure interactions are for non-elastic solids, namely rigid objects~\cite{Sta05,GlaSue15,CheMuhNec17,Bra19}. For fluid-structure interactions involving elastic materials there are some existence results where the uniqueness of strong solutions (in the class of strong solutions) is inherited from the methodology of existence.   
These are short time uniqueness results for strong solutions~\cite{CSS1,CSS2,BdV1,BouGue10comp,GraHil19}, global uniqueness results of strong solutions for small data~\cite{ChuLasWeb13,IgnKukLasTuf17} and the uniqueness for arbitrary times of strong solutions for a 1D visco-elastic plate interacting with a 2D fluid~\cite{GraHil16}. As a consequence of our estimates all constructed strong solutions (involving elastic plates) are unique within the class of {\em weak solutions}.

The {\em applications} within this framework consist in fluids interacting with various thin materials. Of particular interest are those in medicine and biology for arteries or the trachea~\cite{Quarteroni2000,FSIforBIO,hos04}. These fields relay strongly on robust computer simulations, many of which are built along the concept of {\em weak solutions}~\cite{heil2008solvers,helena,Ric17}. Stability results as the one presented here are very suitable to be adapted to such {\em numerical approximations}. We plan to perform that in a future paper.

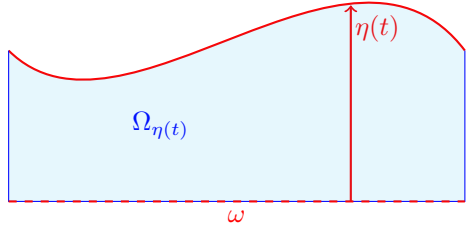
\begin{figure}[!h]\centering
\vspace{-1cm}
\begin{tikzpicture}[scale=1.0]
\draw[thick,red]   (8, 2) .. controls (9.5, 0.5) and (12.5, 4) .. (14, 2);
\draw[blue](8,2)--(8,0)--(14,0)--(14,2);
\draw[dashed, thick, red] (8,0)--(14,0);
\path node at (10,1) {\textcolor{blue}{$\Omega_{\eta(t)}$}};
\path node at (11,-0.2) {\textcolor{red}{\bf $\omega$}};
\draw[->,thick, red] (12.5,0)--(12.5,2.6);
\path node at (12.85,2.3) {\textcolor{red}{\bf $\eta(t)$}};
\fill[color=cyan,opacity=.1]  (8, 2) .. controls (9.5, 0.5) and (12.5, 4) .. (14, 2) -- (14,0) -- (8,0) -- (8,2);
\end{tikzpicture}
\caption{1D plate interacting with a 2D fluid}\label{fig_ALE}
\end{figure}

\subsection{Formulation of the problem}
We consider a $3D$ container whose top wall consist of a $2D$ Koiter type plate (or a $2D$ container whose walls consist of a $1D$ Koiter type plate). As is common for the analysis on plates we assume that the plate can move only upwards and downwards. The deformation of the plate is described by a bounded function $\eta: [0,T] \times {\omega} \to (\delta,\infty)$ for some time interval $[0,T]$, some  bounded domain $\omega\subset \R^2$ (or $\omega\subset\R$) that has a  Lipschitz boundary and some $\delta\in (0,1)$. The time-dependent fluid domain is defined by
$$\Omega_\eta(t):=\{(x,y) \in {\omega} \times (0,\infty):0 \le y \le \eta(t,x)\},~t \in [0,T].$$ 
Here and in the following $x$ denotes a $2D$ (or $1D$), $y$ a $1D$ and $z=(x,y)$ a $3D$ (or $2D$) variable.
With some misuse of notation we consider the space-time domain 
$$[0,T] \times \Omega_\eta(t):=\bigcup_{t \in [0,T]}\{t\} \times \Omega_\eta(t).$$
The motion of the fluid is described by the incompressible Navier-Stokes equations
\begin{align}
\label{eq:1}
\rho_f (\partial_t v+[\nabla v] v) &=\mu_f\Delta v-\nabla p+\rho_f f& \text{ on }[0,T] \times \Omega_\eta(t),\\
\label{eq:2}
\di v&=0&\text{ on } [0,T] \times \Omega_\eta(t),
\end{align}
where the fluid's velocity field $v$ and the pressure $p$ are the unknown quantities, $\rho_f$ is the fluid density, $\mu_f$ the fluid viscosity and $f$ is a given outer force (e.g.\ gravity). By
 $\sigma(v,p)=2\mu_f \nablasym v -p\mathbb{I}$ we denote the fluid stress tensor, where $\nablasym v:=\frac12(\nabla v+(\nabla v)^T)$ is the symmetric part of the gradient and $\mathbb{I}$ denotes the identity matrix in 3D, (2D). The incompressibility condition implies that the pressure is determined by the velocity field. On the non-moving parts of the container $B_c=\omega \times \{0\}\cup \partial \omega\times [0,1]$ we assume no-slip boundary conditions 
\begin{equation}
\label{eq:5}
v=0 \text{ on }[0,T] \times B_c.
\end{equation}
The moving part of the shell satisfies a linearized plate equation of Koiter type with a source term stemming from the forces the fluid exerts on the shell 
\begin{equation}
\label{eq:33}
\rho_s h_0 \partial_t^2\eta+\Lcal(\eta,\partial_t\eta)=\mathcal{F}(u,p,\eta)+\rho_s g,~\text{on}~[0,T] \times {\omega},
\end{equation}
with Dirichlet boundary conditions
\begin{equation}
\label{eq:6}
\eta=1,~\nabla\eta=\Delta \eta=0~ \text{on}~(0,T) \times \partial{\omega}.
\end{equation}
Here $\eta$ is the (scalar valued) unknown deformation, $\rho_s$ is the solid density, $h_0$ is the thickness of the plate, $\Lcal(\eta,\partial_t\eta)$ is the $L^2$ gradient of the elastic and dissipative potentials of the deformation of the plate, $\mathcal{F}$ are forces stemming from the fluid and $g$ is a given outer force. Due to the troubles between hyperbolic equations and non-linearities we have to assume that $\Lcal(\eta,\partial \eta)$ is of the following form
\[
\Lcal(\eta,\partial_t\eta):= \alpha\Delta^2 \eta - \tilde{\beta} \Delta\eta -\tilde{\gamma} \Delta \partial_t \eta
\]
with $\alpha>0$ and $\tilde{\beta},\tilde{\gamma}\geq 0$.
Note that the equations for the fluid are stated in Eulerian coordinates while the equations for the solid are stated in Lagrangian coordinates.

The fluid and the shell are coupled via a kinematic and a dynamic coupling condition on the moving interface. For expressing the coupling condtions we define the variable transform from Langrangian to Eulerain coordinates
$$ \psi: [0,T] \times {\omega} \to [0,T] \times \R^3,\quad (t,x) \mapsto (t,x,\eta(t,x)).$$
The dynamic coupling condition states that the total force in normal direction at the interface is zero
\begin{equation}
\label{eq:9}
\mathcal{F}(v,\eta,p)=-(0,1)^t((\nabla v -p\mathbb{I})\circ\psi)n\cdot n~\text{on}~[0,T]\times {\omega},
\end{equation}
where $n(t,x)=(-\nabla \eta,1)/(1+|\nabla\eta|^2)^\frac{1}{2}$ is the outer normal of $\Omega_\eta(t)$ at the point $(x,\eta(x))$.

We assume a no slip kinematic boundary condition, i.e. the fluid and the structure velocity are equal at the interface
\begin{equation}
\label{eq:4}
v\circ \psi =(0,\partial_t \eta)^T~\text{on}~[0,T]  \times {\omega},
\end{equation}
To complete the equations we impose initial conditions
\begin{align}
\label{eq:7}
v(0)&=v_0 \text{ on } ~\Omega_\eta(0),\\
\label{eq:8}
\eta(0)=\eta_0,~~ \partial_t \eta(0)&=\eta^* \text{ on }~ {\omega}.
\end{align}
We will refer to \eqref{eq:1}-\eqref{eq:8} as \textit{FSI} in the following.

By formally multiplying equation \eqref{eq:1} by $v$, \eqref{eq:33} by $\partial_t \eta$ and integrating over $\Omega_\eta(t)$, $\omega$ and $(0,t)$ we get (using Korn's identity Lemma~\ref{lem:korn} and Absorption) the energy inequality
\begin{align} \begin{aligned}
\label{energy}
&\|v(t)\|_{L^2(\Omega_{\eta(t)})}^2+\|\partial_t \eta(t)\|^2_{L^2(\omega)}+\|\nabla^2 \eta(t)\|^2_{L^2(\omega)}
+ \int_{0}^t \|\nabla v(\tau)\|^2_{L^2(\Omega_{\eta(\tau)})} d\tau 
\\
&\quad \le c\bigg( \|v_0\|^2_{L^2(\Omega_{\eta_0})}+\|\eta_1\|^2_{L^2(\omega)}+\|\nabla^2 \eta_0\|^2_{L^2(\omega)}+
+ \int_0^t\norm{f(\tau)}^2_{L^2(\Omega_{\eta(\tau)})}+\norm{g(\tau)}_{L^2(\omega)}\, d\tau\bigg).
 \end{aligned} \end{align}
In the paper we use the standard notation for Lebesgue and Sobolev spaces. 
The weak solutions to \textit{FSI} are defined in the following function spaces.
\begin{align*}
\mathcal{V}_\eta(t)&=\{v \in H^1(\Omega_\eta(t)): \di v=0\text{ in }\Omega_{\eta(t)},v=0~\text{on}~B_c\},\\
\mathcal{V}_F&=L^\infty((0,T),L^2(\Omega_\eta(t))\cap L^2((0,T),\mathcal{V}_\eta(t)),\\
\mathcal{V}_K&=W^{1,\infty}([0,T],L^2({\omega}))\cap \{\eta \in L^\infty([0,T],H^{2}({\omega})):\eta=1,~\nabla\eta{=\Delta \eta}=0~\text{on}~\partial\omega\} 
\\
\mathcal{V}_S&=\{(v,\eta) \in \mathcal{V}_F\times \mathcal{V}_K:v\circ \psi=\partial_t \eta\},\\
\mathcal{V}_T&=\{(w,\xi) \in \mathcal{V}_F\times \mathcal{V}_K:w \circ \psi=\xi,~ \partial_t w \in L^2(0,T;L^2(\Omega_\eta(t)))\}
\end{align*}
For the distributional time derivative we introduce the following space
\[
\tilde{W}^{-l,p'}(\Omega):=(\{f\in W^{1,p}(\Omega)\, :\, f=0\text{ on }B_c\})^*.
\]
\begin{Definition} Let $f\in L^2([0,T]\times \omega\times \R)$, $g\in L^2([0,T]\times \omega$, $\eta_0\in H^2_0(\omega)$, $\eta^*\in L^2(\omega)$ and $v_0\in L^2(\Omega_{\eta_0})$. 
Then we call a pair $(v,\eta) \in \mathcal{V}_S$ a weak solution to \textit{FSI} if it satisfies the energy inequality \eqref{energy}, if
\begin{align}
\label{eq}
\begin{aligned}
&\frac{d}{dt}\left(\rho_f\int_{\Omega_\eta(t)} v \cdot w dz\right)-\rho_f\int_{\Omega_\eta(t)} v\cdot \partial_t w -2\mu\nablasym v : \nablasym w+\rho_f(v \otimes v) : \nabla w dz\\
&\quad +h_0 \rho_s \partial_t\left(\int_{{\omega}}\partial_t \eta \xi ~dx\right)-h_0 \rho_s\int_{{\omega}} \partial_t \eta \xi_t\,dx+  \skp{\Lcal(\eta,\partial_t\eta)}{\xi}= \rho_f\int_{\Omega_{\eta}}f\cdot w\, dz + \rho_s\int_{{\omega}}g\xi\, dx
\end{aligned}
\end{align}
for all $(w,\xi) \in \mathcal{V}_T$ as an equation in $\mathcal{D}'(0,T)$ and if it attains the initial conditions in the sense of the $L^2$ weak convergence.
\end{Definition} 
%
%

\subsection{Main results}
Our main result is the following.\footnote{For the notation please see the next section.} 
\begin{Theorem}
	\label{the:1}
	In case that $\omega\subset \R^2$ let $r>2$ and $s>3$ and in case that $\omega\subset \R$ let $r=2$ and $s=2$.  
Assume that $(v_2,\eta_2)$ is a weak solutions to \textit{FSI} on $[0,T]$, such that $\min_{[0,T]\times {\omega}}\eta_2>0$ and additionally that $v_2 \in L^{r}(0,T;W^{1,s}(\Omega_{\eta_2})) 
	$ and $\partial_t v_2 \in L^2(0,T;\tilde{W}^{-1,r}(\Omega_{\eta_2}))$. Then this solution is unique in the class of weak solutions.
In particular, if $(v_1,\eta_1)$ is any weak solution to \textit{FSI} on $[0,T_0]$ (for any $T_0>0$) and if $v_1(0)=v_2(0)$, $\eta_1(0)=\eta_2(0)$, $\partial_t\eta_1(0)=\partial_t \eta_2(0)$, than $(v_1,\eta_1)\equiv(v_2,\eta_2)$ as an equation in $\mathcal{V}_S$ on $[0,T_0]$.
\end{Theorem}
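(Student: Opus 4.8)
The plan is to run a relative energy (weak--strong) argument in the spirit of weak--strong uniqueness for the Navier--Stokes equations, the two genuinely new difficulties being that $v_1$ and $v_2$ live on the different, time--dependent domains $\Omega_{\eta_1(t)}$ and $\Omega_{\eta_2(t)}$ --- of which, by assumption, only the second is guaranteed non-degenerate ($\min\eta_2>0$) --- and that the plate equation is hyperbolic, so $\partial_t\eta_i$ is only $L^2$ in time and neither $\eta_1-\eta_2$ nor its time derivative is directly an admissible test object. Both obstacles are handled by the solenoidality--preserving transport/mollification operator of Lemma~\ref{lem:molly}: from the maps $\psi_i(t,x)=(t,x,\eta_i(t,x))$ one builds a family of maps $\Phi_t\colon\Omega_{\eta_1(t)}\to\Omega_{\eta_2(t)}$ --- of the limited regularity afforded by the weak class together with the a-priori estimates for the solid deformation of \cite{MuhSch19}, and reducing to the identity whenever $\eta_1(t)=\eta_2(t)$ --- and an associated solenoidal, time-mollifying transport operator. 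Applied to $(v_1,\partial_t\eta_1)$ it yields, for a small parameter $\kappa>0$, an admissible test couple $(\hat v_1^\kappa,\hat\xi_1^\kappa)\in\mathcal V_T$ over $\Omega_{\eta_2(t)}$ --- divergence free, with the correct interface trace $\hat v_1^\kappa\circ\psi_2=(0,\hat\xi_1^\kappa)$, and with $\partial_t\hat v_1^\kappa\in L^2(0,T;L^2)$ --- that converges as $\kappa\to0$ to $(\hat v_1,\partial_t\eta_1)$, where $\hat v_1$ is the transport of $v_1$ onto $\Omega_{\eta_2(t)}$ (divergence free, in the natural energy class); a back-transport of $v_2$ onto $\Omega_{\eta_1(t)}$ is constructed symmetrically.

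Next I pair the momentum equation for $v_2$ --- which, thanks to $v_2\in L^r(0,T;W^{1,s})$ and $\partial_t v_2\in L^2(0,T;\tilde W^{-1,r})$, holds as an identity in $L^2(0,T;\tilde W^{-1,r})$ and may therefore be tested against merely $H^1$-in-space functions --- with $\hat v_1^\kappa$, insert the mollified back-transport of $v_2$ into the weak formulation \eqref{eq} for $(v_1,\eta_1)$, add the energy inequality \eqref{energy} for $(v_1,\eta_1)$ and the energy \emph{identity} for $(v_2,\eta_2)$ (this last justified by a mollification-in-time argument adapted to the moving domain, again using the additional regularity of $v_2$), and subtract. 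Letting $\kappa\to0$ --- with due care for the commutators between the time mollification and both the convective term and the time-dependent change of variables --- and invoking Korn's identity (Lemma~\ref{lem:korn}) to turn the viscous terms into a full gradient, produces for
\[
\mathcal R(t):=\tfrac{\rho_f}{2}\norm{(\hat v_1-v_2)(t)}^2_{L^2(\Omega_{\eta_2(t)})}+\tfrac{h_0\rho_s}{2}\norm{\partial_t(\eta_1-\eta_2)(t)}^2_{L^2(\omega)}+\tfrac{\alpha}{2}\norm{\Delta(\eta_1-\eta_2)(t)}^2_{L^2(\omega)}+\dots
\]
a differential inequality of the form
\[
\mathcal R(t)+c\!\int_0^t\!\norm{\nabla(\hat v_1-v_2)}^2_{L^2(\Omega_{\eta_2(\tau)})}\,d\tau\ \le\ \mathcal R(0)+\int_0^t\bigl(K(\tau)+C\bigr)\mathcal R(\tau)\,d\tau,
\]
the right-hand side also carrying the (here vanishing) differences of outer forces and initial data.

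It remains to estimate the terms collected in $K$ and $C$. The fluid one is the convective remainder, whose dangerous piece --- after an integration by parts using $\di(\hat v_1-v_2)=0$ --- is $\int_{\Omega_{\eta_2(\tau)}}\bigl((\hat v_1-v_2)\cdot\nabla\bigr)(\hat v_1-v_2)\cdot v_2$; bounding it by Hölder's inequality, the Sobolev embedding $W^{1,s}\hookrightarrow L^\infty$ (valid in 3D as $s>3$) or Ladyzhenskaya's inequality in 2D, and Young's inequality, one absorbs the $\nabla(\hat v_1-v_2)$ factor into the dissipation and is left with a multiplier $K(\tau)$ that is a power of $\norm{v_2(\tau)}_{W^{1,s}}$, integrable in time exactly by virtue of the Ladyzhenskaya--Prodi--Serrin-type relation between $r$ and $s$. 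The other error terms are geometric: those generated by $\Phi_t$ (its Jacobian, the time derivative of $\Phi_t$, the difference of the two pulled-back metrics) and the interface contributions stemming from \eqref{eq:9} and the coupling; these are bounded by $C\,\mathcal R(\tau)$ plus an arbitrarily small multiple of the dissipation, using $\Phi_t\to\mathrm{id}$ precisely as $\eta_1-\eta_2\to0$ together with the a-priori deformation estimates of \cite{MuhSch19}. I expect these geometric bounds --- and the vanishing of the $\kappa\to0$ commutators --- to be the main obstacle, since $\eta_1$ is not assumed even uniformly Lipschitz, so no $L^\infty$ control of $\nabla\eta_1$ is available and one must work with the higher-order norm $\norm{\nabla^2\eta_1}_{L^2}$ and the fine estimates of \cite{MuhSch19} instead. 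Once the displayed inequality is in force, Grönwall's lemma gives $\mathcal R\equiv0$ on $[0,T_0]$ as soon as $v_1(0)=v_2(0)$, $\eta_1(0)=\eta_2(0)$ and $\partial_t\eta_1(0)=\partial_t\eta_2(0)$; since $\Phi_t=\mathrm{id}$ exactly when $\eta_1(t)=\eta_2(t)$, this forces $\eta_1\equiv\eta_2$, whence $\hat v_1=v_1$ and $v_1\equiv v_2$, i.e.\ $(v_1,\eta_1)\equiv(v_2,\eta_2)$ in $\mathcal V_S$, which is the claimed uniqueness.
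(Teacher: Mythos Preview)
Your overall strategy matches the paper's: the solenoidality-preserving Piola-type transport between the two moving domains (Lemma~\ref{testf}), the time-mollification of Lemma~\ref{lem:molly} to manufacture admissible test couples, the combination of the energy inequality for the weak solution with two cross-testings, the Ladyzhenskaya--Prodi--Serrin-type control of the convective remainder, and Gronwall. In the paper, Theorem~\ref{the:1} is indeed a one-line consequence of the resulting stability estimate (Theorem~\ref{the:2}).

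There is, however, one concrete misstep: you center the relative energy on $\Omega_{\eta_2}$ and write the dissipation as $\int_0^t\|\nabla(\hat v_1-v_2)\|_{L^2(\Omega_{\eta_2})}^2\,d\tau$. This term is in general not finite. Your $\hat v_1$ (the paper's $\check v_1$) is the Piola transport of the \emph{weak} solution, and its gradient carries a factor $|\nabla^2\gamma|\,|v_1|$ with $\gamma=\eta_2/\eta_1$; since $\nabla^2\gamma$ is only in $L^\infty(0,T;L^2(\omega))$ while $v_1$ is merely in $L^2(0,T;L^6)$, one obtains $\hat v_1\in L^2(0,T;W^{1,p}(\Omega_{\eta_2}))$ only for $p<2$ (Lemma~\ref{liste}(3) and Remark~\ref{rem1}), not $p=2$. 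The paper exploits the asymmetry between the two solutions by centering instead on $\Omega_{\eta_1}$: the relative energy is built from $w_1=v_1-\hat v_2$, where $\hat v_2$ is the transport of the \emph{strong} solution. Because $v_2\in L^r(0,T;W^{1,s})$ with $s>3$ (hence $v_2\in L^\infty$ in space), the product $|\nabla^2\gamma|\,|\tilde v_2|$ lands in $L^2$, so $\hat v_2$ stays in $H^1(\Omega_{\eta_1})$ and $\|\nablasym w_1\|_{L^2(\Omega_{\eta_1})}^2$ is well-defined and furnishes the dissipation. The transported weak solution $\check v_1$ is still needed, but only as a \emph{test function} for the strong equation on $\Omega_{\eta_2}$, where the pairing $\langle\nablasym v_2,\nablasym\check v_1\rangle$ makes sense because $\nabla v_2\in L^s$, $s>3$, compensates $\nabla\check v_1\in L^p$, $p<2$. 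Once you swap reference domains accordingly, your sketch coincides with the paper's argument.
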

In some situations strong solutions are known to exist. In particular, in the case of $\omega=[0,L]$ and $\tilde{\gamma}>0$ strong solutions exist for arbitrary times~\cite{GraHil16}. This means that our result implies the following corollary.
\begin{cor}
In the 2D case ($\omega=[0,L]$) with $\tilde{\gamma}>0$ and smooth initial values, there exists a strong solution to \textit{FSI} which is unique in the class of weak solutions.
\end{cor}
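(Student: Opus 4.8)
The plan is to combine two inputs: the global-in-time existence of strong solutions in this regime, due to Grandmont and Hillairet~\cite{GraHil16}, and the weak--strong uniqueness of Theorem~\ref{the:1}. The key observation that makes the two fit together without any further work on the velocity is that, in the two-dimensional case ($\omega=[0,L]$), the exponents in Theorem~\ref{the:1} are $r=s=2$, so the Ladyzhenskaya--Prodi--Serrin-type hypothesis $v_2\in L^r(0,T;W^{1,s}(\Omega_{\eta_2}))$ degenerates to $v_2\in L^2(0,T;H^1(\Omega_{\eta_2}))$, which is already built into the notion of a weak solution (it is part of $\mathcal{V}_F$). Hence the only assumptions of Theorem~\ref{the:1} that are genuinely stronger than ``$(v_2,\eta_2)$ is a weak solution'' are $\min_{[0,T]\times\omega}\eta_2>0$ and $\partial_t v_2\in L^2(0,T;\tilde{W}^{-1,2}(\Omega_{\eta_2}))$.

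First I would invoke~\cite{GraHil16} to produce, for the prescribed smooth (and compatible) initial data $v_0,\eta_0,\eta^*$ and for arbitrary $T>0$, a strong solution $(v_2,\eta_2)$ on $[0,T]$; it is precisely the presence of the dissipative term $-\tilde\gamma\Delta\partial_t\eta$ with $\tilde\gamma>0$ that yields this global existence and, in particular, prevents the plate from reaching the bottom of the container, so that $\eta_2>0$ on all of $[0,T]\times\bar\omega$ and therefore, by continuity of $\eta_2$ on this compact set, $\min_{[0,T]\times\omega}\eta_2>0$. The second step is a bookkeeping verification that this strong solution is a weak solution in the precise sense of the Definition above and satisfies the hypotheses of Theorem~\ref{the:1}: the elastic operator $\Lcal$ used here is of the linear Koiter form considered in~\cite{GraHil16}, the clamped conditions~\eqref{eq:6} and the no-slip conditions~\eqref{eq:5},~\eqref{eq:4} agree, and the strong-solution regularity is more than enough to justify the integrations by parts leading to the weak formulation~\eqref{eq} and to the energy inequality~\eqref{energy} (indeed with equality); moreover a strong solution has $\partial_t v_2\in L^2(0,T;L^2(\Omega_{\eta_2}))\hookrightarrow L^2(0,T;\tilde{W}^{-1,2}(\Omega_{\eta_2}))$, so the remaining hypothesis of Theorem~\ref{the:1} holds.

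Once these checks are in place, Theorem~\ref{the:1} applies to $(v_2,\eta_2)$ and yields: every weak solution $(v_1,\eta_1)$ to \textit{FSI} on $[0,T]$ with $v_1(0)=v_2(0)$, $\eta_1(0)=\eta_2(0)$, $\partial_t\eta_1(0)=\partial_t\eta_2(0)$ satisfies $(v_1,\eta_1)\equiv(v_2,\eta_2)$ in $\mathcal{V}_S$; since $T>0$ is arbitrary, $(v_2,\eta_2)$ is the unique weak solution with these initial data. I expect the only real obstacle to be the second step, i.e.\ faithfully translating the setup and solution concept of~\cite{GraHil16} (function spaces, normalization of the physical constants, boundary conditions, and in particular the fact that their strong solution does satisfy~\eqref{eq} and~\eqref{energy} as required here) into the framework of the present paper; this is essentially routine but has to be carried out carefully, since Theorem~\ref{the:1} is phrased for weak solutions in exactly the sense fixed by the Definition, and a verbatim citation from another source will typically use slightly different conventions. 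Everything beyond that step is a direct application of Theorem~\ref{the:1}.
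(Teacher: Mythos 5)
Your proposal is correct and follows exactly the route the paper has in mind: the corollary is obtained by invoking the global strong existence result of Grandmont--Hillairet for $\omega=[0,L]$, $\tilde\gamma>0$, and then applying Theorem~\ref{the:1}, after observing that in the 2D case the exponents $r=s=2$ make the Ladyzhenskaya--Prodi--Serrin-type hypothesis coincide with the energy regularity already present in $\mathcal{V}_F$, so the only genuine extra hypotheses to verify for the strong solution are $\min\eta_2>0$ and $\partial_t v_2\in L^2(0,T;\tilde{W}^{-1,2})$, both of which follow from the strong regularity. You have merely spelled out in detail the bookkeeping that the paper leaves implicit.
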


\begin{rem}[Minimality of the regularity assumptions on $v_2$.]
\label{rem:serrin}
Let us compare our assumptions to the case of a non-moving domain for a 3D fluid; i.e.\ $\eta\equiv\eta_c$ and therefore $\Omega_{\eta_c}\subset \R^3$ is constant in time and $v_1$, $v_2 \in\mathcal{V}_F$ are weak (Leray-Hopf) solutions. If additionally $v_2$ satisfies the Ladyzhenskaya-Prodi-Serrin condition, namely $v_2 \in  L^r(0,T;L^{q}(\Omega))$ for $\frac{3}q+\frac{2}r=1$, 
then from the well known regularity and uniqueness result~\cite{Pro59,Ser62,Ser63,Lad67,Sve03} on the Navier-Stokes equations it follows:
$$\|w(t)\|^2 \le C\|w(0)\| \exp\left(c\int_0^t \|v_2\|_{L^{q}}^rdy\right)$$
which in particular, implies the weak-strong uniqueness. In order to obtain the above estimate a {\em regularity theory} for solutions satisfying the Ladyzhenskaya-Prodi-Serrin condition is used. 

For the here considered fluid-structure interactions a regularity theory for weak solutions satisfying the Ladyzhenskaya-Prodi-Serrin condition is not known to be satisfied up to date. Actually, it is debatable whether such a theory can expected to be true. (This counts even for 2D fluid-structure interactions in case when $\tilde{\gamma}=0$.) However, the borders for the exponents in our assumptions have the same index as in the exponents in the Ladyzhenskaya-Prodi-Serrin condition. We briefly explain this here: We assume in 3D that the stronger solution satisfies $v_2 \in L^r(0,T;W^{1,s}(\Omega))$ for some $s>3$ and $r>2$. As $W^{1,s}(\Omega) \hookrightarrow L^\infty(\Omega)$ for all $s>3$ the corresponding borderline exponent for $v_2$ is the one of $L^2(0,T; L^\infty(\Omega))$ which has the index $\frac{3}q+\frac{2}r=\frac{3}{\infty}+\frac{2}{2}=1$.

Please observe that in $2D$ no further assumption on the gradient are necessary beyond its energy estimate. Our stronger assumptions on the weak time-derivative are necessary both in 2D and 3D. This is again due to the fact that a regularity theory for solutions satisfying the Ladyzhenskaya-Prodi-Serrin condition might not be valid.  While the bounds on the {\em index} for the spaces we request for the weak time-derivative (of the stronger solution) are in coherence with weak solution, we have to assume that the negative space is considerably smaller; i.e.\ the dual of the Sobolev space and not the dual of its solenoidal subspace. 
\end{rem}

Further we prove the following stability estimate.
\begin{Theorem}
	\label{the:2}
Let $(v_2,\eta_2)$ be weak solutions to \textit{FSI} on $[0,T]$, such that $\min_{[0,T]\times {\omega}}\eta_2>0$ and that additionally $v_2 \in L^{r}(0,T;W^{1,s}(\Omega_{\eta_2}))$ 
	 and $\partial_t v_2 \in L^2(0,T;\tilde{W}^{-1,r}(\Omega_{\eta_2}))$  for any  $s>3$ and any $r>2$. If $(v_1,\eta_1)$ is a weak solution to \textit{FSI} on $[0,T]$, then for $\tilde{v_2}(t,x,y)=v_2(t,x, y \frac{\eta_1(t,x)}{\eta_2(t,x)})$ we find that
	\begin{align*}
&\sup_{t\in [0,T]}\|(v_1-\tilde{v}_2)(t)\|_{L^2(\Omega_{\eta_1(t)})}^2+\|\partial_t(\eta_1-\eta_2)(t)\|_{L^2(\omega)}^2+\|
(\eta_1-\eta_2)(t)\|^2_{H^2(\omega)}
\\
&\qquad 
+ \int_{0}^T \|(v_1-\tilde{v}_2)(\tau)\|_{H^1(\Omega_{\eta_1(\tau)})}^2 d\tau 
\\
&\quad \le C (\|v_1^0-\tilde{v}_2^0\|_{L^2(\Omega_{\eta_1^0})}^2
+\|\eta_1^*-\eta_2^*\|_{L^2(\omega)}^2)
+\|(\eta_1^0-\eta_2^0)\|^2_{H^2(\omega)}
\\
&\quad  
+C \int_0^T \|(f_1-\tilde{f}_2)(\tau)\|_{H^1(\Omega_{\eta_1(\tau)})}^2+\|(g_1-g_2)(\tau)\|_{L^2(\omega)}^2 d\tau ,
	\end{align*}
	where the constant depends on $\omega,T$, the assumed bounds on $v_2$, the $L^2$-bounds of $f_1,f_2$ and (symmetrically) on the two deformations $\eta_1,\eta_2$ via the bounds related to the energy estimates and via Theorem~\ref{thm:boris}. 
\end{Theorem}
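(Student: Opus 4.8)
\medskip
\noindent\emph{Proof strategy.} The plan is to transport both solutions onto the single domain $\Omega_{\eta_1}$ and run a relative-energy argument. Let $\Phi_t\colon\Omega_{\eta_1(t)}\to\Omega_{\eta_2(t)}$ be the vertical rescaling $\Phi_t(x,y)=(x,y\,\eta_2(t,x)/\eta_1(t,x))$, so that $\tilde v_2:=v_2\circ\Phi$ is defined on $[0,T]\times\Omega_{\eta_1}$ and, by the kinematic coupling \eqref{eq:4}, satisfies $\tilde v_2\circ\psi_1=(0,\partial_t\eta_2)^T$. Hence the difference $w:=v_1-\tilde v_2$, $\xi:=\eta_1-\eta_2$ obeys $w\circ\psi_1=(0,\partial_t\xi)^T$, so that $(w,\xi)$ formally belongs to the test class over $\Omega_{\eta_1}$; the price is that $\di\tilde v_2\neq0$, but $\di\tilde v_2$ is an explicit expression in $\eta_1,\eta_2,\nabla\eta_1,\nabla\eta_2$ and $v_2$ whose size is controlled by $\|\xi\|_{H^2(\omega)}$ together with the a-priori bounds of Theorem~\ref{thm:boris} (the free boundary is not even uniformly Lipschitz, so these bounds, ultimately from \cite{MuhSch19}, are essential here). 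The goal is then to close a Gronwall inequality for
\[
E(t):=\|w(t)\|_{L^2(\Omega_{\eta_1(t)})}^2+\|\partial_t\xi(t)\|_{L^2(\omega)}^2+\|\xi(t)\|_{H^2(\omega)}^2+\int_0^t\|w(\tau)\|_{H^1(\Omega_{\eta_1(\tau)})}^2\,d\tau .
\]

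Because the plate equation \eqref{eq:33} is hyperbolic, $\partial_t\eta_2$ is only $L^\infty_tL^2_x$ and $v_2$ cannot serve as a test function directly; a mollification in time is unavoidable. This is exactly what Lemma~\ref{lem:molly} provides: a family $\mol$ on $\Omega_{\eta_1}$ which (i) is solenoidal, (ii) satisfies the coupling condition with a time-mollification of $\partial_t\eta_2$, (iii) converges to $\tilde v_2$ as $\delta\to0$ in $C([0,T];L^2)\cap L^2(0,T;H^1)$ over $\Omega_{\eta_1}$, and (iv) has a distributional time derivative controlled using only $\partial_t v_2\in L^2(0,T;\tilde W^{-1,r}(\Omega_{\eta_2}))$. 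Solenoidality is restored by a $\Bog$ovskii-type corrector applied to $\di\tilde v_2$; this is precisely why the weak time derivative of the strong solution must lie in the dual of the \emph{full} Sobolev space rather than of its solenoidal subspace, cf.\ Remark~\ref{rem:serrin}.

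With these tools I would test the weak formulation \eqref{eq} for $(v_1,\eta_1)$ with the pair $(\mol,\ \text{mollified }\partial_t\eta_2)$; use that, by its extra regularity, $(v_2,\eta_2)$ admits an associated pressure $p_2\in L^r_tL^r_x$ and satisfies its momentum balance pointwise in time, which may be paired with $w$ after transport to $\Omega_{\eta_2}$ (alternatively, a symmetric solenoidal correction of $v_1\circ\Phi^{-1}$ via Lemma~\ref{lem:molly}); and add the energy inequality \eqref{energy} for $(v_1,\eta_1)$ to the energy equality for $(v_2,\eta_2)$. Summing, the diagonal quadratic terms and the mixed products assemble into $\tfrac{d}{dt}E$ --- for the viscous part via Korn's identity (Lemma~\ref{lem:korn}), for the solid part via the good signs of $\alpha\Delta^2$, $-\tilde\beta\Delta$ and $-\tilde\gamma\Delta\partial_t$ --- up to three kinds of remainders: change-of-variables errors coming from $\tilde v_2$ versus $v_2$ (including $\di\tilde v_2$ and $p_2\,\di\tilde v_2$), bounded by $\epsilon\|w\|_{H^1}^2+C(1+\|\nabla v_2\|_{L^s}^{\kappa})E(t)$; time-mollification errors that are $o_\delta(1)$; and the force differences $\|f_1-\tilde f_2\|_{H^1}^2+\|g_1-g_2\|_{L^2}^2$.

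The only genuinely critical remainder is the convective one, which after all cancellations reduces to $\rho_f\int_{\Omega_{\eta_1}}(w\otimes w):\nabla\tilde v_2\,dz$ plus lower-order terms. In the $3D$ case $W^{1,s}\hookrightarrow L^\infty$ for $s>3$, so $\bigl|\int(w\otimes w):\nabla\tilde v_2\bigr|\le C\|\nabla v_2\|_{L^s}\|w\|_{L^{2s/(s-1)}}^2$, and interpolating $L^{2s/(s-1)}=[L^2,L^6]_{\theta}$ with $\theta=3/(2s)$ followed by Young's inequality gives $\le\epsilon\|w\|_{H^1}^2+c\,\|\nabla v_2\|_{L^s}^{\kappa}\|w\|_{L^2}^2$ with $\kappa=\tfrac{2s}{2s-3}<2<r$ (the strict inequality $\kappa<2$ holding precisely because $s>3$, which is the index matching the Ladyzhenskaya--Prodi--Serrin borderline), so that $\int_0^T\|\nabla v_2\|_{L^s}^{\kappa}\,d\tau<\infty$ by hypothesis; in the $2D$ case the energy bound $\nabla v_2\in L^2_tL^2_x$ already suffices and no extra assumption on $\nabla v_2$ is needed. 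Absorbing every $\epsilon\|w\|_{H^1}^2$ into the left-hand side, letting $\delta\to0$, and applying Gronwall's lemma yields the asserted estimate, with the constant depending on $\omega$, $T$, the assumed bounds on $v_2$, the $L^2$-bounds of $f_1,f_2$ and, symmetrically, on $\eta_1,\eta_2$ through the energy estimates and Theorem~\ref{thm:boris}. The main obstacle is Lemma~\ref{lem:molly} itself: producing $\mol$ with all four properties simultaneously under the very weak regularity of the interface, and in a way compatible with the kinematic coupling; once that is available, matching the convective estimate to the Serrin index is the only remaining delicate point.
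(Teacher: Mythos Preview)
Your overall architecture --- relative energy, mollification in time, absorb the convective term via interpolation, Gronwall --- matches the paper. The decisive technical choice, however, is different, and the difference matters.

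You transport $v_2$ by plain composition, $\tilde v_2=v_2\circ\Phi$, accept that $\di\tilde v_2\neq0$, and propose to repair solenoidality with a Bogovskii-type corrector while simultaneously invoking a pressure $p_2\in L^r_tL^r_x$ for the strong solution so that the term $p_2\,\di\tilde v_2$ can be handled. Both steps are problematic under the standing hypotheses. The domain $\Omega_{\eta_1(t)}$ is not uniformly Lipschitz (the paper stresses this), so Bogovskii estimates with constants depending only on the energy bounds are not available off the shelf; and the assumed regularity of $(v_2,\eta_2)$ does \emph{not} deliver a pressure with the time-integrability you need --- the paper's own footnote recalls that even for Stokes in a fixed smooth domain the pressure is ill-behaved in time, and the whole point of the construction is to avoid it.

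The paper sidesteps both obstacles by using, instead of $\tilde v_2$, the Piola-type pushforward $\hat v_2:=\gamma J^{-1}(v_2\circ\psi)$ (Lemma~\ref{testf}), which is divergence-free \emph{by construction} and still has the correct trace $(0,\partial_t\eta_2)$ on the moving interface. The same device is built into the mollifier of Lemma~\ref{lem:molly}: the kernel $K(s,t,y,x)$ is exactly the matrix $\gamma J^{-1}$ for the map between $\Omega_{\eta(t)}$ and $\Omega_{\eta(s)}$, so $\phi_\delta$ is solenoidal without any Bogovskii correction. Consequently no pressure ever appears; one tests the $(v_2,\eta_2)$ equation with the (solenoidal) $w_{2,\delta}=\check v_{1,\delta}-v_{2,\delta}$, tests the $(v_1,\eta_1)$ equation with $(\hat v_{2,\delta},\partial_t\eta_{2,\delta})$, and adds the energy inequality for $(v_1,\eta_1)$. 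All change-of-variables errors ($R_1$--$R_4$ in the paper) then involve only $\gamma-1$, $\nabla\gamma$, $\partial_t\gamma$ and are controlled by $\|\eta\|_{H^2}+\|\partial_t\eta\|_{L^2}$ times norms of $v_2$ in $L^r_tW^{1,s}_x$ and $\partial_t v_2$ in $L^2_t\tilde W^{-1,r}_x$. Only at the very last line does the paper pass from $w_1=v_1-\hat v_2$ to $v_1-\tilde v_2$, using $\|\hat v_2-\tilde v_2\|\lesssim\|\eta\|_{H^2}$.

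Your treatment of the convective remainder is essentially the one in the paper (it reduces to $([\nabla\hat v_2]w_1,w_1)$ there), and your Serrin-index bookkeeping is correct. The gap is the solenoidality mechanism: replace the Bogovskii/pressure route by the Piola transform of Lemma~\ref{testf}, and read Lemma~\ref{lem:molly} as a mollification \emph{through} that transform rather than as a divergence correction.
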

In particular, the constant $C$ can be bounded a-priori in dependence of $\omega,T$, the assumed bounds on $v_2$ and the right hand side of the energy inequality~\eqref{energy} for both solutions.

\subsection{Analytical strategy \& technical novelties}
Usually for uniqueness (or stability estimates) one takes the difference of the two solutions or, in case of a hyperbolic evolution, its time-derivative as a test function. We wish to emphasize that due to the variable geometry {\em depending on the solution}, even uniqueness of strong solutions for longer times (provided they exist) does not follow in a straight forward manner. An additional difficulty regarding weak-strong uniqueness results is that the regularity of one solution is too low to be used as a test function. We follow the approaches developed in~\cite{Sta05,brenier2011weak,CheMuhNec17}.
 The idea is to resolve the difference of the systems tested by the difference of solutions into the energy inequality of the weak solution and terms containing a coupling where at least one function is sufficiently regular.

In order to make one fluid velocity a test function for the other equation we follow the methodology introduced in~\cite{Gui10} where a change of variables from one geometry to the other is introduced that conserves the solenoidality property. This {suffices} to circumvent the weak regularity properties of the pressure in case of incompressible fluids.\footnote{In unsteady incompressible problems the pressure is known to be hard to control w.r.t.\ the time variable even in the simplest case of Stokes equation in a fixed (smooth) geometry~\cite{KocSol02}.} 
What can not be circumvented is the {\em weak regularity of the time-derivative of the involved test-functions}. The {\em technical highlight} is a mollification-in-time operator that conserves solenoidality in the variable domains and the coupling of the boundary conditions. Moreover, it does not reduce the regularity (in space) significantly. The operator is introduced in Lemma~\ref{lem:molly}. A result that might be of independent interest is that this mollification can be used to show that all weak solutions do indeed have a distributional time derivative in a Bochner space involving negative Sobolev spaces (see Proposition~\ref{pro:time}). Finally, of further use in the future might be the estimates (especially on the convective term) which were necessary in order to stay with our assumptions that close to the Ladyzhenskaya-Prodi-Serrin conditions.
\subsection*{Acknowledgments}
\noindent
S.~Schwarzacher and M.~Sroczinski thank the support of the primus research programme PRIMUS/19/SCI/01 and the University Centre UNCE/SCI/023 of Charles University. Moreover they thank for the support of the program GJ19-11707Y of the Czech national grant agency (GA\v{C}R).


\section{Notation \& preliminary results}

\subsection{Simplifications}
In order to simplify the quite technical argument below we assume in the following that $\Lcal(\eta,\partial_t\eta)\equiv \Delta^2\eta$; as the argument can be adapted to more general $\Lcal$ in a straight forward manner. Moreover we will assume in the following that we have a fluid in $3D$. In particular we assume that $\omega\subset \R^2$. The adaption of the proof for $\omega\subset \R$ implies only simplifications and no further complications. Finally we set all constants in the equations to one (i.e.\ both densities, the thickness of the plate, the viscosity of the fluid).

 For vector valued functions $u:\Omega_\eta\to \R^3$ we use $u=(u',u^3)^T=(u^1,u^2,u^3)^T$.
The constants $c, c_1,...$ are used as a constants that are independent of $\eta$, while the constants $C, C_1,...$ are used as constants that may depend on bounded quantities of the deformations. Both letters $c,C$ may change there actual value with every instance. Moreover, we use the notation $a\sim b$, if there are constants $c, c_1$ 
such that $\abs{a}\leq c\abs{b}\leq c_1\abs{a}$.

\subsection{Identities \& Estimates}
We will use Reynold's transport theorem which for plates reads (using the fact that the third component of the outer normal times the Jacobian of the change of variables is one) as for all $u\in W^{1,1}(0,T;\Omega_{\eta})$ with $u'(x,\eta(x))=0$ for all $x$, we find 
\[
\partial_t \bigg(\int_{\Omega_\eta} u(t,z)\cdot \phi(t,z)\, dz\bigg)=\int_{\Omega_\eta} \partial_t(u\cdot \phi)\, dz +\int_\omega u^3(t,x,\eta(x)) \phi^3(t,x,\eta(x)))\partial_t\eta(t,x)\, dx,
\]
for all $\phi,\eta$ for which the above expression is well defined. 

Next due to the zero boundary conditions of $v'$ on $\partial\Omega$ we actually may use Korn's identity which is done throughout the paper.
\begin{lem}
\label{lem:korn}
Let $u\in H^1(\Omega_\eta)$ such that $u=0$ on $B_c$ and $u'(x,\eta(x))=0$, than
\[
\norm{u}_{H^1(\Omega_\eta)}\sim \norm{\nabla u}_{L^2(\Omega_\eta)} = 2\norm{\nablasym u}_{L^2(\Omega_\eta)}.
\]
\end{lem}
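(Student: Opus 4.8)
The statement splits into two essentially independent claims: the Poincar\'e-type equivalence $\norm{u}_{H^1(\Omega_\eta)}\sim\norm{\nabla u}_{L^2(\Omega_\eta)}$, and the Korn identity $\norm{\nabla u}_{L^2(\Omega_\eta)}^2=2\norm{\nablasym u}_{L^2(\Omega_\eta)}^2$. For the first, one inequality is trivial, and for the reverse I would exploit that $u$ vanishes on $\omega\times\{0\}\subset B_c$: for a.e.\ $x\in\omega$ the slice $u(x,\cdot)$ lies in $H^1(0,\eta(x))$ with $u(x,0)=0$, so $u(x,y)=\int_0^y\partial_y u(x,s)\,ds$ and $\abs{u(x,y)}^2\le\eta(x)\int_0^{\eta(x)}\abs{\partial_y u(x,s)}^2\,ds$; integrating over $\Omega_\eta$ gives $\norm{u}_{L^2(\Omega_\eta)}\le\norm{\eta}_{L^\infty(\omega)}\norm{\nabla u}_{L^2(\Omega_\eta)}$, a bound whose constant depends only on $\sup_\omega\eta$, hence is controlled by the energy estimate.

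For the Korn identity, expand $2\nablasym u=\nabla u+(\nabla u)^T$ to get, with summation over repeated indices,
\[
4\norm{\nablasym u}_{L^2(\Omega_\eta)}^2=2\norm{\nabla u}_{L^2(\Omega_\eta)}^2+2\int_{\Omega_\eta}\partial_i u^j\,\partial_j u^i\,dz ,
\]
so everything reduces to showing that the cross term $I:=\int_{\Omega_\eta}\partial_i u^j\,\partial_j u^i\,dz$ vanishes. Integrating by parts once (for smooth $u$, extending by density afterwards) turns $I$ into $-\int_{\Omega_\eta}u^j\,\partial_j(\di u)\,dz+\int_{\partial\Omega_\eta}\big((u\cdot\nabla)u\big)\cdot n\,dS$; the interior term is zero since $\di u=0$, and on $B_c$ the boundary integrand vanishes because $u=0$ there. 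Hence only the contribution of the graph $\Gamma_\eta=\{(x,\eta(x)):x\in\omega\}$ survives.

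The interface term is the part I expect to be the main obstacle, since it does \emph{not} vanish from the boundary conditions by themselves; one has to combine the tangential no-slip constraint with solenoidality and the explicit geometry of the graph. Using that $n\,dS=(-\nabla\eta,1)\,dx$ on $\Gamma_\eta$ (the feature already highlighted in the excerpt) and that $u'(x,\eta(x))=0$, so that $(u\cdot\nabla)u=u^3\partial_y u$ along $\Gamma_\eta$, the remaining term equals $\int_\omega u^3(x,\eta(x))\,\big(\partial_y u^3-\nabla\eta\cdot\partial_y u'\big)(x,\eta(x))\,dx$. Differentiating the constraints $u^k(\cdot,\eta(\cdot))\equiv0$ ($k=1,2$) in $x_k$ and summing yields $\nabla\eta\cdot\partial_y u'=-(\partial_1 u^1+\partial_2 u^2)=\partial_y u^3$ on $\Gamma_\eta$, the last equality being once more $\di u=0$; thus the integrand vanishes pointwise, $I=0$, and the identity follows. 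The one technical point to be careful about, besides this computation, is that the integration by parts and the boundary traces must be set up first for smooth $u$ and smooth $\eta$ (where $\Omega_\eta$ is smooth) and then transferred to the stated class by approximation, controlling everything through $\norm{\eta}_{L^\infty(\omega)}$ and $\norm{\eta}_{H^2(\omega)}$ only — this step is genuinely needed because under the standing hypotheses $\Omega_\eta$ need not even be Lipschitz.
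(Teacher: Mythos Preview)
Your proof is correct and matches the paper's approach: for the Poincar\'e part the paper likewise invokes the fundamental theorem of calculus in the vertical variable, and for Korn's identity the paper simply cites \cite[Lemma~4.1]{MuhSch19}, whereas you carry out the integration-by-parts computation that this reference presumably contains. One remark: you use $\di u=0$ twice in the boundary computation, and this hypothesis is not listed in the lemma (though it holds in every application); without it the cross term evaluates to $\int_{\Omega_\eta}(\di u)^2$ rather than zero, so the stated equality genuinely needs solenoidality---this is a missing hypothesis in the lemma, not a gap in your argument.
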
 
\begin{proof}
The fact that $\norm{u}_{H^1(\Omega_\eta)}\sim \norm{\nabla u}_{L^2(\Omega_\eta)}$ follows by Poincar\'{e}'s inequality as all components have zero boundary values on large parts of the boundary and the inequality is a straight consequence of the fundamental theorem of calculus. Korn's identity follows by~\cite[Lemma 4.1]{MuhSch19}.
\end{proof}
Our proof makes use of the following additional regularity result that has been shown in~\cite[Theorem~1.2]{MuhSch19}:
\begin{Theorem}
\label{thm:boris}
For any weak solution to \textit{FSI} we find that as long as $\eta>0$ in $[0,T]\times {\omega}$ that $\eta\in L^2(0,T;H^{2+\sigma}({\omega}))$ and $\partial_t\eta\in L^2(0,T;H^{\sigma}({\omega}))$ for all $\sigma<\frac{1}{2}$.
\end{Theorem}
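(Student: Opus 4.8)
The plan is to bootstrap from the energy inequality~\eqref{energy}, which already supplies $\eta\in L^\infty(0,T;H^2(\omega))$, $\partial_t\eta\in L^\infty(0,T;L^2(\omega))$ and $v\in L^\infty(0,T;L^2(\Omega_\eta(t)))\cap L^2(0,T;H^1(\Omega_\eta(t)))$; the hypothesis $\min_{[0,T]\times\omega}\eta>0$ is what keeps the geometric constants appearing below under control in terms of these quantities and of $\delta$. I would prove the two assertions in turn, the bound on $\eta$ relying on the one for $\partial_t\eta$.

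For $\partial_t\eta\in L^2(0,T;H^\sigma(\omega))$ I would argue purely by a trace estimate through the kinematic coupling~\eqref{eq:4}: for a.e.\ $t$ the function $x\mapsto\partial_t\eta(t,x)$ is, via $\psi$, the normal component of $v(t,\cdot)$ read on the graph of $\eta(t,\cdot)$. Since $\omega\subset\R^2$ one has $H^2(\omega)\hookrightarrow W^{1,p}(\omega)$ for all $p<\infty$ and $H^2(\omega)\hookrightarrow C^{0,\alpha}(\overline\omega)$ for all $\alpha<1$, so $\Omega_\eta(t)$ is, uniformly in $t$, an ``almost Lipschitz'' graph domain; the trace operator of its upper boundary, composed with $\psi^{-1}$, then maps $H^1(\Omega_\eta(t))$ into $H^\sigma(\omega)$ for every $\sigma<\tfrac12$ with a constant controlled by $\norm{\eta}_{L^\infty(0,T;H^2)}$ and $\delta$, the endpoint $\sigma=\tfrac12$ being unavailable precisely because the boundary is not uniformly Lipschitz. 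Squaring and integrating against $\norm{v(t)}_{H^1(\Omega_\eta(t))}^2\in L^1(0,T)$ concludes this part.

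For $\eta\in L^2(0,T;H^{2+\sigma}(\omega))$ I would go through the (simplified) plate equation $\partial_t^2\eta+\Delta^2\eta=\mathcal F(v,p,\eta)+g$. By elliptic regularity for the clamped bilaplacian with~\eqref{eq:6} on the Lipschitz domain $\omega$ it is enough to bound $\Delta^2\eta$ in $L^2(0,T;H^{\sigma-2}(\omega))$; the term $g$ is harmless, so the whole issue is the fluid stress, whose pressure part is merely a negative-order-in-time distribution and hence cannot be estimated as a boundary functional. The remedy is never to isolate $\mathcal F$: instead I would test the \emph{coupled} weak formulation~\eqref{eq} with $\xi$ a spatial regularization of $\eta$ --- a spectral power $(-\Delta)^\sigma\eta$, or tangential difference quotients near $\partial\omega$, adapted to respect~\eqref{eq:6} --- paired with a \emph{solenoidal} $H^1$-extension $w$ of $(0,\xi)^T\circ\psi^{-1}$ into $\Omega_\eta(t)$ that vanishes on $B_c$. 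Because $w$ is divergence free the pressure drops out of~\eqref{eq}, so the forcing term $\skp{\Delta^2\eta}{\xi}$, which equals $\norm{\eta}_{\dot H^{2+\sigma}}^2$ for this choice, is matched against $\int_{\Omega_\eta}\nablasym v:\nablasym w$, $\int_{\Omega_\eta}(v\otimes v):\nabla w$, $\int_{\Omega_\eta}v\cdot\partial_t w$ and the plate inertia; integrating over $(0,T)$ these are estimated using $v\in L^2H^1\cap L^\infty L^2$ (Ladyzhenskaya-type interpolation for the convective term in $3D$), the bound $\partial_t\eta\in L^2H^\sigma$ from the first step (which is exactly what makes $\int_{\Omega_\eta}v\cdot\partial_t w$ finite), and the mapping properties $\norm{w}_{H^1}\lesssim\norm{\xi}_{H^1}$, $\norm{\partial_t w}_{L^2}\lesssim\norm{\partial_t\xi}_{H^{-\sigma}}$ of the extension, all finite precisely in the range $\sigma<\tfrac12$. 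Passing to the limit in the regularization then yields $\Delta^2\eta\in L^2(0,T;H^{\sigma-2})$. Since the $\partial_t^2\eta$ contribution never becomes a clean total time-derivative, one only extracts an $L^2$-in-time --- not an $L^\infty$-in-time --- bound, in agreement with the statement.

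The hard part is the construction and quantitative control of the solenoidal extension $w$ --- and of $\partial_t w$, into which the motion of $\Omega_\eta(t)$ (i.e.\ $\partial_t\psi$) enters --- on the time-dependent, only $C^{0,\alpha}$-regular domain: it must lose fewer than $\tfrac12$ spatial derivatives, annihilate the pressure, and possess an $L^2$-in-time first time-derivative compatible with $\int_{\Omega_\eta}v\cdot\partial_t w$; and, the plate equation being hyperbolic, it must be combined with a mollification in time that preserves all of these properties. This is exactly the technology packaged in Lemma~\ref{lem:molly}; everything else reduces to bookkeeping with Korn's identity (Lemma~\ref{lem:korn}), Reynolds' transport theorem and interpolation.
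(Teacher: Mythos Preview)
The paper does not prove Theorem~\ref{thm:boris}; it is quoted from~\cite[Theorem~1.2]{MuhSch19} as an input to the weak-strong uniqueness argument, so there is no in-paper proof to compare against.

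Your two-step strategy is nonetheless the right one. The trace argument for $\partial_t\eta\in L^2(0,T;H^\sigma(\omega))$ via the kinematic coupling is exactly what is used (the paper itself invokes~\cite[Lemma~6]{BreSch18} for precisely this at the start of Section~3), and testing the coupled formulation with a pair $(\text{solenoidal extension of }\xi,\ \xi)$ so as never to see the pressure is indeed the mechanism behind the $H^{2+\sigma}$ bound in~\cite{MuhSch19}.

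One correction, though: Lemma~\ref{lem:molly} is not the technology that handles the hard part you identify. That lemma takes a function \emph{already defined} on the moving domain and mollifies it in time while preserving solenoidality and the coupling trace; it does not construct a solenoidal extension of a prescribed boundary datum $\xi$ into $\Omega_\eta(t)$. The extension operator you need --- sending $\xi$ to a divergence-free $w\in H^1(\Omega_\eta(t))$ with $w(x,\eta(t,x))=(0,\xi(x))$ and $w=0$ on $B_c$, losing fewer than $\tfrac12$ derivatives on the merely $C^{0,\alpha}$ boundary and admitting a usable $\partial_t w$ --- is a separate, substantial construction carried out in~\cite{MuhSch19} and is not packaged anywhere in the present paper (the authors even remark, just before Lemma~\ref{lem:molly}, that the mollification of~\cite{MuhSch19} was unsuitable for their purposes and a new one had to be built). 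So your diagnosis that the extension is the hard part is correct, but ``everything else reduces to bookkeeping'' holds only after importing that extension from~\cite{MuhSch19}, not from Lemma~\ref{lem:molly}.
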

An adaption of \cite[Theorem~1.2]{MuhSch19} is the following corollary. 

We will need the following interpolation estimate:
\begin{enumerate}
\item $L^{a'}(L^a)\subset L^\infty(L^1)\cap L^2(L^2)$ for all $a\in [1,2]$.
\end{enumerate}
\begin{lem}
\label{lem:interpol}
 For $Y\subset\mathbb{R}^2$. If $b\in L^\infty(0,T;L^2(Y))$ and $\phi\in L^2(0,T;W^{1,a}(Y))$ for all $a\in (1,2)$, then $\abs{b}\abs{\phi}\in L^2(0,T;L^p(Y))$ for all $p\in (1,2)$.
 \end{lem}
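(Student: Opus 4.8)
The plan is to prove Lemma~\ref{lem:interpol} as a direct consequence of Gagliardo--Nirenberg interpolation on $Y\subset\mathbb{R}^2$ combined with H\"older's inequality in time. First I would fix $p\in(1,2)$ and choose an exponent $q=q(p)\in(2,\infty)$ so that $\frac1p=\frac12+\frac1q$; then by the pointwise (in time) H\"older inequality on $Y$,
\[
\norm{\,\abs{b(t)}\,\abs{\phi(t)}\,}_{L^p(Y)}\le \norm{b(t)}_{L^2(Y)}\,\norm{\phi(t)}_{L^q(Y)}.
\]
Since $b\in L^\infty(0,T;L^2(Y))$, it remains to bound $\norm{\phi(t)}_{L^q(Y)}$ in $L^2(0,T)$. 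For this I would invoke the Gagliardo--Nirenberg inequality in two space dimensions: for a suitable $a=a(q)\in(1,2)$ one has $W^{1,a}(Y)\hookrightarrow L^q(Y)$ when $\frac1a-\frac12\le\frac1q$ (strict Sobolev embedding; one may pick $a$ with $\frac1q=\frac1a-\frac12$, which lies in $(1,2)$ exactly because $q\in(2,\infty)$), so that $\norm{\phi(t)}_{L^q(Y)}\le c\,\norm{\phi(t)}_{W^{1,a}(Y)}$. Integrating the square in time and using $\phi\in L^2(0,T;W^{1,a}(Y))$ for this particular $a$ then yields $\abs{b}\abs{\phi}\in L^2(0,T;L^p(Y))$, with the quantitative bound
\[
\norm{\,\abs{b}\,\abs{\phi}\,}_{L^2(0,T;L^p(Y))}\le c\,\norm{b}_{L^\infty(0,T;L^2(Y))}\,\norm{\phi}_{L^2(0,T;W^{1,a}(Y))}.
\]

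I would make the choice of exponents explicit once: given $p\in(1,2)$ set $q=\frac{2p}{2-p}\in(2,\infty)$ and $a=\frac{2q}{q+2}=\frac{2p}{?}$ — more simply, note $\frac1a=\frac1q+\frac12$ forces $a\in(1,2)$ — and record that $Y$ having finite measure lets us absorb any borderline loss, so that the non-sharp embedding $W^{1,a}(Y)\hookrightarrow L^q(Y)$ suffices and no boundary regularity of $Y$ beyond what is implicit in the statement is needed (an extension operator, or a density argument, handles generic $Y\subset\mathbb{R}^2$; if $Y$ is assumed Lipschitz this is standard). The logical structure is: (i) H\"older in $Y$; (ii) Sobolev embedding in $Y$ with the matched exponent $a$; (iii) H\"older/integration in $t$; (iv) combine.

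The only mild subtlety — and the closest thing to an obstacle — is bookkeeping the triple of exponents $(p,q,a)$ so that all three constraints $\frac1p=\frac12+\frac1q$, $W^{1,a}\hookrightarrow L^q$, and $a\in(1,2)$ are simultaneously satisfiable for every $p\in(1,2)$; this is where the two-dimensionality of $Y$ is essential, since in $\mathbb{R}^2$ the Sobolev conjugate of $a<2$ is any $q<\infty$, giving enough room. A second small point is that the hypothesis on $\phi$ is ``for all $a\in(1,2)$,'' so one simply selects the single value of $a$ produced by the computation; no uniformity in $a$ is required. Everything else is routine and I would not write it out in detail.
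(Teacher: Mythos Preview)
Your argument is correct and is precisely the approach the paper has in mind: the paper's proof is the single line ``The result follows by Sobolev embedding and H\"older's inequality,'' and your write-up just makes the exponent bookkeeping explicit. The only cosmetic remark is that you never actually use Gagliardo--Nirenberg interpolation, only the 2D Sobolev embedding $W^{1,a}(Y)\hookrightarrow L^q(Y)$ with $\frac{1}{a}=\frac{1}{q}+\frac{1}{2}$, so you may as well call it that.
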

\begin{proof}
The result follows by Sobolev embedding and H\"older's inequality.
\end{proof}

Very often we will have the product of a function defined on ${\omega}$ with a function defined on $\Omega_{\eta}$. We will integrate such products over $\Omega_\eta$ where one of the two functions is than constant in the variable direction. In some cases this allows to improve the regularity. In particular we will need the following extra information on the weak solution that will be used upon the convective term:
\begin{lem}
\label{lem:convective}
Let $(\eta,v)$ be a weak solution to FSI. 
Then we find that
$
\int_{0}^{\eta (t,x)}\abs{v }\, dy\in L^2(0,T;H^1({\omega}))
$
\[
\normlr{\int_{0}^{\eta (t,x)}\abs{v }\, dy}_{L^2(0,T;H^1({\omega}))}\leq c\norm{v}_{L^2(0,T;H^1(\Omega_\eta))}\norm{\eta}_{L^\infty(0,T;H^2(\omega))}
\]
and
$
\int_{0}^{\eta (t,x)}\abs{v }^2\, dy\in L^2(0,T;W^{1,1}({\omega}))
$
\begin{align*}
\normlr{\int_{0}^{\eta (t,x)}\abs{v(t) }^2\, dy}_{L^2(0,T;W^{1,1}({\omega}))}&\leq
\|v\|_{L^2(0,T;L^2(\Omega_\eta))}+2\norm{v}_{L^\infty(0,T;L^2(\Omega_\eta))}\norm{\nabla v}_{L^2([0,T]\times \omega)}\\
& \quad+\|\partial_t \eta\|_{L^\infty(0,T;L^2(\omega))}^2\|\nabla \eta\|_{L^1(0,T;L^\infty(\omega))}.
\end{align*}
This implies in particular that $\int_{0}^{\eta (t,x)}\abs{v }^2\, dy\in L^2([0,T]\times {\omega})$.
\end{lem}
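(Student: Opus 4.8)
The plan is to estimate the two vertical integrals separately, in both cases by writing the spatial $\omega$-derivatives explicitly using the Leibniz rule (the upper limit of integration depends on $x$) and then controlling each resulting term by the energy quantities of the weak solution together with the $H^2$-bound on $\eta$ coming from $\mathcal{V}_K$.

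\emph{Step 1: the $L^2(0,T;H^1(\omega))$-bound for $F(t,x):=\int_0^{\eta(t,x)}\abs{v}\,dy$.} First, pointwise in $(t,x)$ one has $\abs{F(t,x)}\le \eta(t,x)^{1/2}\big(\int_0^{\eta(t,x)}\abs{v}^2\,dy\big)^{1/2}$ by Cauchy--Schwarz, so $\norm{F(t)}_{L^2(\omega)}\le \norm{\eta(t)}_{L^\infty(\omega)}^{1/2}\norm{v(t)}_{L^2(\Omega_{\eta(t)})}$; integrating in time and using $H^2(\omega)\hookrightarrow L^\infty(\omega)$ gives the $L^2$-part. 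For the gradient, differentiate under the integral sign:
\[
\nabla_x F(t,x)=\int_0^{\eta(t,x)}\nabla_x\abs{v}\,dy+\abs{v(t,x,\eta(t,x))}\,\nabla\eta(t,x).
\]
The first term is bounded in $L^2(\omega)$ by $\norm{\eta(t)}_{L^\infty}^{1/2}\norm{\nabla v(t)}_{L^2(\Omega_{\eta(t)})}$ as before (using $\abs{\nabla_x\abs{v}}\le\abs{\nabla v}$). The boundary term is handled by the kinematic coupling condition~\eqref{eq:4}, which gives $v'(t,x,\eta(t,x))=0$ and $v^3(t,x,\eta(t,x))=\partial_t\eta(t,x)$, hence $\abs{v(t,x,\eta(t,x))}=\abs{\partial_t\eta(t,x)}$; then $\norm{\abs{\partial_t\eta}\,\nabla\eta}_{L^2(\omega)}\le \norm{\partial_t\eta}_{L^2(\omega)}\norm{\nabla\eta}_{L^\infty(\omega)}\lesssim\norm{\partial_t\eta}_{L^2(\omega)}\norm{\eta}_{H^2(\omega)}$, again by Sobolev embedding. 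Taking the $L^2$-norm in time and using $\partial_t\eta\in L^\infty(0,T;L^2(\omega))$, $\eta\in L^\infty(0,T;H^2(\omega))$ from the energy estimate yields the claimed bound.

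\emph{Step 2: the $L^2(0,T;W^{1,1}(\omega))$-bound for $G(t,x):=\int_0^{\eta(t,x)}\abs{v}^2\,dy$.} The $L^1(\omega)$-bound of $G$ is just $\norm{G(t)}_{L^1(\omega)}=\norm{v(t)}_{L^2(\Omega_{\eta(t)})}^2$, which is in $L^\infty(0,T)\subset L^2(0,T)$. For the gradient, Leibniz again gives
\[
\nabla_x G(t,x)=\int_0^{\eta(t,x)}\nabla_x\abs{v}^2\,dy+\abs{v(t,x,\eta(t,x))}^2\,\nabla\eta(t,x)
= 2\int_0^{\eta}\abs{v}\abs{\nabla v}\,dy+\abs{\partial_t\eta}^2\nabla\eta,
\]
using $\abs{\nabla\abs{v}^2}\le 2\abs{v}\abs{\nabla v}$ and the boundary identity from~\eqref{eq:4} as in Step~1. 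The first term is estimated in $L^1(\omega)$ by Cauchy--Schwarz in $(x,y)$: $2\int_\omega\int_0^\eta\abs{v}\abs{\nabla v}\,dy\,dx\le 2\norm{v(t)}_{L^2(\Omega_{\eta(t)})}\norm{\nabla v(t)}_{L^2(\Omega_{\eta(t)})}$; taking $L^2$ in time and splitting $\norm{v}_{L^\infty_tL^2_x}\cdot\norm{\nabla v}_{L^2_{t,x}}$ produces the second summand on the right-hand side. (I also add the crude term $\norm{v}_{L^2(L^2)}$ to absorb lower-order contributions, matching the stated inequality.) The boundary term gives $\norm{\abs{\partial_t\eta}^2\nabla\eta}_{L^1(\omega)}\le\norm{\partial_t\eta}_{L^2(\omega)}^2\norm{\nabla\eta}_{L^\infty(\omega)}$, and taking $L^2$ in time bounds it by $\norm{\partial_t\eta}_{L^\infty(0,T;L^2(\omega))}^2\norm{\nabla\eta}_{L^1(0,T;L^\infty(\omega))}$. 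Finally, since $W^{1,1}(\omega)\hookrightarrow L^2(\omega)$ for $\omega\subset\R^2$, the last statement $G\in L^2([0,T]\times\omega)$ follows immediately.

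\emph{Main obstacle.} The only genuinely delicate point is the rigorous justification of differentiation under the integral sign when the upper limit $\eta(t,x)$ is merely $H^2(\omega)$ in $x$ and $v$ is merely $H^1(\Omega_\eta)$: the trace $v(t,x,\eta(t,x))$ and the product $\abs{v(t,x,\eta(t,x))}^2\nabla\eta$ must be shown to lie in the right spaces and to represent the boundary term of the distributional derivative. This is resolved by approximation — take smooth $v^{(k)}\to v$ in $H^1(\Omega_{\eta})$ and smooth $\eta^{(k)}\to\eta$ in $H^2(\omega)$, for which the Leibniz formula is classical, use the continuity of the trace operator onto the graph (uniform in the Lipschitz bound on $\eta$, which is controlled since $\eta\in H^2(\omega)\hookrightarrow C^1(\omega)$) together with $H^2\hookrightarrow L^\infty$ and $H^1\hookrightarrow L^4$ in the relevant 2D slices to pass to the limit in each term, and note that all the estimates above are stable under this limit. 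Everything else is Cauchy--Schwarz, Hölder, and Sobolev embedding.
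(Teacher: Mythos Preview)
Your overall strategy---Leibniz differentiation with a variable upper limit, identification of the boundary term via the kinematic coupling $v(t,x,\eta(t,x))=(0,\partial_t\eta)^T$, then H\"older/Cauchy--Schwarz on each piece---is exactly what the paper does, and your Step~2 matches the paper's argument essentially line for line.

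There is, however, a real gap in Step~1 (and the same error reappears in your ``main obstacle'' paragraph). You assert $\|\nabla\eta\|_{L^\infty(\omega)}\lesssim\|\eta\|_{H^2(\omega)}$ and $H^2(\omega)\hookrightarrow C^1(\omega)$, but for $\omega\subset\R^2$ these embeddings \emph{fail}: one only has $H^2(\omega)\hookrightarrow W^{1,q}(\omega)$ for $q<\infty$, not $q=\infty$. Consequently your bound $\|\,|\partial_t\eta|\,\nabla\eta\,\|_{L^2(\omega)}\le\|\partial_t\eta\|_{L^2}\|\nabla\eta\|_{L^\infty}$ cannot be closed in terms of $\|\eta\|_{H^2}$. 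The paper's remedy is to shift integrability onto the other factor: since $\partial_t\eta$ is the trace of $v$ on the moving interface, the trace theorem for these variable domains \cite[Lemma~6]{BreSch18} gives $\|\partial_t\eta(t)\|_{L^3(\omega)}\le c\|v(t)\|_{H^1(\Omega_{\eta(t)})}$, and this is paired with the \emph{valid} embedding $\|\nabla\eta\|_{L^6(\omega)}\le c\|\eta\|_{H^2(\omega)}$ via H\"older ($\tfrac13+\tfrac16=\tfrac12$). That is precisely what produces the stated right-hand side $c\|v\|_{L^2(H^1)}\|\eta\|_{L^\infty(H^2)}$; without the trace-theorem input you cannot get the boundary term into $L^2([0,T]\times\omega)$ from $\partial_t\eta\in L^\infty(L^2)$ and $\eta\in L^\infty(H^2)$ alone. (Note also that the domain $\Omega_\eta$ is \emph{not} uniformly Lipschitz in general, so the approximation argument you sketch needs the non-smooth trace theory anyway.)
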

\begin{proof}
For the first statement we calculate
\[
	\nabla_x \int_{0}^{\eta (t,x)}\abs{v(x,y)}\, dy= \int_{0}^{\eta (t,x)}\nabla_x \abs{v(x,y)}\, dy+\nabla_x \eta(t,x)\abs{\partial_t \eta}
	\]
which is uniformly bounded in $L^2([0,T]\times \omega)$ since $v_1\in L^2(0,T; H^1(\Omega_\eta)$, $\partial_t\eta\in L^2(0,T;L^3(\omega))$ and $\nabla \eta\in L^\infty(0,T;L^6(\omega))$. The estimate follows using Sobolev embedding and the trace theorem~\cite[Lemma~6]{BreSch18}.

\seb{For the second statement we calculate
 \begin{align*}
 \nabla_x\int_{0}^{\eta (t,x)}\abs{v }^2\, dy&=
\int_{0}^{\eta (t,x)}2[\nabla v ]v \, dy+\abs{v (\eta(t,x))}^2\nabla \eta (t,x)
\\
&=
\int_{0}^{\eta (t,x)}2[\nabla v ]v \, dy+\abs{\partial_t\eta (t,x)}^2\nabla \eta (t,x)=:I_1+I_2
 \end{align*}
Due to Holeder's inequality 
$$\int_\omega I_1 \le 2\|v\|_{L^2(\Omega_\eta)}\|\nabla v\|_{L^2(\Omega_\eta)}.$$
And it is also straightforward to see
$$\int_\omega I_2 \le \|\partial_t \eta\|_{L^2(\omega)}^2\|\nabla \eta\|_{L^\infty(\omega)}.$$
Thus the statement follows 
since $v \in L^\infty(0,T;L^2(\Omega_{\eta }))\cap L^2(0,T;H^{1}(\Omega_{\eta }))$, $\partial_t \eta \in L^\infty(0,T,L^2(\omega))$ and by Theorem \ref{thm:boris} $\eta \in L^2(0,T;H^{2+\sigma}(\omega)) \hookrightarrow L^2(0,T;W^{1,\infty}(\omega))$ for all $\sigma >0$.}
\end{proof}
%

\subsection{Convolution}
Since the regularity in space of $\partial_t\eta$ and the regularity in time for $v$ a a test function is formally not sufficient to use the couple as a test function we have to introduce a mollification in time. Unfortunately, it was not possible to use the mollification introduced~\cite{MuhSch19} and we have to introduce a new version. Already here the \textit{regularity of the deformation influences the regularity of the mollification sensitively} due to the fact that a change of variables will be a part of the convolution kernel.

First a technical Lemma. Here we will use a mollifier with respect to time. As is the standard procedure, choose a function $j \in C_0^\infty(\R)$ which is positive, even, has support in $(-1,1)$ and satifies $\int_\R j ~dt=1$, $\frac{d}{dt}j(-t) \ge 0$, $\frac{d}{dt}j(t) \le 0$ for $t \ge 0$. For $\delta>0$ define $j_\delta(t) \equiv \delta^{-1}j(t/\delta)$. Then $j_\delta$ has support in $(-\delta, \delta)$ and otherwise the same properties as $j$.

Let $(H,(\cdot,\cdot))$ be a Hilbert space, $T>0$. Let $u \in L^\infty(0,T;H)$ be continues w.r.t. the weak topology on $H$ and assume that the limits $u(0):=\lim_{t\to 0} u(t)$, $u(T):=\lim_{t \to T} u(t)$ exist in the weak topology of $H$. In the following we will call the space of all such functions $C_{w}(0,T;H)$. Define the extension $\bar{u} \in L^\infty(\R,H)$ by
\begin{align}
\label{extend}
\bar{u}^T(t)=\begin{cases}
u(t), & t \in (0,T),\\
u(0), & t \in (-\infty,0],\\
u(T), & t \in [T,\infty].
\end{cases}
\end{align}
Now for all $\delta>0$, $t \in [0,T]$ set
$$u_\delta^T(t)=\int_\R j_\delta(\tau-s)\bar{u}^T(s)ds.$$
It is well known that $u_\delta^T \in C^\infty([0,T],H)$ and $\lim_{\delta \to 0} u_\delta=u$ in $L^p(0,T;H)$ for all $1 \le p < \infty$. Furthermore the following holds

\begin{lem}
	\label{lemma}
	Let $u,v \in \seb{C_w}(0,T;H)$ and $t \in (0,T]$. Then for all $t \in [0,T]$
	\begin{equation}
	\label{eq1}
	\lim_{\delta\to 0}\int_0^t(u,v^T_\delta)-(u_\delta^T,v)d\tau=0
	\end{equation}
	and 
	$$
	\lim_{\delta \to 0} \int_0^T\left(u,\frac{d}{dt}v_\delta^T\right)+\left(\frac{d}{dt}u_\delta^T,v\right)d\tau=(u(T),v(T))-(u(0),v(0))
	$$
\end{lem}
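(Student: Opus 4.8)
The statement is a standard pair of facts about the symmetrized action of the Friedrichs mollifier in time, and I would prove them exactly as in the classical Leray-Hopf arguments (cf.\ the presentations in the references on Navier-Stokes), being careful about the endpoints $0$ and $T$ where the extension $\bar u^T$ is constant. The first identity \eqref{eq1} I would prove by writing out the double integral, using Fubini, and exploiting the symmetry of $j_\delta$. Concretely,
\[
\int_0^t (u,v^T_\delta)\,d\tau = \int_0^t\!\!\int_\R j_\delta(\tau-s)\,(u(\tau),\bar v^T(s))\,ds\,d\tau,
\]
and similarly for $(u^T_\delta,v)$. Since $u,v\in C_w(0,T;H)$ are bounded in $H$, all integrands are bounded, so Fubini applies. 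The key point is that if $t$ were replaced by $T$, the two double integrals would be literally equal after the change of variables $(\tau,s)\mapsto(s,\tau)$ and using $j_\delta(\tau-s)=j_\delta(s-\tau)$ — so the difference is an integral over the symmetric difference of the regions $\{0<\tau<t\}$ and $\{0<s<t\}$ in the $(\tau,s)$-plane, intersected with a $\delta$-neighborhood of the diagonal. That region has measure $O(\delta)$ (it is contained in $\{|\tau-t|<\delta\}\cup\{|s-t|<\delta\}$ near the corner), and the integrand is uniformly bounded by $\|j_\delta\|_{L^1}\sup\|u\|\sup\|v\| / \delta \cdot (\text{something})$ — more precisely the contribution is bounded by $C\sup_{|\tau-t|\le\delta}|(u(\tau)-u(t),\text{avg of }v)|+\dots$, which tends to $0$ by the weak continuity of $u$ and $v$ at $t$. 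I would carry this out cleanly by the substitution trick: after relabeling, $\int_0^t(u_\delta^T,v)\,d\tau=\int_0^T\!\!\int_\R j_\delta(\tau-s)\mathbf 1_{[0,t]}(s)(u(s),\bar v^T(\tau))\,ds\,d\tau$ (again using evenness of $j_\delta$ and Fubini), so the difference becomes $\int\!\!\int j_\delta(\tau-s)(\mathbf 1_{[0,t]}(\tau)-\mathbf 1_{[0,t]}(s))(u(\cdot),\bar v^T(\cdot))\,ds\,d\tau$, whose support forces $\tau,s$ within $\delta$ of $t$, giving an $O(\delta)$-measure domain and hence a vanishing limit.

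\textbf{Second identity.} For the derivative identity I would first note $\frac{d}{dt}v_\delta^T(\tau)=\int_\R j_\delta'(\tau-s)\bar v^T(s)\,ds=-\int_\R \frac{d}{ds}\big(j_\delta(\tau-s)\big)\bar v^T(s)\,ds$, so that $\frac{d}{dt}v_\delta^T = (\bar v^T)'_\delta$ in the sense that it equals the mollification of the distributional derivative of $\bar v^T$; since $\bar v^T$ is constant on $(-\infty,0)$ and on $(T,\infty)$, that distributional derivative is supported in $[0,T]$ (with possible atoms at $0$ and $T$ if $v(0^+)\ne v(0)$, but by construction $\bar v^T$ is continuous there in the weak sense, so no atoms). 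The plan is then: write
\[
\int_0^T\Big[\big(u,\tfrac{d}{dt}v_\delta^T\big)+\big(\tfrac{d}{dt}u_\delta^T,v\big)\Big]d\tau,
\]
replace $u$ by $u^T_\delta$ in the first term and $v$ by $v^T_\delta$ in the second term at the cost of an error that vanishes as $\delta\to0$ — this error is controlled by the first identity \eqref{eq1} applied with one slot mollified (or directly by the same symmetric-difference estimate, since $\frac{d}{dt}v_\delta^T$ is bounded in $H$ uniformly in... no: it is $O(1/\delta)$, so one must be slightly more careful and instead integrate by parts first). I think the cleanest route is: integrate by parts in $\tau$ in $\int_0^T(u^T_\delta, \frac{d}{dt}v^T_\delta)\,d\tau = (u^T_\delta(T),v^T_\delta(T))-(u^T_\delta(0),v^T_\delta(0)) - \int_0^T(\frac{d}{dt}u^T_\delta,v^T_\delta)\,d\tau$, so that
\[
\int_0^T\Big[\big(u^T_\delta,\tfrac{d}{dt}v^T_\delta\big)+\big(\tfrac{d}{dt}u^T_\delta,v^T_\delta\big)\Big]d\tau=(u^T_\delta(T),v^T_\delta(T))-(u^T_\delta(0),v^T_\delta(0)).
\]
As $\delta\to0$, $u^T_\delta(T)\to u(T)$ and $v^T_\delta(T)\to v(T)$ strongly in $H$? — only weakly in general; but the product $(u^T_\delta(T),v^T_\delta(T))$ still converges to $(u(T),v(T))$ because at the endpoint $T$ the mollification from the right side only sees the constant value $u(T)$, so in fact $u^T_\delta(T)=\int_{-\delta}^{0}j_\delta(-s)u(T+s)\,ds + \int_0^\delta j_\delta(-s)u(T)\,ds$; as $\delta\to 0$ both $u^T_\delta(T)$ and $v^T_\delta(T)$ converge weakly to $u(T)$, $v(T)$, which is not enough for the product — so instead I would use $(u^T_\delta(T),v^T_\delta(T))-(u(T),v(T))=(u^T_\delta(T)-u(T),v^T_\delta(T))+(u(T),v^T_\delta(T)-v(T))$ and note that $v^T_\delta(T)$ is bounded, $u^T_\delta(T)-u(T)\rightharpoonup 0$ and is paired against the \emph{fixed} weak limit... still not quite. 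The genuinely correct observation is that at an endpoint the symmetric half of the mollifier meets only the constant extension, and by the monotonicity properties assumed of $j$ ($j$ even, so $\int_0^\delta j_\delta = \frac12$) one gets $u^T_\delta(T)\to \frac12 u(T)+\frac12 u(T)=u(T)$ \emph{strongly} is false in general — but one does not need strong convergence: the error terms reduce to $\int_0^t(u^T_\delta,v)-(u^T_\delta, v)\dots$; I would instead simply keep the terms $(u,\frac{d}{dt}v^T_\delta)$ unsymmetrized, integrate by parts to transfer the derivative to $u$, and then invoke \eqref{eq1} to symmetrize — this avoids ever needing products of two only-weakly-convergent sequences. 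This bookkeeping is the one mildly delicate point.

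\textbf{Main obstacle.} The only real subtlety — and the part I would write out most carefully — is the treatment of the boundary terms at $0$ and $T$: making sure that the "half-mass" of the mollifier sitting outside $[0,T]$ (where $\bar u^T$ is constant) combines correctly with the "half-mass" inside, and that one never multiplies two sequences that converge only weakly. This is handled by (i) the evenness of $j$ (so the masses split as $\tfrac12,\tfrac12$), (ii) the weak continuity of $u,v$ up to the endpoints (so the interior half also sees $u(0)$, $u(T)$ in the limit), and (iii) keeping at least one slot un-mollified until after an integration by parts, so that the required convergence is always of the form (weakly convergent)$\times$(strongly or pointwise-strongly convergent) or is absorbed into \eqref{eq1}. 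Everything else — Fubini, the $O(\delta)$ measure of the near-diagonal symmetric-difference region, $u^T_\delta\in C^\infty$ and $u^T_\delta\to u$ in $L^p$ — is routine and I would state it without detailed computation.
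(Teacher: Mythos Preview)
For the first assertion your symmetric-difference argument would eventually work, but it is far heavier than needed. The paper simply writes
\[
(u,v_\delta^T)-(u_\delta^T,v)=(u,v_\delta^T-v)+(u-u_\delta^T,v),
\]
and each term integrates to zero over $[0,t]$ because $u,v\in L^\infty(0,T;H)$ while $v_\delta^T\to v$ and $u_\delta^T\to u$ in $L^1(0,T;H)$ by standard mollification convergence. Nothing more is required.

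For the second assertion your diagnosis of the obstacle is correct, but none of the routes you sketch actually closes. Route (iii) --- ``keep one slot un-mollified and integrate by parts'' --- cannot work as written: if the un-mollified slot is $u$ (or $v$) you cannot transfer the $\tau$-derivative onto it, since $u\in C_w(0,T;H)$ has no time derivative in $H$. Conversely, if you mollify both slots first and then integrate by parts, the boundary term $(u_\delta^T(T),v_\delta^T(T))$ is genuinely a product of two only-weakly-convergent sequences (the interior half of each mollified value sees $u$, $v$ near $T$ only weakly), and there is no reason it converges. So there is a real gap.

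The paper's device is different and cleaner: it exploits that $j_\delta'$ is \emph{odd}. After writing both summands as double integrals one has the exact cancellation
\[
\int_0^T\!\!\int_0^T \partial_\tau j_\delta(\tau-s)\,(v(s),u(\tau))\,ds\,d\tau
= -\int_0^T\!\!\int_0^T \partial_\tau j_\delta(\tau-s)\,(u(s),v(\tau))\,ds\,d\tau.
\]
What survives are exactly the four contributions from $s\notin[0,T]$ in the mollification integral, and on those strips the extended function $\bar u^T$ or $\bar v^T$ is the \emph{constant} $u(0)$, $u(T)$, $v(0)$ or $v(T)$. Each surviving term $R_i$ is therefore a pairing of a \emph{fixed} element of $H$ against a $\delta$-average of the other function near an endpoint; weak continuity at the endpoint plus dominated convergence gives $R_1(\delta)\to-\tfrac12(v(0),u(0))$, $R_2(\delta)\to\tfrac12(u(T),v(T))$, and symmetrically for $R_3,R_4$. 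This oddness cancellation is precisely the mechanism that prevents a product of two weak limits from ever appearing, and it is the idea your plan is missing.
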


\begin{proof}
In the following we omit the superscript $T$.
The first assertion holds since
	$$
	(u,v_\delta)-(u_\delta,v)=(u,v_\delta-v)+(v,u-u_\delta).$$
	and the weak continuity in time.
	
	To prove the second assertion note that $\seb{\partial_t}j_\delta$ is an odd function and therefore
	\begin{equation*}
	\int_0^T\int_0^T \frac{d}{d\tau}j_\delta(\tau-s)(v(s),u(\tau))~dsd\tau=-\int_0^T\int_0^T \frac{d}{d\tau}j_{\delta}(\tau-s)(u(s),v(\tau))~d\tau ds.
	\end{equation*}
	Hence
	\begin{align*}
	&\int_0^T\left(u,\frac{d}{dt}v_\delta\right)+\left(\frac{d}{dt}u_\delta,v\right)d\tau\\
	&\quad =\int_0^T\left(u(\tau),\int_{-\infty}^0 \frac{d}{d\tau}j_\delta(\tau-s) \bar{v}(s)ds\right)~d\tau+\int_0^T \left(u(\tau),\int_{T}^\infty \frac{d}{d\tau}j_\delta(\tau-s) \bar{v}(s)ds\right)d\tau\\
	&\qquad + \int_0^T\left(v(\tau),\int_{-\infty}^0 \frac{d}{d\tau}j_\delta(\tau-s) \bar{u}(s)ds\right)~d\tau+\int_0^T \left(v(\tau),\int_{T}^\infty \frac{d}{d\tau}j_\delta(\tau-s) \bar{u}(s)ds\right)d\tau\\
	&\quad:=R_1(\delta)+R_2(\delta)+R_3(\delta)+R_4(\delta).
	\end{align*}
By symmetry it suffices to prove $R_1(\delta)\to -\frac{1}{2}(u(0),v(0))$ and $R_2(\delta) \to \frac{1}{2}u(t)v(t)$.
	As $\bar{v}(s)\equiv v(0)$ for all $s<0$ and $j_\delta$ has support in $(-\delta,\delta)$ we get
	 \begin{align*}
	R_1(\delta)=\int_0^T(v(0),u(\tau))\int_\tau^\infty \frac{d}{ds}j_\delta(s)~dsd\tau
	=\int_0^\delta (v(0),u(\tau))\int_{\tau}^\delta \frac{d}{ds}j_\delta(s)~dsd\tau\\
	=\int_0^\delta (v(0),u(\tau)) (j_\delta(\delta)-j_\delta(\tau))~d\tau
	=-\frac{1}{\delta}\int_0^\delta (v(0),u(\tau))j\left(\frac{\tau}{\delta}\right)~d\tau\\
	=-\int_0^1(v(0), u(\delta\tau))j(\tau)d\tau.
	 \end{align*}
	By weak continuity we get
	$$\lim_{\delta \to 0}(v(0),u(\delta\tau))j(\tau) = (v(0),u(0))j(\tau).$$
	As $u \in L^\infty(0,T;H)$ we get by dominated convergence 
	$$\lim_{\delta \to 0} R_1(\delta)= -(v(0),u(0))\int_0^1j(\tau)~d\tau=-\frac{1}{2}(v(0),u(0)).$$
	The convergence of $R_2(\delta)$ is analogous.
\end{proof}
\seb{Here and in the following we will always consider the extension $\overline{u}$ introduced above implicitly. Meaning, that when ever necessary we extend any function to a global in (positive and negative) time object.}  In order to treat distributional time derivatives we will use the notation of the dual product over a variable domain by
\[
\int_0^T\skp{f}{\phi}_{\eta}\, dt:=\int_0^T\skp{f(t)}{\phi(t)}_{\Omega_{\eta(t)}}\, dt,
\]
where $\skp{f}{\phi}_{\Omega_{\eta(t)}}$ is the dual product over function spaces over $\Omega_{\eta(t)}$ which are assumed to be bilinear mappings that map into measurable functions in time. 

For our case of moving boundaries we will need the following convolution result that allows to con volute with respect to the moving geometry by keeping the solenoidality.

\begin{lem}
\label{lem:molly}
	Let $\eta \in \mathcal{V}_K$, such that $\eta$ is bounded uniformly from below.	
	Let $\phi\in L^{\seb{\kappa}}(0,T;L^q(\Omega_\eta))\cap L^\alpha(0,T;W^{1,a}(\Omega_\eta))$ for some $a>1$ and $\alpha,\seb{\kappa},q\geq 1$.
	Let $b \in L^2(0,T;L^1({\omega}))$ with $\phi(t,x,\eta(x))=(0,b(t,x))$ on $[0,T] \times \omega$ (in the sense of traces).
	
	Set $K:[0,T]\times [0,T]\times \R\times {\omega}\to \R^{3\times 3}$
	$$
	K(s,t,y,x)=\begin{pmatrix}
	\frac{\eta(s,x)}{\eta(t,x)} & 0 &0
	\\
	0 & \frac{\eta(s,x)}{\eta(t,x)} & 0
	\\
	-y\partial_{x_1}\big(\frac{\eta(s,x)}{\eta(t,x)}\big)& -y\partial_{x_2}\big(\frac{\eta(s,x)}{\eta(t,x)}\big)& 1
	\end{pmatrix}$$ 
	For each $\delta>0$ define $b_\delta=b*j_\delta$ and 
	$$ 
	{\phi}_\delta(t,x,y)=\int_0^T K(s,t,y,x)\phi\left(s,x,y\frac{\eta(s,x)}{\eta(t,x)}\right)\seb{j_\delta}(t-s)~ds.
	$$
	Then it holds for $\seb{\kappa}<\infty$ that 
	$$
	\di {\phi}_\delta=0,\quad {\phi}_\delta(t,x,\eta(x))=b_\delta(t,x)
	$$
	and ${\phi}_\delta \to \phi$ strongly $L^{\seb{\kappa}}(0,T;L^p(\Omega_\eta(t)))$ for all $p\in [1,\seb{\kappa})$.

	Moreover, 
\begin{enumerate}
\item if $\phi\in  L^2(0,T;W^{1,a}(\Omega_\eta))$  for all $a\in(1,2)$, then ${\phi}_\delta \to \phi$ converges weakly in  $L^2(0,T;W^{1,p}(\Omega_\eta(t))$ for all $p\in [1,2)$.
 \item if $\phi\in  L^2(0,T;W^{1,a}(\Omega_\eta))$ for $a>3$ than ${\phi}_\delta \to \phi$ converges weakly in  $L^2(0,T;H^1(\Omega_\eta(t))$.
\item if $\phi\in H^1(0,T;\tilde{W}^{-1,p'}(\Omega_\eta))\cap L^2(0,T;W^{1,a}(\Omega_\eta))$ for some $a>3$ and some $p\in (1,2)$ then  $\partial_t{\phi}_\delta\to \partial_t\phi$ converges weakly in 
$L^2(0,T;\tilde{W}^{-1,p'}(\Omega_\eta)))$.
\end{enumerate}	

\end{lem}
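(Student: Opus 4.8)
The strategy is to verify the three listed assertions in order, building on the basic convergence ${\phi}_\delta\to\phi$ and the algebraic identity $\operatorname{div}{\phi}_\delta=0$ that are already established before the enumerate block. The key structural observation is that $K(s,t,y,x)$ is exactly the (transpose of the) Jacobian, up to determinant normalization, of the map $(x,y)\mapsto(x,y\,\eta(s,x)/\eta(t,x))$ carrying $\Omega_{\eta(t)}$ to $\Omega_{\eta(s)}$; this is the Piola transform, which is precisely the device that preserves solenoidality and normal traces. So I would first record, as a lemma-internal computation, the change-of-variables estimate: for fixed $s,t$, the operator $\phi(s,\cdot)\mapsto K(s,t,\cdot)\phi(s,\cdot\,\eta(s)/\eta(t))$ is bounded on $W^{1,a}(\Omega_{\eta(t)})$ with operator norm controlled by $\|\eta\|_{L^\infty(H^2)}$, $\|\eta^{-1}\|_{L^\infty}$ and — crucially for the first-derivative bound — by $\|\eta(s)-\eta(t)\|$ through the factor $\partial_x(\eta(s)/\eta(t))$. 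Here $\eta\in\mathcal V_K\hookrightarrow C([0,T];H^{2-\epsilon})$, so $\|\eta(s,\cdot)-\eta(t,\cdot)\|_{H^{2-\epsilon}}\to 0$ as $|s-t|\to 0$, and $j_\delta$ concentrates on $|s-t|<\delta$; this is what makes the gradient of ${\phi}_\delta$ converge to the gradient of $\phi$ rather than just staying bounded.

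For (1), I write $\nabla{\phi}_\delta(t)=\int_0^T\big(\nabla_z K(s,t)\big)\phi(s,\cdot)\,j_\delta(t-s)\,ds+\int_0^T K(s,t)\,(\nabla\phi)(s,\cdot)\,(\text{chain rule factors})\,j_\delta(t-s)\,ds$. The second term converges to $\nabla\phi$ in $L^2(0,T;L^p)$, $p<2$, by combining the standard $L^p$-mollification convergence in time with the fact that $K(s,t)\to\mathbb I$ and the chain-rule Jacobian factors tend to the identity uniformly as $s\to t$ (an $\epsilon/3$-type splitting: freeze $K$ at $t$, use time-mollification, then estimate the $K$-error). The first term involves $\nabla_x K\sim\nabla_x\big(\eta(s)/\eta(t)\big)$, which picks up $\nabla^2\eta\in L^\infty(L^2)$; paired with $\phi\in L^2(0,T;W^{1,a}(\Omega_\eta))\subset L^2(0,T;L^{q'})$ for suitable $q'$, Lemma~\ref{lem:interpol} (applied to $b=\nabla^2\eta\in L^\infty(L^2)$ and $\phi$) gives a product in $L^2(0,T;L^p)$, $p<2$, uniformly in $\delta$; since $\eta(s)-\eta(t)\to0$ in $H^{2-\epsilon}$ the prefactor $\nabla_x(\eta(s)/\eta(t))$ is small in the relevant norm and this term vanishes in the limit. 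Weak convergence then follows from uniform boundedness plus the identification of the limit on a dense set. For (2), the only change is that $a>3$ gives $\phi\in L^2(0,T;W^{1,a})\hookrightarrow L^2(0,T;L^\infty)$, so the product $\nabla^2\eta\cdot\phi$ now lies in $L^2(0,T;L^2)$ directly (H\"older: $L^\infty(L^2)\cdot L^2(L^\infty)\subset L^2(L^2)$), and the same chain-rule term converges in $L^2(0,T;L^2)$; hence weak $L^2(0,T;H^1)$ convergence.

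For (3), the point is to commute $\partial_t$ past the convolution. Writing $\phi_\delta(t)=\int K(s,t,\cdot)\,\tilde\phi^{(t)}(s,\cdot)\,j_\delta(t-s)\,ds$, the time derivative produces three groups of terms: (i) $j_\delta'(t-s)$ hitting $\phi$, which reconstructs the mollification of $\partial_t\phi\in L^2(0,T;\tilde W^{-1,p'})$ in the distributional/duality sense and converges weakly to $\partial_t\phi$ in that space; (ii) $\partial_t K(s,t)$ terms, which bring down $\partial_t\big(\eta(s)/\eta(t)\big)$; since $\partial_t\eta\in L^\infty(0,T;L^2(\omega))$ and, by Theorem~\ref{thm:boris}, $\partial_t\eta\in L^2(0,T;H^\sigma)$, these combine with $\phi\in L^2(0,T;W^{1,a})\hookrightarrow L^2(0,T;L^\infty)$ and the test function $\psi\in L^2(0,T;W^{1,p}(\Omega_\eta))$ to give a bounded pairing, and the prefactor smallness as $s\to t$ kills them; (iii) the $\partial_t$ falling on the inner argument $y\,\eta(s,x)/\eta(t,x)$ of $\phi$ — this is the genuinely delicate one, because it differentiates $\phi$ in space, which we only control in $W^{1,a}$, times $\partial_t\eta$, which is only in $L^2(H^\sigma)$, so the honest bound is in $L^2(0,T;\tilde W^{-1,p'})$ obtained by integrating by parts in $z$ against the test function and distributing derivatives so that at most one derivative lands on $\phi$. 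Throughout, one must be scrupulous that the pairing $\langle\cdot,\cdot\rangle_{\tilde W^{-1,p'},W^{1,p}}$ is against test functions vanishing on $B_c$, which is why the target is $\tilde W^{-1,p'}$ (dual of the full Sobolev space with the $B_c$-boundary condition) and not the dual of its solenoidal subspace — exactly the subtlety flagged in Remark~\ref{rem:serrin} and in the footnote about \cite{Gui10}.

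\textbf{Main obstacle.} The hard part is assertion (3): controlling $\partial_t\phi_\delta$ uniformly in $\tilde W^{-1,p'}$. The difficulty is that differentiating the Piola-transported, geometry-following mollification in time unavoidably couples $\partial_t\eta$ (low space regularity, only $L^2(H^\sigma)$, $\sigma<1/2$, from Theorem~\ref{thm:boris}) with a spatial derivative of $\phi$ (only $L^2(W^{1,a})$), and the product of these does not live in any function space — one must keep it as a distribution and split derivatives carefully onto the test function. Getting the exponents to close (so that the product of $\partial_t\eta$, a derivative of $\eta$, and $\phi$ really pairs with $W^{1,p}$, $p\in(1,2)$) is where the embeddings $H^2(\omega)\hookrightarrow W^{1,\infty-}$, $W^{1,a}(\Omega_\eta)\hookrightarrow L^\infty$ for $a>3$, and the trace theorem must be orchestrated precisely, and where the assumption $p<2$ (equivalently the Ladyzhenskaya–Prodi–Serrin-type index) is actually used.
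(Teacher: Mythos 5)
Your structural reading of $K$ as a Piola-type transform is correct, and your decomposition of $\partial_t\phi_\delta$ into the three groups --- $j_\delta'$ hitting the kernel, $\partial_t K$, and $\partial_t$ landing on the inner argument --- is exactly the decomposition the paper uses. For the basic $L^\kappa(L^p)$ strong convergence you propose an $\epsilon/3$-argument (freeze $K$ at $t$, mollify, estimate the $K$-error); the paper instead changes variables to the reference cylinder $\omega\times[0,1]$, where the quantity becomes a standard convolution on a fixed domain --- a cleaner but essentially equivalent route. Your treatment of items (1) and (2), isolating $|\phi|\,|\nabla^2\eta|$ as the critical product and handling it via Lemma~\ref{lem:interpol} for (1) and via $W^{1,a}\hookrightarrow L^\infty$ ($a>3$) for (2), coincides with the paper's.

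The gap is in item (3), and in where you locate the difficulty. You list the $\partial_t K$ group as bringing down only the diagonal factor $\partial_t(\eta(s)/\eta(t))$ and close the bound by combining $\partial_t\eta\in L^\infty(L^2)\cap L^2(H^\sigma)$ with $\phi\in L^2(L^\infty)$ and $\psi\in L^2(W^{1,p})$. But the lower-left block of $K$ is $-y\,\partial_{x_i}(\eta(s)/\eta(t))$, so $\partial_t K$ also produces $-y\,\partial_t\partial_{x_i}(\eta(s)/\eta(t))$, and $\nabla_x\partial_t\eta$ is controlled in no usable space --- you have $\partial_t\eta$ only in $L^\infty(L^2)\cap L^2(H^\sigma)$ with $\sigma<1/2$. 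A direct H\"older pairing of that factor against $\phi^i\psi^3$ cannot close. The missing step, and the one crucial ingredient your plan does not reach, is an integration by parts in $x_i$ that shifts $\partial_{x_i}$ off the $\eta$-quotient and onto $\phi^i\psi^3$; the boundary term vanishes precisely because the horizontal trace $\phi^i(t,x,\eta(t,x))=0$ for $i\in\{1,2\}$. This is where the coupling condition on $\phi$ at the interface actually enters, and the same maneuver is needed again for the $\partial_s K$ term generated after writing $\partial_t j_\delta(t-s)=-\partial_s j_\delta(t-s)$ and integrating by parts in $s$ to extract the mollification of $\partial_s\phi$ against the transported test function. Conversely, you flag the $\partial_t$-on-the-argument term as the ``genuinely delicate'' one on the grounds that $\partial_y\phi\cdot\partial_t\eta$ ``does not live in any function space'', but it does: $\partial_y\phi\in L^a$ ($a>3$), $\partial_t\eta$ is a function of $x$ alone and --- using Theorem~\ref{thm:boris} --- lies in $L^2(0,T;L^r(\omega))$ for $r$ close to $4$, and $\psi\in W^{1,p}\hookrightarrow L^{p^*}$ for $p\in(1,2)$, so a direct H\"older pairing closes with room to spare and this is in fact the simplest of the three groups. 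Move the integration by parts from (iii) to (ii) and the proof closes as intended.
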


\begin{proof}
	We define 
	$${\phi}(s,t,x,y)=K(s,t,x,y)\phi\left(s,x,y\frac{\eta(s,x)}{\eta(t,x)}\right)$$
	If we show that $\di {\phi} (t,s,x,y)\equiv 0$ then clearly also $\di {\phi}_\delta=0$. We get
	\begin{align*}
	\di{\phi}&=
	\left(\frac{\eta(s,x)}{\eta(t,x)}\right)\phi^1+\frac{\eta(s,x)}{\eta(t,x)} \di_x \phi^1+y\partial_y \phi^1  \nabla\big(\frac{\eta(s,x)}{\eta(t,x)}\big)\frac{\eta(s,x)}{\eta(t,x)}
	-\nabla\left(\frac{\eta(s,x)}{\eta(t,x)}\right) \phi^1
	\\
	&\quad -y\nabla\left(\frac{\eta(s,x)}{\eta(t,x)}\right)\frac{\eta(s,x)}{\eta(t,x)}\partial_y \phi^1+\frac{\eta(s,x)}{\eta(t,x)}\partial_y \phi^2
	=\frac{\eta(s,x)}{\eta(t,x)}(\partial_y \phi^2+\di_x \phi^1)=0,
	\end{align*}
	where we used in the last line that $\di \phi=0$.
	Now as $\phi(t,x,\eta(t,x))=(0,b(t,x))$ we get
	$${\phi}(s,t,x,\eta(t,x))=\phi(s,x,\eta(s,x))=(0,b(s)).$$
	Thus
	$${\phi}_\delta(t,x,\eta(t,x))=\int_0^T b(s)j_\delta(t-s)ds=b_\delta(t,x).$$
	\seb{For the convergence result we introduce the function on the reference domain
	\[
	\phi_0:[0,T]\times \omega\times [0,1]\to \mathbb{R}^3,\quad (t,x,y)\mapsto \phi(t,x,y\eta(t,x)).
	\]
	}
	 \seb{Let $ p\in [1,\kappa)$}. First we estimate ${\phi}_\delta^1-\phi^1$ in $L^\seb{\kappa}(0,T;L^p(\Omega_\eta(t)))$. We have
	\begin{equation*}
	({\phi}^1_\delta-\phi^1)(t,x,y)=\int_0^T (\frac{\eta(s,x)}{\eta(t,x)}\phi^1(s,x,y\frac{\eta(s,x)}{\eta(t,x)})-\phi^1(t,x,y))j_\delta(t-s)ds\\
	\end{equation*}
	Hence (by a change of variables) we find
	\begin{align*}
	&\int_0^T \left(\int_{\Omega_\eta(t)}|({\phi}^1_\delta-\phi^1)(t,x,y)|^pdxdy\right)^{\frac{\seb{\kappa}}{p}}dt
	\\
	&\quad =\int_0^T\left(\int_{\omega\times [0,1]}\left|\int_0^T(\eta(s,x)\phi^1(s,x,y\eta(s,x))-\eta(t,x)\phi^1(t,x,y\eta(t,x)))
	j_\delta(t-s)ds\right|^pdz\right)^{\frac{\seb{\kappa}}{p}}dt
	\\
	&\quad = \|\varphi_\delta-\varphi\|_{L^\seb{\kappa}(0,T;L^p(\omega \times [0,1]))}		
	\end{align*}
	\seb{for $\varphi(t,x,y)=\eta(t,x)\phi^1_0(t,x,y)$.} As $\eta \in L^\infty(0,T;L^\infty(\omega))$ and $\phi \in L^\seb{\kappa}(0,T;L^q(\Omega_\eta))$ this converges to $0$ by standard convolution estimates.
	Next note by a similar argument that
	\begin{align*}
	&\int_0^T\left(\int_{\Omega_\eta(t)}\left|\int_0^T(\phi^2\left(s,x,y\frac{\eta(s,x)}{\eta(t,x)}\right)-\phi^2(t,x,y))j_{\seb{\delta}}(t-s)~ds\right|^pdz\right)^{\frac{\seb{\kappa}}{p}}dt
	\\
	&\quad \le\|\eta\|_{L^\infty_t(0,T;L^\infty(\omega))}\|\phi^2_{0,\delta}-\phi^2_{0}\|_{L^\seb{\kappa}(0,T;L^p(\omega\times [0,1]))},
	\end{align*}
	 which also converges to $0$.
	Lastly 
	\begin{align*}
	&\int_0^T\left(\int_{\Omega_\eta(t)}\left|\int_0^T y\nabla\left(\frac{\eta(s)}{\eta(t)}\right)\phi^1\left(s,x,y\frac{\eta(s)}{\eta(t)}\right)j_{\seb{\delta}}(t-s)ds\right|^pdz\right)^{\frac{\seb{\kappa}}{p}}dt
	\\
	&=\int_0^T\left(\int_{\omega\times [0,1]}\left|\int_0^T y\seb{\eta(t)}\nabla\left(\frac{\eta(s)}{\eta(t)}\right)\phi^1_0(s,x,y)j_h(t-s)ds\right|^pdz\right)^{\frac{s}{p}}dt
	\end{align*}
	As $j_\delta$ has unit integral we can compute
	\begin{align*}
	&\int_0^T \seb{\eta(t)}\nabla\left(\frac{\eta(s)}{\eta(t)}\right)\phi^1_0(s)j_{\seb{\delta}}(t-s)ds
	=\int_0^T \phi_0^1(s)j_\delta(t-s)(\nabla \eta(s)-\nabla\eta(t))+\seb{\frac{\nabla\eta(t)}{\eta(t)}}(\eta(t)-\eta(s)))~ds
	\\
	&\quad =\int_0^T j_\delta(t-s)(\phi_0(s)\nabla\eta(s)-\phi_0(t)\nabla\eta(t))
	 +j_\delta(t-s)\seb{\frac{\nabla\eta(t)}{\eta(t)}}(\phi_0(s)\eta(s)-\phi_0(t)\eta(t))\\
	\\
	&\qquad +2j_\delta(t-s)\nabla\eta(t)(\phi_0(t)-\phi_0(s))~ds
	\end{align*}
	Thus
	\begin{align*}
	&\int_0^T\left(\int_{\Omega_\eta(t)}\left|\int_0^T y\nabla\left(\frac{\eta(s)}{\eta(t)}\right)\phi^1\left(s,x,y\frac{\eta(s)}{\eta(t)}\right)j_\delta(t-s)ds\right|^pdz\right)^{\frac{\seb{\kappa}}{p}}dt
	\\
	&\quad \le \|(\nabla\eta\phi^1_0)_\delta-\nabla\eta\phi^1_0\|_{L^2(0,T;L^p(\Omega_\eta))} +
	\int_0^T\left(\int_{\omega\times [0,1]}\seb{\frac{\abs{\nabla\eta(t)}^p}{\abs{\eta(t)}^p}} |(\eta\phi^1_0)_\delta(t)-\eta(t)\phi^1_0(t)|^pdz\right)^{\frac{\seb{\kappa}}{p}}dt
	\\
&\quad 	+\int_0^T\left(\int_{\omega\times [0,1]}|\nabla\eta(t)|^p|\phi^1_{0,\delta}(t)-\phi^1_0(t)|^pdz\right)^{\frac{\seb{\kappa}}{p}}dt
	\end{align*}
	The first term converges to $0$ by standard convolution. The third term we can estimate as $p<q$
	\begin{equation*}\int_0^T\left(\int_{\omega\times [0,1]}|\nabla\eta|^p|\phi_0^1*j_\delta-\phi_0^1|^p~dz\right)^{\frac{v}{p}}dt
	\le \|\nabla\eta\|_{L^\infty(0,T;L^{q*}(\omega))}\|\phi_{0,\delta}^1-\phi_0^1\|_{L^\kappa(0,T;L^{q}(\omega \times [0,1]))}.
	\end{equation*}
	 Hence this term converges to $0$ as well. The third term can be estimated analogously using the assumed uniform lower bounds on $\eta$.

	As we have shown strong convergence in $L^2(0,T;L^p(\Omega_\eta(t)))$ it suffices to show that $\nabla{\phi}_\delta$ is bounded in $L^2(0,T; L^{p}(\Omega_\eta(t)))$ to prove weak convergence.
	The estimate on the gradient is a standard exercise combining the bounds of $\eta$ and $\phi$ via H\"older's inequality. For that reason we omit here most of the details and only mention the critical terms that appear in the estimates.
	One critical term appearing in the estimates for  (1), (2), (3) can be estimated using
 \[
 	\abs{\nabla \phi}\abs{\nabla \eta}\in L^2(0,T;L^{p}(\Omega_\eta)\text{ for all }p\in [1,a).
 \]
	Moreover, one needs
	\begin{enumerate}
	\item[for (1)] $	\abs{ \phi}\abs{\nabla^2 \eta}\in L^2([0,T];L^{p} (\Omega_\eta))\text{ for all }p\in [1,2)$ by Lemma~\ref{lem:interpol}.
	\item[for (2)] $	\abs{ \phi}\abs{\nabla^2 \eta}\in L^2([0,T];L^{2} (\Omega_\eta))$ as $\phi\in L^2(L^\infty)$ by Sobolev embedding.
	\end{enumerate}

Next let us consider the weak time derivative. 
Let us take $\psi\in\tilde{W}^{1,p'}([0,T] \times{\omega}\times \R))$, such that $\psi(t,x,y)=0$ for all $x\in B_c$ and $\norm{\psi}_{W^{1,p'}([0,T]\times{\omega}\times \R)}\leq 1$ to find that 
	\begin{align*}
	\int_0^T\skp{\partial_t {\phi}_\delta}{\psi}
&= \int_0^T\int_0^T\skp{\partial_tK(s,t,y,x)\phi\Big(s,x,y\frac{\eta(s)}{\eta(t)}\Big)j_\delta(t-s)}{ \psi(t,z)}\,ds\, dt
\\
&\quad +\int_0^T\int_0^T\int_{\Omega_\eta}K(s,t,y,x)\phi\Big(s,x,y\frac{\eta(s)}{\eta(t)}\Big)\partial_tj_\delta(t-s)\cdot \psi(t,z)\, dz\,ds\, dt
\\
&\quad -\int_0^T\int_0^T\int_{\Omega_\eta}K(s,t,y,x)\partial_y\phi\Big(s,x,y\frac{\eta(s)}{\eta(t)}\Big)y\frac{\eta(s)}{\eta^2(t)}\partial_t\eta(t) j_\delta(t-s)\cdot \psi(t,z)\, dz\,ds\, dt
\\
&=(I)+(II)+(III)
	\end{align*}
The expression $(I)$ can be transferred into an integral by using partial integration in $x_i$ and the fact that $\phi^i(t,x,\eta(t,x))= 0$ for $i\in \{1,2\}$ and $(t,x)\in [0,T]\times \omega$:
\begin{align*}
(I)&=\sum_{i=1}^2\int_0^T\!\!\!\int_0^T\!\!\!\bigg(-\skp{y \partial_t\partial_{x_i}\Big(\frac{\eta(s,x)}{\eta(t,x)}\Big)\phi^i\Big(s,x,y\frac{\eta(s)}{\eta(t)}\Big)}{ \psi^3(t)}\,ds\, dt
\\
&\quad +\int_{\Omega_{\eta(t)}} \partial_t \Big(\frac{\eta(s,x)}{\eta(t,x)}\Big)\phi^i\Big(s,x,y\frac{\eta(s)}{\eta(t)}\Big)\cdot \psi^i(t,z)\, dz \bigg)j_\delta(t-s) \,ds\, dt
\\
&=\sum_{i=1}^2\int_0^T\!\!\!\int_0^T\!\!\!\int_{\omega} \partial_t \Big(\frac{\eta(s,x)}{\eta(t,x)}\Big) \bigg(\partial_{x_i}\int_0^{\eta(t,x)}y\phi^i\Big(s,x,y\frac{\eta(s)}{\eta(t)}\Big)\cdot \psi^3(t,x,y)\, dy
\\
&\quad + \int_0^{\eta(t,x)}y\phi^i\Big(s,x,y\frac{\eta(s)}{\eta(t)}\bigg)\cdot \psi^i(t,x,y)\, dy\bigg)\,dx j_\delta(t-s) \,ds\, dt 
\\
&=\sum_{i=1}^2\int_0^T\!\!\!\int_0^T\!\!\!\int_{\omega} \partial_t \Big(\frac{\eta(s,x)}{\eta(t,x)}\Big) \bigg(\int_0^{\eta(t,x)}y\partial_{x_i}\Big(\phi^i\Big(s,x,y\frac{\eta(s)}{\eta(t)}\Big)\cdot \psi^3(t,x,y)\Big)\, dy
\\
&\quad + \int_0^{\eta(t,x)}y\phi^i\Big(s,x,y\frac{\eta(s)}{\eta(t)}\bigg)\cdot \psi^i(t,x,y)\, dy\bigg)\,dx j_\delta(t-s) \,ds\, dt.
\end{align*}
But these expression can be estimated using that $p^*=\frac{3p}{3-p}$ can be assumed to be close enough to 6 such that
\begin{align*}
(I)&\leq C\int_0^T\norm{\partial_t\eta}_{L^2(\omega)}
\big((\norm{\phi}_{W^{1,s}(\Omega_{\eta(t)})}+\norm{\abs{\nabla\phi}\abs{\nabla \eta}}_{L^{3+(3-s)/2}(\Omega_{\eta})})\norm{\Psi}_{L^{p^*}(\Omega_{\eta})}+\norm{\Psi}_{W^{1,p}(\Omega_{\eta})}\big)\, dt.
\end{align*}
This expression is bounded as $\partial_t\eta\in L^\infty(0,T;L^2(\omega))$, $\abs{\nabla \eta}\abs{\nabla \phi}\in L^2(0,T;L^q(\Omega_\eta))$ for all $q\in [3,s)$.
The estimate on $(III)$ is analogous (but simpler).
 
 For $(II)$ we use $\partial_tj_\delta(t-s)=\partial_s j_\delta(t-s)$ to find (using the $0$-trace of $j_\delta(t-s)$ that)
	\begin{align*}
	(II)&=
	\int_0^T\int_0^T\partial_s\skp{(K(s,t,y,x)\phi\Big(s,x,y\frac{\eta(s)}{\eta(t)}\Big)j_\delta(t-s)}{ \psi(t,z)}\,ds\, dt
\\
&\quad -
\int_0^T\int_0^T\skp{\partial_sK(s,t,y,x)\phi\Big(s,x,y\frac{\eta(s)}{\eta(t)}\Big)j_\delta(t-s)}{\psi(t,z)}\,ds\, dt
\\
&\quad -\int_0^T\int_0^T\int_{\Omega_\eta}K(s,t,y,x)\partial_y\phi\Big(s,x,y\frac{\eta(s)}{\eta(t)}\Big)y\frac{\partial_s\eta(s)}{\eta(t)} j_\delta(t-s)\cdot \psi(t,z)\, dz\,ds\, dt
\\
&\quad -
\int_0^T\int_0^T\skp{K(s,t,y,x)\partial_s\phi\Big(s,x,y\frac{\eta(s)}{\eta(t)}\Big)j_\delta(t-s)}{\psi(t,z)}\,ds\, dt,
\\
&=: II_1+II_2+II_3+II_4.
	\end{align*}
	First observe, that$II_1=0$. The estimates on $II_2$, $II_3$ are similar to the estimate of $(I)$ above.  
	Now, finally $II_4$ is estimated using the assumption on $\partial_t\phi$. We define 
$\hat{K}^T(s,t,y,x)$ in such a way that
		\begin{align*}
	II_4&=-\int_0^T\int_0^T\skp{\partial_s\phi\Big(s,x,y\frac{\eta(s)}{\eta(t)}\Big)}{K^T(s,t,y,x)\psi(t,z)}_{\Omega_{\eta(t)}}j_\delta(t-s)\,ds\, dt
	\\
	&= -\int_0^T\int_0^T\skp{\partial_s\phi(s,z)}{\hat{K}^T(t,s,y,x)\psi\Big(s,x,y\frac{\eta(t)}{\eta(s)}\Big)}_{\Omega_{\eta(s)}}j_\delta(t-s)\,ds\, dt.
	\end{align*}
	This implies that
	\begin{align*}
	II_4\leq \int_0^T\int_0^T\norm{\partial_t\phi(s)}_{\tilde{W}^{-1,p'}(\Omega_{\eta})}
	\normlr{\hat{K}^T(t,s,y,x)\psi\Big(s,x,y\frac{\eta(t)}{\eta(s)}\Big)}_{W^{1,p}(\Omega_{\eta})}j_\delta(t-s)\,ds\, dt,
	\end{align*}
	which is uniformly bounded using $\abs{\nabla \psi}\abs{\nabla \eta}^2\in L^2(0,T;L^{p}(\Omega_\eta))$ and $ \abs{ \psi}\abs{\nabla^2 \eta}\in L^2([0,T];L^{p} (\Omega_\eta))$ for all $p\in [1,2)$.
	
\end{proof}

\subsection{The distributional time derivatives.}

 En passant we include here a result that is independent of our main result but might be important for further use. Here a meaning is given to the distributional time derivative of solutions. 

\begin{prop}
\label{pro:time}
Let $(v,p,\eta)$ be a weak solution satisfying \eqref{eq}, then if $v\in L^2(0,T;W^{1,s}(\Omega_{\eta}))$ for $s\geq 2$, than  
\[
\partial_t v+[\nabla v] v\in L^2(0,T;(W^{1,q}_{0,\di}(\Omega_{\eta})^*),
\]
for any $q\in (2,\infty)$ if $s=2$ and $q=2$ if $s>2$.

This means\footnote{The expression \eqref{eq:weak-time} seems to be the appropriate definition of a weak time derivative in the setting of fluid-structure interaction.} that for
$\phi\in L^{2}(0,T;W^{1,q}_{0,\di}(\Omega_{\eta}))$ we find that
\begin{align}
\label{eq:weak-time}
\int_0^T\skp{\partial_tv+[\nabla v] v}{\phi}_\eta\, dt=-\int_0^T\int_{\Omega_{\eta}}\nabla v\cdot \nabla \phi\, dx\, dt.
\end{align}

Moreover, $(\partial_t v+[\nabla v] v,\partial_t^2\eta) \in L^2(0,T;\mathcal{W}^*)$ for 
\[
\mathcal{W}=\{(\phi,b)\in W^{1,q}_{\di}(\Omega_{\eta})\times H^2({\omega})\, :\,\phi(t,x,\eta(x))=b(t,x)\}
\]
for any $q\in (2,\infty)$ if $s=2$ and $q=2$ if $s>2$.

 In particular, for all 
$(\phi,b)\in \mathcal{W}$ we find that
\begin{align*}
\int_0^T\skp{\partial_tv+[\nabla v] v}{\phi}_\eta+\skp{\partial_t^2\eta}{b}\, dt=-\int_0^T\int_{\Omega_{\eta}}\nabla v\cdot \nabla \phi \, dx\, dt +\int_0^T\int_{{\omega}} \nabla^2 \eta \cdot \nabla^2b\, dx\, dt.
\end{align*}
\end{prop}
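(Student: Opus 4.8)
\emph{Strategy.} I would split the assertion into two parts. First, the right--hand sides of \eqref{eq:weak-time} and of its coupled version define bounded linear functionals on $L^2(0,T;W^{1,q}_{0,\di}(\Omega_\eta))$, respectively on $L^2(0,T;\mathcal W)$; together with an a~priori bound on the convective term this yields the claimed membership $\partial_t v+[\nabla v]v\in L^2(0,T;(W^{1,q}_{0,\di}(\Omega_\eta))^*)$ and $(\partial_t v+[\nabla v]v,\partial_t^2\eta)\in L^2(0,T;\mathcal W^*)$. Second -- the real content -- this object is \emph{the} one compatible with the weak formulation \eqref{eq}, i.e.\ the two displayed identities hold for all admissible test functions, in particular those carrying no time regularity. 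Since a divergence--free field vanishing on $\partial\Omega_\eta$ has no interface coupling, the first identity is (up to the obvious boundary terms) the instance $b\equiv0$ of the coupled one, so I would only treat the coupled version.

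\emph{Part one: boundedness.} The map $\phi\mapsto\int_0^T\int_{\Omega_\eta}\nabla v\cdot\nabla\phi\,dz\,dt$ is bounded by $\|\nabla v\|_{L^2(0,T;L^2)}\|\nabla\phi\|_{L^2(0,T;L^q)}\abs{\Omega}^{1/2-1/q}$ because $q\ge2$ and $\Omega_{\eta(t)}$ is bounded, and $(\phi,b)\mapsto\int_0^T\int_{\omega}\nabla^2\eta\cdot\nabla^2b$ is bounded by $\|\eta\|_{L^\infty(0,T;H^2)}\|b\|_{L^2(0,T;H^2)}$; both use only the energy regularity of $(v,\eta)\in\mathcal V_S$. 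For $[\nabla v]v$ I would distinguish: if $s>2$, use $\nabla v\in L^2(0,T;L^s)$, the embedding $H^1(\Omega_{\eta(t)})\hookrightarrow L^6$ for the test function, and the interpolation $v\in L^\infty(0,T;L^2)\cap L^2(0,T;L^6)$ to bound $t\mapsto\|[\nabla v]v(t)\|_{(W^{1,2}_{0,\di})^*}$ in $L^2(0,T)$; if $s=2$, choose $q$ large enough that $W^{1,q}(\Omega_{\eta(t)})\hookrightarrow L^\infty$ and estimate $\int\abs{\nabla v}\abs{v}\abs{\phi}\le\|\phi\|_{L^\infty}\|\nabla v\|_{L^2}\|v\|_{L^2}$, which lies in $L^2(0,T)$ since $\|\nabla v\|_{L^2}\in L^2(0,T)$ and $\|v\|_{L^2}\in L^\infty(0,T)$. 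Thus $[\nabla v]v\in L^2(0,T;(W^{1,q}_{0,\di})^*)$, and $\partial_t v+[\nabla v]v$ is then well defined in the stated space via the weak formulation.

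\emph{Part two: the identity.} Fix $(\phi,b)$ in the test class and $\theta\in C_c^\infty(0,T)$. Applying the operator of Lemma~\ref{lem:molly} produces $(\phi_\delta,b_\delta)$, with $b_\delta=b*j_\delta$, that is still solenoidal, satisfies the interface coupling $\phi_\delta(t,x,\eta(t,x))=(0,b_\delta(t,x))$ and the condition on $B_c$, and -- crucially -- is an \emph{admissible} test pair for \eqref{eq}, its time derivative having enough integrability by parts (1)--(3) of Lemma~\ref{lem:molly} (where Theorem~\ref{thm:boris} enters through the bounds on $\eta$). Inserting $(\phi_\delta,b_\delta)$ into \eqref{eq}, pairing with $\theta$, and letting $\delta\to0$: the purely spatial terms $\int2\nablasym v:\nablasym\phi_\delta$, $\int(v\otimes v):\nabla\phi_\delta$, $\int\nabla^2\eta:\nabla^2b_\delta$ (and the outer--force terms) converge because $\phi_\delta\to\phi$ strongly in $L^\kappa(0,T;L^p(\Omega_\eta))$ and weakly in $L^2(0,T;H^1(\Omega_\eta))$ (Lemma~\ref{lem:molly}(2), with $a=s>3$; for the first identity with the relevant $q$), and $b_\delta\to b$ in $L^2(0,T;H^2(\omega))$. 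Korn's identity (Lemma~\ref{lem:korn}) and an integration by parts in $x$ -- legitimate since $\di v=0$ and $\phi_\delta$ vanishes on $B_c$ -- turn $2\int\nablasym v:\nablasym\phi-\int(v\otimes v):\nabla\phi$ into $\int\nabla v:\nabla\phi+\int[\nabla v]v\cdot\phi$, while the cancellation of the fluid interface stress against $\mathcal F$ (already built into \eqref{eq}) removes the interface contribution. Running over all $\theta\in C_c^\infty(0,T)$ gives the $X^*$--valued distributional identity $\partial_tv+[\nabla v]v=-\int_{\Omega_\eta}\nabla v\cdot\nabla(\cdot)$ (coupled with the plate term), and since the right--hand side is in $L^2(0,T;X^*)$ by part one, the asserted integrated identity follows.

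\emph{Main obstacle.} It is the passage to the limit in the two time--derivative terms $\tfrac{d}{dt}\!\int_{\Omega_\eta}v\cdot\phi_\delta$ and $\partial_t\!\int_\omega\partial_t\eta\,b_\delta$, where $\phi$ is assumed to have \emph{no} time regularity at all: without the mollification of Lemma~\ref{lem:molly}, $\phi$ is simply not an admissible test function in \eqref{eq}, so that operator -- designed to preserve solenoidality and the coupling -- is indispensable. The limit itself is handled by the commutator identity \eqref{eq1} of Lemma~\ref{lemma}, which shows that transferring the $j_\delta$--mollification from $\phi_\delta$ back onto $v$ (respectively from $b_\delta$ onto $\partial_t\eta$) inside the pairings produces only an error tending to $0$, so that $\int_0^T\bigl[\tfrac{d}{dt}\!\int v\cdot\phi_\delta-\int v\cdot\partial_t\phi_\delta\bigr]\theta\,dt\to\int_0^T\langle\partial_t v,\phi\rangle_\eta\,\theta\,dt$; because $\theta\in C_c^\infty(0,T)$ only, the endpoint contributions of the second part of Lemma~\ref{lemma} never enter. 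One must also track the geometric correction matrix $K$ appearing in $\phi_\delta$, but by Lemma~\ref{lem:molly} all the additional terms it generates are of lower order and vanish in the limit.
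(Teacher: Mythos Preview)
Your proposal follows the same core strategy as the paper---mollify the test function via Lemma~\ref{lem:molly}, insert into the weak formulation, pass to the limit---so it is essentially correct. Your Part one on boundedness is useful context the paper leaves implicit.

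Where the paper is markedly simpler is in the limit passage. For time-regular $\phi$ the paper \emph{defines} $\int_0^T\langle\partial_t v+[\nabla v]v,\phi\rangle_\eta$ by the boundary-term expression $[\int_{\Omega_\eta} v\cdot\phi]_0^T-\int_0^T\!\int_{\Omega_\eta}(v\cdot\partial_t\phi+v\otimes v:\nabla\phi)$, which by the weak formulation equals the right-hand side $-\int_0^T\!\int_{\Omega_\eta}\nabla v\cdot\nabla\phi$. For general $\phi$ one mollifies, notes that the \emph{right-hand side} converges as $\delta\to0$ by the gradient convergence in Lemma~\ref{lem:molly}, and concludes that the left-hand side converges to the same thing---which \emph{is} the definition of the pairing for general $\phi$, as the footnote to \eqref{eq:weak-time} says explicitly. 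No term-by-term limit in the time-derivative piece is ever taken.

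Your ``main obstacle''---passing to the limit in $\tfrac{d}{dt}\!\int v\cdot\phi_\delta-\int v\cdot\partial_t\phi_\delta$ independently via the commutator identity \eqref{eq1} of Lemma~\ref{lemma}, while tracking the geometric matrix $K$---is therefore work the paper bypasses entirely. It is not wrong in spirit, but Lemma~\ref{lemma} concerns the standard time mollifier on a fixed Hilbert space, not the $K$-twisted operator of Lemma~\ref{lem:molly}; making your transfer argument rigorous requires exactly the ``lower-order $K$-correction'' analysis you yourself flag as hand-wavy. The paper's route of letting the right-hand side carry the limit avoids this completely.
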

\begin{proof}
Let $\phi\in L^{2}(0,T;W^{1,q}_{0,\di}(\Omega_{\eta}))$.
First observe, that if (additionally) $\partial_t\phi\in L^2([0,T]\times \Omega_\eta)$ and $\nabla \phi\in L^\infty(0,T;L^2(\Omega_\eta))$, than (as $\abs{v}^2\in L^1_t(L^2_z)$) we find
\begin{align*}
\int_0^T\skp{\partial_tv+(v\cdot\nabla)v}{\phi}_\eta:&=\int_{\Omega_{\eta(T)}} v(T)\cdot \phi(T)\, dz-\int_{\Omega_{\eta_0}}v^0\cdot \phi(0)\, dz-\int_0^T\int_{\Omega_{\eta}}v\cdot\partial_t\phi + v\otimes v\cdot\nabla\phi\,dz\, dt 
\\
&=-\int_0^T\int_{\Omega_{\eta}}\nabla v\cdot \nabla \phi\, dz\, dt.
\end{align*}
Hence, by taking the mollification introduced in Lemma~\ref{lem:molly} (here $b\equiv0$), we find that
\begin{align*}
\int_0^T\skp{\partial_tv+(v\cdot\nabla)v}{\phi_\delta}_\eta
&=-\int_0^T\int_{\Omega_{\eta}}\nabla v\cdot \nabla \phi_\delta\, dz\, dt,
\end{align*}
which implies the result by passing with $\delta\to 0$ by the convergence result of Lemma~\ref{lem:molly}. This allows to give the left hand side a well defined meaning; hence the domain of the left hand side can accordingly be extended. The proof of the second identity is analogous.

\end{proof}

\section{Proof of the main result}

\subsection{The set-up}
Throughout this section let $(v_1,\eta_1)$, $(v_2,\eta_2)$ be weak solutions to \textit{FSI} for initial conditions $v_1(0)=v_{1,0}$, $v_2(0)=v_{2,0}$, $\eta_1(0)=\eta_{1,0}$ $\eta_2(0)=\eta_{2,0}$ and $\partial_t\eta_1(0)=\eta_{1,0}^*$, $=\partial_t\eta_2(0)= \eta_{2,0}^* $. Let $v_2$ satisfy the additional regularity assumption  $v_2 \in L^{r}(0,T;W^{1,s}(\Omega_{\eta_2}))$, $\partial_t v_2 \in L^2(0,T;W^{-1,r}(\Omega_{\eta_2}))$  for some  $s>3$, $r>2$.  Note that as $\partial_t \eta_1=\Tr_{\eta_1}(v_1)$ and $\partial_t\eta_2 =\Tr_{\eta_2}(v_2)$ we have by the trace theorem for moving boundaries (see \cite[Lemma~6]{BreSch18}])
$$\partial_t\eta_1 \in L^2(0,T;H^{l}(\omega)), \quad \partial_t\eta_2 \in L^{r}(0,T;W^{\frac{3}{2},3}(\omega))$$
for all $l \in (0,1/2)$. By Theorem~\ref{thm:boris} we find additionally that
$$ \eta_1 \in L^2(0,T;H^{2+l}(\omega)),\quad \eta_2 \in L^r(0,T;H^{2+l}(\omega)), ~~l \in (0,1/2).$$ 

We
define
 the variable in time domains
\[
\Omega_1:=\Omega_{\eta_1}\text{ and }\Omega_2:=\Omega_{\eta_2}.
\]
Since most of the computations will be given on the domain of the weak solution $\Omega_1$ we introduce for $u:[0,T]\times \Omega_1\to \R^3$ the notation
\begin{align*}
\norm{u(t)}_{k,p}:=\norm{u(t)}_{W^{k,p}(\Omega_{\eta_1(t)})}, \quad \norm{u(t)}:=\norm{u(t)}_{L^2(\Omega_{\eta_1(t)})}\text{ and } (u(t),w(t)):=\skp{u(t)}{w(t)}_{\eta_1},
\end{align*}
whenever well defined.
Recall also, that in case a function $b:[0,T]\times \omega\to \R$ we will extend it constantly to  $b:[0,T]\times \omega\times \R \to \R$ without further notice. For such function we use
\begin{align*}
\norm{b(t)}_{k,p}:=\norm{b(t)}_{W^{k,p}(\omega)}, \quad \norm{b(t)}:=\norm{b(t)}_{L^2(\omega)}\text{ and } (u(t),w(t)):=\skp{u(t)}{w(t)}_{\omega}.
\end{align*}

The first step of the proof is to introduce a diffeomorphism $\psi: \Omega_1 \to \Omega_2$ to compare the velocity fields on the same domain. We define such a $\psi$ explicitly by 
\begin{align*}
\gamma:{\omega} &\to (0,\infty), \quad x \mapsto \frac{\eta_2(x)}{\eta_1(x)},\\ \psi: [0,T]\times {\omega} \times \R &\to [0,T] \times {\omega} \times \R \quad (t,x,y)\mapsto (t,x,\gamma(t,x)y).
\end{align*}
Then $\psi(\{t\}\times\Omega_1)=\{t\}\times \Omega_2$ for all $t \in [0,T]$. Note however that this transformation does not conserve the property of vanishing divergence. For that we follow the approach in~\cite{Gui10}. Define the $3\times 3$ matrix\footnote{ Here and in the following we use $(\mathbb{I}_2, 0)$ for
$\begin{pmatrix}
1 & 0 & 0
\\
0 & 1 & 0
\end{pmatrix}
$.
}
\begin{align*}J(t,x,y)&=D_z\psi(t,x,y)=\begin{pmatrix}
\mathbb{I}_2 & 0\\
y\nabla \gamma(t,x) & \gamma(t,x)
\end{pmatrix},\\ 
\tilde{J}&=J\circ \psi^{-1}=\begin{pmatrix}
\mathbb{I}_2&0\\
y\gamma^{-1}\nabla\gamma&\gamma(t,x)
\end{pmatrix}.
\end{align*}
Now for $w: [0,T] \times \Omega_2 \to \R^3$ set $\hat{w}=\gamma J^{-1} (w \circ \psi)$ and for $u: [0,T] \times \Omega_1 \to \R^3$ set $\check{u}=\gamma^{-1}\tilde{J} u \circ \psi^{-1}$. The next lemma shows that $(\hat{w}, \xi)$ is an admissible and solenoidal test function for $(v_1, \eta_1)$ if $(w, \xi)$ is an admissible and solenoidal  test function  for $(v_2, \eta_2)$ and  $(\check{u},\xi)$ is an admissible and solenoidal test function for $(v_1,\eta_1)$  if $(u,\xi)$ is an admissible and solenoidal for $(v_2,\eta_2)$.

\begin{lem}
	\label{testf}
	Let $w\in  L^1(0,T;W^{1,q}(\Omega_2; \R^3)), u: [0,T] \to \Omega_1$ (sufficiently smooth). The following holds
	\begin{enumerate}
		\item 
		If $\di w=\di u=0$ then $\di \hat{w}=\di \check{u}=0$. 
		\item
		$u^3(t,x,\eta_2(t,x))=\hat{u}^3(t,x,\eta_1(x))$, $u^3(t,x, \eta_1(x)) = \check{u}^3(t,x,\eta_2(x))$.
		\item 
		$(u-\hat{w})\circ \psi^{-1}=\gamma\tilde{J}^{-1}(\check{u}-w)$ and $(\check{u}-w) \circ \psi=\gamma^{-1} J(u-\hat{w})$
	\end{enumerate}
\end{lem}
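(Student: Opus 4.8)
The plan is to verify each of the three claims by direct computation using the explicit formulas for $\psi$, $J$, $\tilde J$, and the definitions $\hat w = \gamma J^{-1}(w\circ\psi)$, $\check u = \gamma^{-1}\tilde J\, u\circ\psi^{-1}$. The key algebraic facts to record first are that $\det J = \gamma$, that $J^{-1} = \begin{pmatrix}\mathbb{I}_2 & 0\\ -y\gamma^{-1}\nabla\gamma & \gamma^{-1}\end{pmatrix}$ (and similarly $\tilde J^{-1}$ with $\gamma^{-1}\nabla\gamma$ replaced appropriately), and the chain-rule identity relating $\nabla_z(w\circ\psi)$ to $(\nabla w)\circ\psi$ composed with $J$. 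These are the same change-of-variables relations already used in the proof of Lemma~\ref{lem:molly}, so I would lean on that computation.

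For part (1), the cleanest route is the Piola-transform identity: for a matrix field of the form $\gamma J^{-1}$ pulled back by $\psi$, one has $\di_z\big(\gamma J^{-1}(w\circ\psi)\big) = \gamma\,(\di w)\circ\psi$ up to the column-divergence $\di(\gamma J^{-1})$ term, and the point is precisely that $\gamma J^{-1}$ has divergence-free columns because $J$ is a Jacobian (so the rows of $\operatorname{cof} J$ are divergence free — the classical Piola identity). Concretely I would expand $\di\hat w$ exactly as the analogous computation for $\di\phi_\delta$ in Lemma~\ref{lem:molly}: writing out the three components, the cross terms involving $y\nabla\gamma$ and $\gamma^{-1}\nabla\gamma$ cancel and one is left with $\gamma$ times $(\di_x w' + \partial_y w^3)\circ\psi = \gamma(\di w)\circ\psi = 0$. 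The computation for $\check u$ is identical with $\psi$ replaced by $\psi^{-1}$ and $\gamma$ by $\gamma^{-1}$.

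For part (2), this is the statement that the transforms are designed to preserve the normal (third) component at the moving interface. Since the first two rows of $J^{-1}$ and $\tilde J^{-1}$ are $(\mathbb I_2,0)$, only the third component of $\hat u$ mixes; I compute $\hat u^3(t,x,\eta_1(x)) = \gamma\big(-\eta_1\gamma^{-1}\nabla\gamma\cdot u'\circ\psi + \gamma^{-1}u^3\circ\psi\big)$ evaluated at $y=\eta_1(x)$, i.e.\ at the point $\psi(x,\eta_1(x)) = (x,\gamma\eta_1) = (x,\eta_2(x))$. Because $u'$ (here the relevant tangential trace) vanishes at the interface — this is exactly the hypothesis built into $\mathcal V_\eta$, namely $u'(x,\eta(x))=0$ — the first term drops and $\hat u^3(t,x,\eta_1(x)) = u^3(t,x,\eta_2(x))$. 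The second identity follows symmetrically. I should be slightly careful here about which vanishing-trace hypothesis I am invoking (it is the no-slip condition $v'=0$ on $B_c$ together with $v'\circ\psi = 0$ read off from $v\circ\psi=(0,\partial_t\eta)^T$), and I would state it explicitly.

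For part (3), I just substitute the definitions: $(u - \hat w)\circ\psi^{-1} = u\circ\psi^{-1} - \gamma J^{-1}\circ\psi^{-1} (w\circ\psi\circ\psi^{-1}) = u\circ\psi^{-1} - \gamma\tilde J^{-1} w$, and since $\check u = \gamma^{-1}\tilde J\, u\circ\psi^{-1}$ we get $u\circ\psi^{-1} = \gamma\tilde J^{-1}\check u$, whence $(u-\hat w)\circ\psi^{-1} = \gamma\tilde J^{-1}(\check u - w)$; applying $\psi$ and multiplying by $\gamma^{-1}J$ gives the second identity. The only genuine obstacle I anticipate is bookkeeping: keeping straight the distinction between $J$ and $\tilde J = J\circ\psi^{-1}$, and between evaluation at $(t,x,y)$ versus at $\psi^{-1}(t,x,y)$, so that the chain rule is applied consistently; the Piola cancellation in (1) is the one place where a sign or factor error would not be self-correcting, so I would double-check it against the $\di\phi_\delta$ computation already carried out above. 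The regularity hypothesis $w\in L^1(0,T;W^{1,q}(\Omega_2))$ is only needed so that the divergences and traces make classical sense; for $u$ I work with smooth representatives and pass to the limit, which is why the statement only asks for $u$ ``sufficiently smooth.''
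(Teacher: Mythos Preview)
Your approach is essentially identical to the paper's: direct computation of $\gamma J^{-1}$ and $\gamma^{-1}\tilde J$, expansion of $\di\hat w$ to see the cross-terms cancel leaving $\gamma(\di w)\circ\psi$, and for part (3) the key identity $J^{-1}\circ\psi^{-1}=\tilde J^{-1}$ followed by straightforward substitution. Your treatment of part (2) is in fact more careful than the paper's: the paper writes only ``As $\psi(x,\eta_1)=(x,\eta_2)$ this directly yields the second assertion,'' but as you correctly note, the term $-y\nabla\gamma\cdot w'\circ\psi$ in $\hat w^3$ only drops at $y=\eta_1$ because $w'(x,\eta_2(x))=0$, which is an implicit hypothesis (always satisfied in the applications to $v_1,v_2\in\mathcal V_S$ and test functions in $\mathcal V_T$) that the lemma statement does not make explicit.
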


\begin{proof}
	We calculate
	$$\gamma J^{-1}=\begin{pmatrix}
	\gamma \mathbb{I}_2 & 0\\
	-y \nabla \gamma & 1
	\end{pmatrix}, \quad \gamma^{-1}\tilde{J}=\begin{pmatrix}
	\gamma^{-1} \mathbb{I}_2 & 0\\
	y\gamma^{-2}\nabla \gamma & 1
	\end{pmatrix}=\begin{pmatrix}
	\gamma^{-1} \mathbb{I}_2&0\\
	-y\nabla (\gamma^{-1})&1
	\end{pmatrix}. $$
	Thus it is sufficient to prove (1) and (2) for $\hat{w}$ as for $\check{u}$ we just have to replace $\gamma$ by $\gamma^{-1}$ everywhere. We get
	$$\hat{w}=(\gamma w' \circ \psi, -y \nabla \gamma \cdot w'  \circ \psi+w^2\circ \psi),$$
	As $\psi(x, \eta_1)=(x,\eta_2)$ this directly yields the second assertion.
		For the divergence we find
	$$\di_x \hat{w}' =\nabla \gamma \cdot w'  \circ \psi+\gamma \di_x (w'  \circ \psi)=\nabla \gamma \cdot w'  \circ \psi+\gamma((\di_x w' ) \circ \psi+(\partial_y w' ) \circ \psi) \cdot y\nabla \gamma)$$
	and using $\partial_y(w \circ \psi)=\gamma (\partial_y w) \circ \psi$
	$$\partial_y \hat{w}^2=-\nabla \gamma \cdot w' \circ \psi+\gamma(-y\nabla\gamma \cdot (\partial_y w' ) \circ \psi+(\partial_y w^2) \circ \psi).$$
	Thus $\di w_1=0$ gives
	$\di \hat{w}=\gamma (\di_x w) \circ \psi=0$.
	For (3) note first that
	$$
	J^{-1} \circ \psi^{-1}=\begin{pmatrix}
	\mathbb{I}_2&0\\
	-y\gamma^{-2}\nabla \gamma&\gamma^{-1}
	\end{pmatrix}=\tilde{J}^{-1}$$
	This gives
	$$
	(u-\hat{w})\circ \psi^{-1}=u\circ \psi^{-1}-\gamma (J^{-1} \circ \psi^{-1}) w=\gamma\tilde{J}^{-1}(\gamma^{-1}\tilde{J}u\circ \psi^{-1}-w)=\gamma \tilde{J}^{-1}(\check{u}-w).$$
	Lastly
	$$(\check{u}-w)\circ \psi=\gamma^{-1}J u-w \circ \psi=\gamma^{-1}J((u-\hat{w})).$$
\end{proof}
For notational purposes set
\begin{align*}\eta_1-\eta_2&=\eta ,\quad w_1=v_1-\hat{v}_2, \quad w_2=\check{v}_1-v_2.\\
v_2 \circ \psi&=\tilde{v}_2, \quad v_1 \circ \psi^{-1}=\tilde{v}_1, \quad w_2 \circ \psi=\tilde{w}_2, \quad w_1 \circ \psi^{-1}=\tilde{w}_1, \quad \tilde{f}_2=f_2 \circ \psi
\end{align*}
Note that by Lemma \ref{testf}
\begin{equation}
\label{eq:what}
\tilde{w}_2=\gamma^{-1}Jw_1, \quad \tilde{w}_1=\gamma \tilde{J}^{-1}w_2,
\end{equation}
and with a slight missuse of notation. 
$$
\check{v}_{1,\delta}=\gamma^{-1}\tilde{J}v_{1,\delta} \circ \psi^{-1},\quad \hat{v}_{2,\delta}=\gamma J^{-1}v_{2,\delta}\circ \psi, \quad w_{2,\delta}=\check{v}_{1,\delta}-v_{2,\delta}, \quad w_{1,\delta}=v_{1,\delta}-\hat{v}_{2,\delta}.$$

Note that by Lemma \ref{lem:molly} $\di v_{2,\delta}=\di v_{1,\delta}=0$ and $v_{2,\delta}(x,\eta_2(x))=(0,\partial_t \eta_{2,\delta})$, $v_{1,\delta}=(0,\partial_t \eta_{1,\delta})$. Thus by Lemma \ref{testf} $\di \hat{v}_{2,\delta}=\di \check{v}_{1,\delta}=0$ and $\hat{v}_{2,\delta}(x, \eta_1(x))=\partial_t\eta_{2,\delta}$, $\check{v}_1(x,\eta_2(x))=\partial_t\eta_{1,\delta}$ as well as $\di w_{1,\delta}=\di w_{2,\delta}=0$ and $w_{1,\delta}(x,\eta_1(x))=w_{2,\delta}(x,\eta_2(x))=\partial_t \eta_\delta$.

\subsection{A-priori estimates}
Before we turn to the main argument we collect some results that show that our test-functions are admissible and that the error terms due to the geometric convolution in time are converging to 0.

\begin{rem}
	\label{rem1}
	The following estimates we will use frequently in the following. They are consequences of H\"older's inequality and the imbeddings $H^1(\omega)\hookrightarrow L^p(\omega)$ ($p \in [1,\infty)$) and in case $q<3$, that $W^{1,q}(\Omega_i) \hookrightarrow L^r(\Omega_i)$ for all $r<3q/(3-q)$) ($i=1,2$ here and in the following). See \cite{LenRuz14} for a reference.
	\begin{enumerate}
		\item 
		For all $s\in (1,\infty)$,  $p \in [1,s)$ and $f \in L^{s}(\Omega_i)$, $g \in H^1(\omega)$
		$$
		\|fg\|_{L^p(\Omega_i)} \le C\|f\|_{L^s(\Omega_i)}\|g\|_{H^1(\omega)}.$$
		\item 
		For all $p \in (1,2)$, $q\in(6p/(6-p),3)$, $f \in W^{1,q}(\Omega_i)$ and $g \in L^2(\Omega_i)$ 
		$$\|fg\|_{L^p(\Omega_i)} \le C\|f\|_{W^{1,q}(\Omega_i)}\|g\|_{L^2(\Omega_i)}.$$
		\item 
		If $p,q$, $f$ are as above and $g \in H^2(\omega)$ 1. and 2. give in particular
		$$
		\|fg\|_{W^{1,p}(\Omega_i)} \le C \|f\|_{W^{1,q}(\Omega_i)}\|g\|_{H^2(\omega)}.
		$$
	\end{enumerate}
\end{rem}

\begin{lem}
	\label{liste}
	Let $(v_1, \eta_1), (v_2,\eta_2) \in \mathcal{V}_S$ weak solutions of \textit{FSI}, $(v_2, \eta_2)$ satisfying the additional regularity assumptions. Then
	\begin{enumerate}
		\item
		$\gamma$ satisfies the following estimates for a.e.\ $t\in [0,T]$.
		$$
		\|\gamma(t)-1\|_{H^2(\omega)} \le C\|\eta(t)\|_{H^2(\omega)}
		\quad  \| \partial_t \gamma(t) \|_{L^2(\omega)} \le C\|\partial_t\eta(t)\|_{L^2(\omega)}+C\|\eta(t)\|_{L^2(\omega)}.
		$$
		The same estimates hold for $\gamma^{-1}$.
		\item
		$\nabla \gamma\in L^\infty(0,T;L^q(\omega))$ for all $q\in [1,\infty)$
		\[
		\norm{\nabla \gamma(t)}_{L^q(\omega)}\leq C\|\eta(t)\|_{H^2(\omega)}
		\]
		and the same holds for $\gamma^{-1}$.
		\item
		$\hat{v}_1 \in  L^\infty(0,T; L^p(\Omega_2)) \cap L^2(0,T; W^{1,p}(\Omega_2))$ for all $p \in (1,2)$  and 
		$\|\hat{v}_1\|_{W^{1,p}(\Omega_2)} \le  C \|v_1\|_{1,2} $ for all $p \in [1,2)$.
		\item $\partial_t\hat{v}\in L^2(0,T;\tilde{W}^{-1,p'}(\Omega_1))$ for all $p'\in [1,r)$,
	\end{enumerate} 
\end{lem}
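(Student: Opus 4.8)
The plan is to establish the four items in order, the first two by elementary manipulation of the quotient $\gamma=\eta_2/\eta_1$ and the last two by change-of-variables estimates built on top of items (1)--(2).

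\emph{Items (1) and (2).} I would write $\gamma-1=-\eta/\eta_1$ and, after inserting $\pm\eta_1\partial_t\eta_1$ (resp.\ $\pm\eta_1\nabla\eta_1$) in the numerators coming from the quotient rule, obtain the identities $\partial_t\gamma=-\partial_t\eta/\eta_1+\eta\,\partial_t\eta_1/\eta_1^2$ and $\nabla\gamma=-\nabla\eta/\eta_1+\eta\,\nabla\eta_1/\eta_1^2$. Since $\eta_1$ is bounded from below by hypothesis and (by $H^2(\omega)\hookrightarrow C^0(\bar\omega)$ in two space dimensions) from above by the energy bound, the functions $\eta_1^{-1},\eta_1^{-2}$ are bounded in $H^2(\omega)$ by a constant depending only on the lower bound and on the energy; as $H^2(\omega)$ is a Banach algebra in dimension two, multiplying by $\eta$ yields $\|\gamma(t)-1\|_{H^2(\omega)}\le C\|\eta(t)\|_{H^2(\omega)}$, and the same for $\gamma^{-1}=\eta_1/\eta_2$ by interchanging the roles of $\eta_1$ and $\eta_2$ (which is likewise bounded below and in $H^2$). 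The $L^2(\omega)$-bound on $\partial_t\gamma$ follows from the first displayed identity together with $\partial_t\eta_1\in L^\infty(0,T;L^2(\omega))$ and the embedding $H^2(\omega)\hookrightarrow L^\infty(\omega)$ applied to $\eta$; for (2) one uses in addition $\nabla\eta_1\in L^\infty(0,T;H^1(\omega))\hookrightarrow L^\infty(0,T;L^q(\omega))$ and $H^2(\omega)\hookrightarrow W^{1,q}(\omega)$ for every finite $q$.

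\emph{Item (3).} I would write the push-forward of $v_1$ onto $\Omega_2$ as $\check v_1=\gamma^{-1}\tilde J\,(v_1\circ\psi^{-1})$ and use that, for a.e.\ $t$, $\psi$ is a bi-Lipschitz diffeomorphism whose Jacobian determinant $\gamma$ is bounded above and below, so that composition with $\psi^{-1}$ preserves $L^2$- and (via $H^1(\Omega_1)\hookrightarrow L^6(\Omega_1)$) $L^6$-integrability up to constants, with $\nabla(v_1\circ\psi^{-1})=((\nabla v_1)\circ\psi^{-1})\,\tilde J^{-1}$. The matrix $\gamma^{-1}\tilde J$ and its spatial derivative involve only $\gamma^{\pm1}$ (bounded), $\nabla\gamma$ (in $L^q(\omega)$ for all finite $q$ by~(2)) and $\nabla^2\gamma$ (in $L^2(\omega)$, and even in $L^{q_0}(\omega)$ with some $q_0>2$ for a.e.\ $t$ by Theorem~\ref{thm:boris}); feeding these together with $v_1\in L^\infty(0,T;L^2(\Omega_1))\cap L^2(0,T;H^1(\Omega_1))$ into H\"older's inequality and the Sobolev embeddings, precisely the product estimates tabulated in Remark~\ref{rem1} and Lemma~\ref{lem:interpol}, gives $\check v_1\in L^\infty(0,T;L^p(\Omega_2))\cap L^2(0,T;W^{1,p}(\Omega_2))$ together with $\|\check v_1\|_{W^{1,p}(\Omega_2)}\le C\|v_1\|_{1,2}$ for the admissible $p<2$; the restriction $p<2$ arises from pairing the merely $L^2$-bounded factors $\nabla v_1$, resp.\ $\nabla^2\gamma$, with the only-$L^q$ factors.

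\emph{Item (4), the main difficulty.} Here one must transport the hypothesis $\partial_t v_2\in L^2(0,T;\tilde W^{-1,r}(\Omega_2))$ over to $\hat v_2=\gamma J^{-1}(v_2\circ\psi)$ on $\Omega_1$. I would fix $t$, test $\partial_t\hat v_2$ against an arbitrary $\psi\in W^{1,(p')'}(\Omega_1)$ with $\psi=0$ on $B_c$, and expand by the product and chain rules, recalling $\hat v_2=(\gamma\,v_2'\circ\psi,\,-y\nabla\gamma\cdot v_2'\circ\psi+v_2^3\circ\psi)$ and $\partial_t\psi=(0,0,y\partial_t\gamma)$; this produces four kinds of terms: (a) one carrying $(\partial_t v_2)\circ\psi$; (b) one carrying $(\nabla v_2)\circ\psi$ contracted with $y\partial_t\gamma$; (c) terms carrying $\partial_t\gamma$ with no further spatial derivative; and (d) a term carrying $y\,\partial_t\nabla\gamma$. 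Terms~(c) are controlled at once by $\partial_t\gamma\in L^\infty(0,T;L^2(\omega))$ and Remark~\ref{rem1}; term~(b) by $v_2\in L^r(0,T;W^{1,s}(\Omega_2))$ with $s>3$ against $\partial_t\gamma\in L^\infty(0,T;L^2(\omega))$ and the embedding of the test space (here $s>3$ is what makes the H\"older triple close); and term~(a) is handled by composing the test function with $\psi$, absorbing the (bounded, in $\gamma$-dependent norms) transformation matrices and the Jacobian exactly as in the treatment of $II_4$ in the proof of Lemma~\ref{lem:molly}, then invoking the hypothesis on $\partial_t v_2$ — this is the step where integrability is lost, forcing $p'<r$. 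The genuinely dangerous term is~(d): since $\nabla\partial_t\gamma$ is not controlled (one only has $\partial_t\gamma\in L^\infty L^2(\omega)$ and, via Theorem~\ref{thm:boris}, $\partial_t\eta_i\in L^2(0,T;H^\sigma(\omega))$ with $\sigma<\tfrac12$), it cannot be estimated directly; instead one integrates by parts in the horizontal variables $x_i$, using $v_2^i(\cdot,x,\eta_2(x))=0$ for $i=1,2$ so that the boundary contributions at $y=\eta_1(x)$ vanish, thereby moving the horizontal derivative off $\partial_t\gamma$ and onto $y\,v_2^i\circ\psi$ and $\psi$, exactly the manoeuvre carried out for the term $(I)$ in the proof of Lemma~\ref{lem:molly}. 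After that the term is bounded by $\partial_t\gamma\in L^\infty L^2(\omega)$ together with $v_2\in L^r(0,T;W^{1,s}(\Omega_2))$ and $|\nabla v_2||\nabla\eta_2|\in L^2(0,T;L^q(\Omega_2))$ for $q<s$, and collecting (a)--(d) yields $\partial_t\hat v_2\in L^2(0,T;\tilde W^{-1,p'}(\Omega_1))$ for every $p'\in[1,r)$. The recurring labour in all four items is the sharp matching of Lebesgue exponents so that each product lands in the claimed space; the one conceptually non-routine point is the integration by parts in~(d) that circumvents the missing regularity of $\partial_t\nabla\gamma$.
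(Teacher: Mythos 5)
Your plan tracks the paper's own proof closely on items (1), (2), and (4). For (1)--(2) you write the quotient-rule identities and appeal to $H^2(\omega)$ being a Banach algebra in 2D and to the embeddings $H^2\hookrightarrow W^{1,q}\hookrightarrow L^\infty$; the paper expands the same identities term by term and then uses the same embeddings, so these are equivalent. For (4) your decomposition into the four types (a)--(d) is a finer-grained version of the paper's three-term split, but you isolate the same dangerous term $y\,\partial_t\nabla\gamma$ and handle it by exactly the paper's manoeuvre: integrate by parts in the horizontal variables and use that $v_2'(t,x,\eta_2(x))=0$ to kill the boundary contribution. That is the non-routine step and you have it.

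For (3) there is a genuine gap. You propose to control the critical product $|\nabla^2\gamma|\,|v_1\circ\psi^{-1}|$ using $\nabla^2\gamma\in L^2(\omega)$ (or ``even in $L^{q_0}(\omega)$ with $q_0>2$ for a.e.\ $t$'' via Theorem~\ref{thm:boris}) together with Remark~\ref{rem1} and Lemma~\ref{lem:interpol}. This is not enough to reach the claimed range $p\in[1,2)$: a direct H\"older pairing of $\nabla^2\gamma\in L^2(\omega)$ (extended constantly in $y$) against $v_1\in L^6(\Omega_1)$ only gives $L^{3/2}(\Omega_1)$, and Remark~\ref{rem1}(2) has the constraint $q<3$ on $f\in W^{1,q}$, so with $f=v_1\in W^{1,2}$ it again caps out at $p<3/2$. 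Moreover the bound $\nabla^2\gamma(t)\in L^{q_0}(\omega)$ for $q_0>2$ from Theorem~\ref{thm:boris} is only $L^2$ in time, so it is not available for the pointwise-in-$t$ estimate $\|\check v_1(t)\|_{W^{1,p}(\Omega_2)}\le C\|v_1(t)\|_{1,2}$. The paper closes this gap with the vertical-integration trick: it invokes Lemma~\ref{lem:convective}, which says $\int_0^{\eta_1(t,x)}|v|\,\mathrm{d}y\in L^2(0,T;L^q(\omega))$ for every finite $q$, and uses this together with item~(2) to establish the intermediate bound \eqref{eq:etav1}, namely $|\nabla^2\gamma|\,|v_1|\in L^2(0,T;L^p(\Omega_1))$ for all $p\in[1,2)$. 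The point, announced just before Lemma~\ref{lem:convective}, is that $\nabla^2\gamma$ depends only on $x\in\omega$ while $v_1$ depends on $(x,y)$, so integrating the product over $\Omega_1$ lets you integrate in $y$ first and gain horizontal regularity. Your write-up does not mention this lemma or the trick behind it, and without it the range $p<2$ in item (3) is not justified. Everything else in your proposal is consistent with what the paper does.
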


\begin{proof}
	(1) and (2):
	
	\noindent
	It holds
	\begin{align*}
	\gamma-1=&\frac{\eta_2-\eta_1}{\eta_1} \le C|\eta|\\
	\gamma_t&=\frac{\partial_t\eta_2 \eta}{\eta_1^2}-\frac{\eta_2\partial_t\eta}{\eta_1^2} \le C(|\partial_t \eta_2||\eta|+|\partial_t\eta|),\\
	\nabla\gamma&=\frac{\nabla\eta_2\eta}{\eta_1^2}-\frac{\eta_2\nabla\eta}{\eta_1^2} \le C(|\nabla\eta_2||\eta|+|\nabla\eta|),\\
	\partial_{x_ix_j}^2\gamma& = \eta_1^{-2}(\partial_{x_ix_j}^2 \eta_2 \eta+\partial_{x_j}\eta_2\partial_{x_i}\eta-\partial_{x_i}\eta_2\partial_{x_j}\eta-\eta_2\partial_{x_ix_j}^2\eta)-2\frac{\partial_{x_i}\eta_1}{\eta_1^3}\partial_{x_j}\gamma\\
	& \le C(|\nabla^2 \eta_2||\eta|+|\nabla \eta_2||\nabla \eta|+|\nabla^2\eta|+|\nabla \eta_1|(|\nabla\eta_2||\eta|+|\nabla\eta|)
	\end{align*}
(1) and (2) now follow from the embeddings $H^2(\omega) \hookrightarrow W^{1,q}(\omega) \hookrightarrow L^\infty(\omega)$ for all $q \in [1, \infty)$. The results for $\gamma^{-1}$ follow by replacing the roles of $\eta_1$ and $\eta_2$

%
%
	

\newpage
\noindent
Proof of (3):

\noindent
We calculate
	$$
	\partial_{x_i}(\gamma^{-1}\tilde{J})=\begin{pmatrix}
	\mathbb{I}_2 \partial_{x_i}(\gamma^{-1}) &0 \\
	-y\partial_{x_i}\nabla(\gamma^{-1}) & 0
	\end{pmatrix}, \quad
	\partial_y (\gamma^{-1} \tilde{J})=\begin{pmatrix}
	\mathbb{I}_20&  0\\
	-\nabla (\gamma^{-1}) & 0
	\end{pmatrix}
	$$
	Hence
	\begin{equation}
	\label{J}
	|\partial_{x_i}(\gamma^{-1}\tilde{J})|+|\partial_{y}(\gamma^{-1} \tilde{J})| \le C(|\nabla (\gamma^{-1})|+|\nabla (\gamma^{-1})|^2+|y\nabla^2 (\gamma^{-1})|)
	\end{equation}
Observe further, that by Lemma~\ref{lem:convective}
	$\int_{0}^{\eta_1 (t,x)}\abs{v}\, dy \in L^2(0,T;L^q({\omega}))$ for all $q\in [1,\infty)$, which implies (using also (2)) that 
	\begin{align}
	\label{eq:etav1}
	\begin{aligned}
		&\abs{\nabla^2\gamma}\abs{v_1}\in L^2(0,T;L^p(\Omega_1))\text{ and }\abs{\nabla^2\gamma}\abs{\tilde{v}_1}\in L^2(0,T;L^p(\Omega_2))\text{ for all }p\in [1,2)
	\end{aligned}
	\end{align}
	
	Now by \eqref{J}
	$$|\partial_{z_i}(\gamma^{-1}J \tilde{v}_1)| \le  C(|\nabla (\gamma^{-1})|+|\nabla (\gamma^{-1})|+|\nabla^2 (\gamma^{-1})||\tilde{v}_1|+|\nabla (\gamma^{-1})||(\nabla v_1) \circ \psi^{-1}|$$
	Thus the assertion for $\hat{v}_1$ follows using also (1), (2) and Remark \ref{rem1}.
	
	\noindent
	Proof of (4):
	
%

\noindent
This estimate is analogous to (3) in Lemma~\ref{lem:molly}:
Let us take $\psi\in \tilde{W}^{1,p'}({\omega}\times \R)$, such that $\psi(t,x,y)=0$ for all $x\in B_c$ and $\norm{\psi}_{W^{1,p'}([0,T]\times{\omega}\times \R)}\leq 1$ to find that 
	\begin{align*}
	\int_0^T(\partial_t \hat{v}_2,\psi)\, dt
&= \int_0^T(\partial_t(\gamma J^{-1})\tilde{v}_2, \psi)\,dt
 +\int_0^T\skp{J^{-1}\partial_tv_2}{ \psi}_{\eta_2}\, dt
 +\int_0^T\int_{\Omega_1} \gamma J^{-1} \partial_3 v_2 \partial_t \gamma \cdot \psi \, dz\,  dt
	\end{align*}
	The estimates on the first and the third term are now straight forward using the assumptions on $v_2$. In the first term it is important to observe that the terms involving $\partial_t \nabla \gamma$ are always coupled to $v_2'$. Using the fact that $v_2'(t,x,\eta_2(t,x))= 0$ for all $(t,x)\in [0,T]\times \omega$, we may use integration  by parts in $x$ direction and find
	\begin{align*}
	(\partial_t(\gamma J^{-1})\tilde{v}_2, \psi) \leq C\int_{\Omega_1}\abs{\partial_t\gamma}(\abs{\nabla \gamma}\abs{\nabla \tilde{v}_2}\abs{\psi}+\norm{v_2}_{L^\infty(\Omega_2)}\abs{\tilde{v}_2}\abs{\nabla \psi}),
	\end{align*}
But these expression can be estimated using that $p^*=\frac{3p}{3-p}$ can be assumed to be close enough to 6 such that
\begin{align*}
&\int_0^T(\partial_t(\gamma J^{-1})\tilde{v}_2, \psi)\,dt
\\
&\quad  \leq C\int_0^T\norm{\partial_t\gamma}
\big(\norm{\abs{\nabla \tilde{v}_2}\abs{\nabla \gamma}}_{3+(3-s)/2}\norm{\psi}_{{p^*}}+\norm{v_2}_{W^{1,s}(\Omega_2)}\norm{\psi}_{{1,p}}\big)\, dt.
\end{align*}
This expression is bounded since $\partial_t\eta\in L^\infty(L^2)$ and $\abs{\nabla \gamma}\abs{\nabla \tilde{v}_2}\in L^2{(0,T;L^q(\Omega_1))}$ for all $q\in [3,s)$.
\end{proof}

At this point we choose $t\in [0,T]$ such that all involved quantities do have a Lebesgue point at this time instance. Without any further notice we extend all quantities via \eqref{extend} constant on $(-\infty,0]$ and $[t,\infty)$.

Next we take the convolution introduced in Lemma~\ref{lem:molly} on $w_2$ and $\hat{v}_2$. We will need the following convergences:
\begin{lem}
	\label{liste3}
	The following expressions are all well defined and convergence to zero with $\delta \to 0$:
	\begin{align}
	\label{con1}
	&\int_0^t \skp{\partial_t v_2}{w_2-w_{2,\delta}}_{\eta_2}+\skp{ [\nabla v_2] v_2}{ w_2-w_{2,\delta}}_{\eta_2}+\skp{\nablasym v_2}{ \nablasym w_2-\nablasym w_{2,\delta})}_{\eta_2}~dt0\\
	\label{con2}
	&{\int_0^t (v_1 \otimes v_1,\nabla \hat{v}_{2}-\nabla \hat{v}_{2,\delta})~dt }
	\\
	\label{con3}
		&{(v_1(t),\hat{v}_2(t)-\hat{v}_{2,\delta}(t))-\int_0^t (v_1,\partial_t \hat{v}_2-\partial_t\hat{v}_{2,\delta})- (\nablasym v_1,\nablasym \hat{v}_2-\nablasym \hat{v}_{2,\delta})~dt }.
	\end{align}
	Moreover, $(\partial_t\eta_{\delta},\hat{v}_{2,\delta})$ is a valid testfunction for the weak formulation of $(\eta_1,v_1)$ and the terms

\noindent
$\skp{\partial_t v_2}{w_{2,\delta}}_{\eta_2}$, $\skp{\nablasym v_2}{\nablasym w_{2,\delta}}_{\eta_2}$, $\skp{[\nabla v_2] v_2 }{w_{2,\delta}}_{\eta_2} \in L^1(0,T)$ uniformly in $\delta$.
\end{lem}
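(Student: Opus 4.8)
The plan is to treat each of the three displayed expressions \eqref{con1}--\eqref{con3} separately and in each case to isolate the ``bad'' factor and apply the convergence statements of Lemma~\ref{lem:molly} together with the a-priori bounds collected in Lemma~\ref{liste}. Throughout, the guiding principle is: $w_{2,\delta}$ (respectively $\hat{v}_{2,\delta}$) is obtained from $w_2$ (respectively $\hat{v}_2$) by applying the geometric-in-time mollification of Lemma~\ref{lem:molly}, and we need to match each factor paired with $w_2-w_{2,\delta}$ against the function space in which the mollification converges \emph{strongly} (for terms with no derivative on the mollified function) or \emph{weakly} (for terms where a gradient or a time-derivative hits the mollified function). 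First I would verify the hypotheses of Lemma~\ref{lem:molly} apply to $w_2=\check v_1-v_2$: by Lemma~\ref{testf}(1) it is solenoidal, its trace on $\{y=\eta_2\}$ equals $(0,\partial_t\eta_1-\partial_t\eta_2)=(0,\partial_t\eta)$ which lies in $L^2(0,T;L^1(\omega))$ by the energy bounds, and $\check v_1\in L^2(0,T;W^{1,p}(\Omega_2))$ for all $p\in(1,2)$ by Lemma~\ref{liste}(3) while $v_2\in L^r(0,T;W^{1,s}(\Omega_2))\subset L^2(0,T;H^1(\Omega_2))$, so $w_2$ lies in both regularity classes (1) and (2) of Lemma~\ref{lem:molly}; similarly $\hat v_2$ satisfies (2) and, by Lemma~\ref{liste}(4) together with the assumption $\partial_t v_2\in L^2(0,T;\tilde W^{-1,r}(\Omega_2))$, also (3).

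For \eqref{con1}: the term $\skp{\nablasym v_2}{\nablasym(w_2-w_{2,\delta})}_{\eta_2}$ is handled by writing it as a duality pairing of $\nabla v_2\in L^2(0,T;L^2(\Omega_2))$ against $\nabla(w_2-w_{2,\delta})$, which tends to zero weakly in $L^2(0,T;H^1(\Omega_2))$ by Lemma~\ref{lem:molly}(2) (here the full $H^1$ convergence is available precisely because $s>3$). The term $\skp{[\nabla v_2]v_2}{w_2-w_{2,\delta}}_{\eta_2}$ uses that $[\nabla v_2]v_2\in L^1(0,T;L^1(\Omega_2))$—in fact by Lemma~\ref{lem:interpol}/Remark~\ref{rem1} in a slightly better space—paired against $w_2-w_{2,\delta}\to0$ strongly in $L^2(0,T;L^p(\Omega_2))$ for $p<2$, after checking the H\"older exponents close; more carefully one uses $\nabla v_2\in L^2(L^2)$ and $v_2\in L^\infty(L^2)\cap L^2(L^6)$ so the product is in $L^2(0,T;L^{3/2})$ and pairs against the strong $L^2(L^3)$ convergence of $w_2-w_{2,\delta}$. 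Finally $\skp{\partial_t v_2}{w_2-w_{2,\delta}}_{\eta_2}$ is the delicate one: here $\partial_t v_2\in L^2(0,T;\tilde W^{-1,r}(\Omega_2))$, so we need $w_2-w_{2,\delta}\to0$ in $L^2(0,T;\tilde W^{1,r'}(\Omega_2))$ with the trace-compatibility built in—but $r'<2$ and $w_2-w_{2,\delta}$ does converge weakly in $L^2(0,T;W^{1,p}(\Omega_2))$ for all $p<2$ by Lemma~\ref{lem:molly}(1), and since $\partial_t v_2(t)$ annihilates gradients of solenoidal test functions (Proposition~\ref{pro:time}) while $w_2-w_{2,\delta}$ is solenoidal with matching trace, the pairing is legitimate and the weak convergence suffices. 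The expressions \eqref{con2} and \eqref{con3} are treated the same way, now transported to $\Omega_1$ via $\psi$ and using Lemma~\ref{liste}(3),(4): for \eqref{con2} one uses $v_1\otimes v_1\in L^2(0,T;L^{3/2}(\Omega_1))$ paired against $\nabla(\hat v_2-\hat v_{2,\delta})\to0$ weakly in $L^2(L^2)$; for \eqref{con3} the $L^2(\Omega_1)$-endpoint term and the $\nabla v_1$-term are handled by the $L^\infty(L^2)$ and $L^2(H^1)$ strong/weak convergence respectively, while $\int_0^t(v_1,\partial_t\hat v_2-\partial_t\hat v_{2,\delta})$ uses Lemma~\ref{lem:molly}(3) giving $\partial_t\hat v_{2,\delta}\to\partial_t\hat v_2$ weakly in $L^2(0,T;\tilde W^{-1,p'}(\Omega_1))$ tested against $v_1\in L^2(0,T;W^{1,p'}(\Omega_1))$ (which is more than enough since $v_1\in H^1$). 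The boundary value $v_1(t)\in L^2(\Omega_{\eta_1(t)})$ is finite at the chosen Lebesgue point $t$, and $\hat v_2(t)-\hat v_{2,\delta}(t)\to0$ weakly in $L^2(\Omega_1(t))$ by the weak continuity in time underlying the mollification, so that endpoint term also vanishes.

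For the admissibility claim, I would check directly that $(\partial_t\eta_\delta,\hat v_{2,\delta})\in\mathcal V_T$: by Lemma~\ref{lem:molly} and Lemma~\ref{liste}, $\hat v_{2,\delta}\in\mathcal V_F$, is solenoidal, has trace $(0,\partial_t\eta_{2,\delta})$ on $\{y=\eta_1\}$, and—crucially—$\partial_t\hat v_{2,\delta}\in L^2(0,T;L^2(\Omega_1))$ because the mollification makes the time derivative smooth (each $j_\delta$ contributes a bounded factor $\|\partial_t j_\delta\|_{L^1}$), while $\partial_t\eta_\delta=\partial_t\eta_{1,\delta}-\partial_t\eta_{2,\delta}\in W^{1,\infty}(0,T;L^2(\omega))\cap L^\infty(0,T;H^2(\omega))$ with the coupling $\hat v_{2,\delta}\circ\psi_{\eta_1}=(0,\partial_t\eta_{2,\delta})$ only giving the $\eta_2$-part, so one must instead note that the correct test-pair is $(\hat v_{2,\delta},\partial_t\eta_{2,\delta})$ for equation $(\eta_1,v_1)$ which indeed satisfies $\hat v_{2,\delta}\circ\psi_{\eta_1}=(0,\partial_t\eta_{2,\delta})$—this is exactly Lemma~\ref{testf}(2) applied to $v_{2,\delta}$. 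The uniform $L^1(0,T)$ bounds on $\skp{\partial_t v_2}{w_{2,\delta}}_{\eta_2}$, $\skp{\nablasym v_2}{\nablasym w_{2,\delta}}_{\eta_2}$, $\skp{[\nabla v_2]v_2}{w_{2,\delta}}_{\eta_2}$ follow from the same H\"older estimates as above together with the fact that mollification does not increase the relevant norms: $\|w_{2,\delta}\|_{L^2(W^{1,p})}\le C\|w_2\|_{L^2(W^{1,p})}$ for $p<2$, uniformly in $\delta$, by Lemma~\ref{lem:molly} (the kernel $K$ is bounded in the right norms by Lemma~\ref{liste}(1),(2)). The main obstacle I anticipate is the $\skp{\partial_t v_2}{w_2-w_{2,\delta}}_{\eta_2}$ term in \eqref{con1}: one must be careful that $\partial_t v_2$ only lives in the dual of the \emph{full} Sobolev space $\tilde W^{1,r}$ restricted to solenoidal fields with the right trace, and that the pairing with $w_2-w_{2,\delta}$ is well-defined and stable under the $\delta\to0$ limit precisely because the mollification of Lemma~\ref{lem:molly} preserves both solenoidality and the boundary-trace coupling—this is the whole point of introducing that operator, and it is what fails for the naive mollification. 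The second subtlety, flagged in the footnote on \cite{Gui10} in the introduction, is exactly that one may \emph{not} use a weak time-derivative lying only in the dual of the solenoidal subspace; our hypothesis $\partial_t v_2\in L^2(0,T;\tilde W^{-1,r}(\Omega_{\eta_2}))$ with $\tilde W^{-1,r}$ the full dual is what rescues the argument.
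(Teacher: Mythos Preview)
Your overall strategy—match each paired factor against the mode of convergence (strong vs.\ weak) provided by Lemma~\ref{lem:molly}—is exactly the paper's, but you mis-identify the function spaces in several places, and this causes genuine gaps for \eqref{con2} and \eqref{con3}.

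First, $w_2=\check v_1-v_2$ does \emph{not} lie in the regularity class (2) of Lemma~\ref{lem:molly}: by Lemma~\ref{liste}(3), $\check v_1\in L^2(0,T;W^{1,p}(\Omega_2))$ only for $p<2$, so $w_2$ inherits this restriction. Hence you only get $w_{2,\delta}\to w_2$ weakly in $L^2(0,T;W^{1,p}(\Omega_2))$ for $p<2$ via Lemma~\ref{lem:molly}(1), not in $L^2(H^1)$. For \eqref{con1} this is harmless—since $\nabla v_2\in L^2(0,T;L^s(\Omega_2))$ with $s>3$ (so $s'<2$), the pairing still closes—but your stated reason is wrong.

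The real trouble is with \eqref{con2} and \eqref{con3}. For \eqref{con2} you propose to pair $v_1\otimes v_1\in L^2(0,T;L^{3/2}(\Omega_1))$ against $\nabla(\hat v_2-\hat v_{2,\delta})$ converging weakly in $L^2(L^2)$. Neither half works. The exponent $3/2$ in space does not pair with $L^2$; and $\nabla\hat v_2$ contains the term $\nabla^2\gamma\cdot\tilde v_2$, where $\nabla^2\gamma\in L^\infty(0,T;L^2(\omega))$ is the limiting factor (coming from $\eta_i\in L^\infty(H^2)$), so $\hat v_2\notin L^2(W^{1,a})$ for any $a>2$ and Lemma~\ref{lem:molly}(2) does not apply to $\hat v_2$. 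The paper handles this critical term by exploiting that $\nabla^2\gamma$ is independent of $y$: one integrates $|v_1|^2$ in $y$ first and invokes Lemma~\ref{lem:convective}, which gives $\int_0^{\eta_1}|v_1|^2\,dy\in L^2(0,T;W^{1,1}(\omega))\hookrightarrow L^2(0,T;L^2(\omega))$, and then pairs this two-dimensional quantity with $\nabla^2\eta_i\in L^\infty(L^2(\omega))$ and $\|\tilde v_2\|_{L^\infty_z}$. You are missing this structural step.

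For \eqref{con3} the same obstruction blocks a direct appeal to Lemma~\ref{lem:molly}(3) on $\hat v_2$: that clause also requires $\phi\in L^2(W^{1,a})$ for some $a>3$, which $\hat v_2$ fails. The paper instead writes $\partial_t\hat v_2=\partial_t(\gamma J^{-1})\tilde v_2+\gamma J^{-1}\partial_t\tilde v_2$, transfers the second piece back to $\Omega_2$ via change of variables (so it becomes a pairing of $J^{-T}\tilde v_1\in L^2(W^{1,p}(\Omega_2))$, $p<2$, against $\partial_t v_2-\partial_t v_{2,\delta}$, and now Lemma~\ref{lem:molly}(3) \emph{does} apply to $v_2$ since $v_2\in L^2(W^{1,s})$, $s>3$), and handles the first piece by an integration-by-parts in $x$ that again avoids putting a second derivative on $\gamma$. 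Your sketch applies Lemma~\ref{lem:molly}(3) to the wrong object.
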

\begin{proof}
For \eqref{con1} we know that $w_2 \in L^2(0,T;W^{1,p}(\Omega_2)$ for all $p \in [1,2)$ by Lemma~\ref{liste}. Hence by Lemma~\ref{lem:molly} $w_2-w_{2,\delta} \to 0$ weakly in $L^2(0,T;W^{1,p}(\Omega_2))$ for all $p \in [1,2)$. Since it is a valid argument for $\partial_t v_2 \in L^2(0,T;\tilde{W}^{-1,p'}(\Omega_2))$ and since $\nabla v_2 \in L^2(0,T;W^{1,s}(\Omega_2))$ for $s>3$ it yields the convergence of the first and third term. Moreover, it was shown in Lemma~\ref{liste} (6) that  $[\nabla v_2]v_2 \in L^2(0,T;L^q(\Omega_2)$ for some $q>(6/5)$. Since we may assume $p\in [1,2)$ such that $W^{1,p}(\Omega_2) \hookrightarrow L^{q'}$ the convergence of the second term follows again from the weak convergence of $w_{2,\delta}$ in $L^2(0,T;W^{1,p}(\Omega_2))$. 
	
	 In \eqref{con2} we will show that all involved terms are uniformly bounded. The uniform bounds imply that all weakly converging sub-sequences converge to 0, by the uniqueness of the weak limits. The critical term here is
	 $\int_0^T\int_{\Omega_{\eta_1}}\abs{v_1\otimes v_1\cdot\nabla(\partial_{x_i} \gamma \tilde{v}_{2,\delta})}\,dz\, dt $. All other terms can be estimated in a straight forward manner and we skip the details.
Using the uniform bounds on $\eta_1,\eta_2,\frac{1}{\eta_1},\frac{1}{\eta_2}$ we find 
 \begin{align*}
&\int_{\Omega_1}\abs{v_1\otimes v_1\cdot\nabla(\partial_{x_i} \gamma \tilde{v}_2)}\,dz\, dt 
\\
&\quad \leq C
\int_{{\omega}}\int_{0}^{\eta_1(t,x)}\abs{v_1}^2\abs{\tilde{v_2}}\, dy\Big((\abs{\nabla \eta_1}+\abs{\nabla \eta_2})(1+\abs{\nabla \eta_2}+\abs{\nabla^2\eta_2})+\abs{\nabla \eta_2}\abs{\nabla^2\eta_1}\Big)\, dx
\\
&\quad  + C\int_{\Omega_1}\abs{v_1}^2\abs{\nabla\tilde{v_2}}\, dy\Big(1+\abs{\nabla\eta_1}^2+\abs{\nabla\eta_2}^2\Big)\, dz
=: I_1+I_2.
\end{align*}
Using Lemma~\ref{lem:convective} and H\"older's inequality in space we can estimate
\begin{align*}
I_1& \leq C \norm{v_2}_{L^\infty(\Omega_{\eta_2})}\int_{{\omega}}\int_{0}^{\eta_1(t,x)}\abs{v_1}^2\, dy(\abs{\nabla \eta_1}+\abs{\nabla \eta_2})\Big(1+\abs{\nabla^2\eta_1}+\abs{\nabla^2\eta_2}\Big)\, dx 
\\
& \leq C\norm{v_2}_{L^\infty(\Omega_{\eta_2})}(\norm{\eta_1}_{1,\infty}+\norm{\eta_2}_{1,\infty})(\norm{\eta_1}_{2,2}
+\norm{\eta_2}_{2,2}+1)\bigg\|\int_{0}^{\eta_1(t,x)}\abs{v_1}^2\, dy\bigg\|\\
& \le C \norm{v_2}_{L^\infty(\Omega_{\eta_2})}(\norm{\eta_1}_{1,\infty}+\norm{\eta_2}_{1,\infty})(\norm{v_1}^2+\norm{v_1}\norm{\nabla v_1}+\|\partial_t \eta_1\|\|\nabla \eta_1\|_{\infty})\\
& \le C (\|v_2\|_{W^{1,s}(\Omega_2)}+1)^2(\|\eta_1\|_{1,\infty}+\|\eta_2\|_{1,\infty}+1)^2(\|v_1\|^2+\|\partial_t \eta_1\|^2)+C \| v_1\|_{1,2}^2
 \end{align*}
 Since $v_2\in L^{r}(0,T;W^{1,s}(\Omega_{\eta_2}))$ for some $r>2$ and $\eta_1, \eta_2\in L^q(0,T;W^{1,\infty}(\omega))$ for all $q<\infty$ (Theorem~\ref{thm:boris})   $ \norm{v_2}_{L^\infty(\Omega_{\eta_2})}(\norm{\eta_1}_{1,\infty}+\norm{\eta_1}_{1,\infty} \in L^2([0,T])$. As additionally $v_1 \in L^\infty(0,T;L^2(\Omega_1)) \cap L^2(0,T,H^2(\Omega_1))$ and $\partial_t \eta_1 \in L^\infty(0,T;L^2(\omega))$ the last term is bounded in time.

To estimate $I_2$ note that as $v_1 \in L^2(0,T;H^1(\Omega_1)) \hookrightarrow
 L^2(0,T;L^\alpha(\Omega_1))$ for all $ \alpha \in [1,6)$ we find for all $a<3/2$ (i.e. $(\frac{2}{a})'<4$)
$$\|v_1\|_a \le \|v_1\|_2\|v_1\|_{(\frac{2}{a})'} \le \|v_1\|\|v_1\|_{1,2}$$
Now choose $p>1$, $q>3$ such that $qp <s$ and $pq'<3/2$.
\begin{align*}
I_2&\leq C (1+\|\nabla \eta_1\|_{2p'}+\|\nabla\eta_2\|_{2p'})\||\nabla \tilde{v}_2||v_1|^2\|_p\\
& \le C(\|\eta_1\|_{2,2}+\|\eta_2\|_{2,2})
\|\nabla \tilde{v}_2\|_{pq} \||v_1|^2\|_{pq'}
\le C \|v_2\|_{W^{1,s}(\Omega_2)} \|v_1\| \|v_1\|_{1,2}
\end{align*}
which is bounded in time due to the regularities on $v_2$ and $v_1$.
	We continue with \eqref{con3}. We write
	\begin{align*}
	\int_0^t (v_1,\partial_t \hat{v}_2-\partial_t\hat{v}_{2,\delta})\, dt
	&= \int_0^t (\gamma J^{-T} v_1, \partial_t \tilde{v}_{2}-\partial_t\tilde{v_{2,\delta}})\, dt+ \int_0^t (v_1, \partial_t (\gamma J^{-1}) (\tilde{v}_{2}-\tilde{v}_{2,\delta}))\, dt
	\\
	&=\int_0^t \skp{ J^{-T} \tilde{v}_1}{\partial_t v_{2}-\partial_t v_{2,\delta}}_{\eta_2}\, dt + \sum_{i=1}^2\int_0^t (v_1^i, \partial_t \gamma  \tilde{v}_{2}^i-\tilde{v}_{2,\delta}^i))\, dt 
	\\
	&\quad - \sum_{i=1}^2\int_0^t (y\partial_{x_i}\partial_t\gamma, v_1^i(\tilde{v}_{2}^3-\tilde{v}_{2,\delta}^3))\, dt
=:(i)+(ii)+(iii)
	\end{align*}
The term $(i)$ converges to 0 by Lemma~\ref{lem:molly} using that 
by an analogous estimate to Lemma~\ref{liste}, (3) we find that $ J^{-T}\tilde{v}_1\in L^2(W^{1,p}(\Omega_2)$ for all $p\in (1,2)$. The term $(ii)$ converges directly by Lemma~\ref{lem:molly} and Lemma~\ref{liste}. On the term $(iii)$ we integrate by parts  to find that
\[
\abs{(iii)}\leq \int_0^t\int_{\Omega_1}\abs{\partial_t\gamma}\abs{\nabla (v_1(\tilde{v}_2^3-\tilde{v}_{2,\delta}^3)}\, dz\, dt
\]
which can be bounded uniformly (using Lemma~\ref{lem:molly} and Lemma~\ref{liste} again) and therefore converges to 0.
The estimate on the part involving symmetric gradients is straight forward using the bounds in Lemma~\ref{lem:molly} and Lemma~\ref{liste}.
It remains to show that the first term in \eqref{con3} converges. For that we simply use the fact that we chose $t$ to be a Lebesgue point of all involved quantities. Hence by the very definition of $\mol$, we find that 
	\begin{align*}
	\lim_{\delta\to 0}(v_1(t),\hat{v}_{2,\delta}(t))=(v_1(t),\hat{v}_{2}(t)).
	\end{align*}
For the last statement observe that for all $p\in [1,2)$ by the calculations in Lemma~\ref{liste} that $w_2=v_2-\hat{v}_1\in L^2(0,T;W^{1,p}(\Omega_2))$ and therefore by Lemma~\ref{lem:molly} $w_{2,\delta}\in L^2(0,T;W^{1,p}(\Omega_2))$. This holds in particular for $p=r'$ which yields that the first two terms are in $L^1(0,T)$. Further, since $\nabla v_2 \in L^2(0,T;L^s(\Omega_2))$ for  $s>3$ H\"older's inequality implies for some $q>\frac65$ 
	$$\|[\nabla v_2] v_2 \|_{L^q(\Omega_2)}\le \|v_2\|_{L^2(\Omega_2)} \|\nabla v_2\|_{L^{2q'}(\Omega_2)}.$$
	Choosing $q>6/5$ such that $(2/q')<s$ bounds the right hand side in $L^2([0,T])$. As by embedding  $w_{2,\delta} \in L^2(0,T;L^a(\Omega_2)$ for all $a \in [1,6)$ we find that $[\nabla v_2] v_2\cdot w_2\in L^1(0,T;L^1(\Omega_2))$.	

\end{proof}

\subsection{The stability estimate (Proof of Theorem~\ref{the:2})}
We have collected all the necessary notations and estimates to start the stability estimate. The estimate is derived by testing first the equation of $(v_2,\eta_2)$ by $(w_{2,\delta},\partial_t \eta_\delta)$, second the energy inequality for $(v_1,\eta_1)$ and finally testing $(v_1,\eta_1)$ with $(\mol,\partial_t \eta_{2,\delta})$.

Testing the equation of $(v_2,\eta_2)$ by $(w_{2,\delta},\partial_t \eta_\delta)$, integration by parts and Reynold's transport theorem give
\begin{align} \begin{aligned} 
\label{eqv1}
&\int_0^t\skp{\partial_t v_2+[\nabla v_2] v_2}{w_{2,\delta}}_{\eta_2}+\skp{\nablasym v_2}{\nablasym w_{2,\delta}}_{\eta_2} -\skp{f_2}{w_{2,\delta}}_{\eta_2}~dt\\
&\quad +(\partial_t \eta_2,\partial_t \eta_\delta)-(\partial_t \eta_{2,0},\partial_t \eta_0)-\int_0^t(\partial_{t}\eta_2,\partial_t^2\eta_{\delta})-(\Delta \eta_2, \Delta \partial_t \eta_\delta)-(g_2, \partial_t \eta_{\delta})dt=0.
\end{aligned} 
\end{align}

We can write this 
\begin{align} \begin{aligned}
\label{eqv22}
&\int_0^t\skp{\partial_t v_2+[\nabla v_2] v_2
}{w_{2}}_{\eta_2}+\skp{\nablasym v_2}{\nablasym w_{2}}_{\eta_2} -\skp{f_2}{w_{2}}_{\eta_2}~dt
\\
&\quad +(\partial_t \eta_2,\partial_t \eta_\delta)-(\partial_t \eta_{2,0},\partial_t \eta_0)-\int_0^t(\partial_{t}\eta_2,\partial_t^2\eta_{\delta})-(\Delta \eta_2, \Delta \partial_t \eta_\delta)- (g_2,\partial_t \eta_{\delta})dt
= K_{1,\delta}
\end{aligned} \end{align}
where 
\begin{equation*}
K_{1\,\delta}:=\int_0^t\skp{\partial_t v_2+[\nabla v_2]v_2 }{w_2-w_{2,\delta}}_{\eta_2}+\skp{\nablasym v_2}{\nablasym (w_2-w_{2,\delta})}_{\eta_2}-\skp{f_2}{w_2-w_{2,\delta}}_{\eta_2}
\end{equation*}
Then $K_{1,\delta} \to 0$ for $\delta \to 0$ by Lemma~\ref{liste3}.

The next step is to transform the equation for $v_2, \eta_2$ to the domain $\Omega_1$. In particular we want to prove an estimate for
$$
\int_0^t  (\partial_t\hat{v}_2+\nabla \hat{v}_2\hat{v}_2, w_1)+(\nabla \hat{v}_2,\nabla w_1)-(\tilde{f}_2, w_1)\, dt
$$
First compute
\begin{align*}
&(\partial_t\hat{v}_2, w_1)=(\gamma J^{-1}\partial_t \tilde{v}_2+\partial_t(\gamma J^{-1})\tilde{v}_2), w_1))
\\
&=\skp{ \tilde{J}^{-1}((\partial_t \tilde{v}_2)\circ \psi^{-1})}{\tilde{w}_1}_{\eta_2}+(\partial_t(\gamma J^{-1}) \tilde{v}_2, w_1).
\end{align*}

By chain rule we get
$$(\partial_t\tilde{v}_2)\circ \psi^{-1}=\partial_t v_2+y\gamma^{-1}\partial_t \gamma\partial_y v_2,$$
Also using $w_2=\gamma^{-1}\tilde{J}\tilde{w_1}$ (cf. \eqref{eq:what}) this gives
\begin{align*}
\tilde{J}^{-1}(\partial_t \tilde{v}_2)\circ \psi^{-1} \cdot \tilde{w}_1
=\partial_t v_2 \cdot w_2+\partial_t v_2 \cdot (\tilde{J}^{-t}\tilde{w_1}-w_2)+y\gamma^{-1}\partial_t \gamma\tilde{J}^{-1}\partial_y v_2 \cdot \tilde{w}_1\\
=\partial_t v_2 \cdot w_2+\partial_t v_2 \cdot (\tilde{J}^{-t}-\gamma^{-1}\tilde{J})\tilde{w_1}+y\gamma^{-1}\partial_t \gamma\tilde{J}^{-1}\partial_y v_2 \cdot \tilde{w}_1,
\end{align*}

which yields
\begin{align} \begin{aligned}
\label{ineq1}
&\skp{\partial_t v_2 }{ w_2}_{\eta_2}=(\partial_t\hat{v}_2, w_1)-(\partial_t(\gamma J^{-1}) \tilde{v}_2, w_1)
-\skp{\partial_t v_2}{(\tilde{J}^{-t}-\gamma^{-1}\tilde{J})\tilde{w_1}}_{\eta_2}
\\
&\quad
+\skp{y\gamma^{-1}\partial_t \gamma\tilde{J}^{-1}\partial_y v_2} {\tilde{w}_1}_{\eta_2}=:(\partial_t\hat{v}_2, w_1) +R_1.
\end{aligned} 
\end{align}
\begin{proof}[Estimate of $R_1$]
	With similar estimates as in the proof of Lemma \ref{liste} we get
	\begin{equation}
	\label{tiJ}
	| \tilde{J}^{-t}-\gamma^{-1} \tilde{J}| \le C(|1-\gamma|+|\nabla \gamma|), \quad | \nabla(\tilde{J}^{-t}-\gamma^{-1} \tilde{J})| \le C(|\nabla \gamma|+|\nabla^2 \gamma|)
	\end{equation}
	Hence as in the proof of Lemma~\ref{liste} (1) we have (using also Lemma \ref{liste} (1)) 
	$$\|(\tilde{J}^{-t}-\gamma^{-1}\tilde{J})\tilde{w}_1\|_{W^{1,q}(\Omega_2)} \le C\|\eta\|_{2,2}\|w_1\|_{1,2}$$
	for all $q \in [1,2)$. This yields for $p'\in(2,r]$
	\begin{align*}\
	\skp{ \partial_tv_2 }{ (\tilde{J}^{-t}-\gamma^{-1}\tilde{J})\tilde{w}_1}_{\eta_2}  \le \|\partial_t v_2\|_{\tilde{W}^{-1,p'}(\Omega_2)}\|(\tilde{J}^{-t}-\gamma^{-1}\tilde{J})\tilde{w}_1\|_{W^{1,p}(\Omega_2)}\\
	\le C_\epsilon\|\partial_t v_2\|_{-1,r}^2 \|\eta\|_{2,2}^2 +\epsilon \|w_1\|_{1,2}^2.
	\end{align*}
	By Remark \ref{rem1} we have for $p \in (1,3/2)$, $q \in (p,3/2)$ and $a \in (6q/(6-q),2)$
	$$\||\partial_t \gamma||\nabla \gamma||\tilde{w}_1|\|_{L^p(\Omega_2)} \le \|\nabla \gamma\|_{1,2} \norm{\partial_t \gamma}\norm{\tilde{w}_1}_{L^q(\Omega_2)} \le \|\nabla \gamma\|_{1,2} ~\|\partial_t \gamma\|_2 \|\tilde{w}_1\|_{W^{1,a}(\Omega_2)}
	$$
	Thus by \eqref{tiJ} and Lemma \ref{liste}, we get for $p=s' \in (1,3/2)$
	\begin{align*}
	\skp{y\gamma^{-1}\partial_t\gamma\tilde{J}^{-1}\partial_yv_2}{\tilde{w}_1}_{\eta_2} 
	&\le C\|v_2\|_{1,s} \|~ \||\partial_t \gamma||\nabla \gamma||\tilde{w}_1||\|_{1,p}
	\\
	&\le C_\epsilon\|\partial_t \eta\|^2_2\|\eta\|_{2,2} \|v_2\|_{W^{1,s}(\Omega_2)}^2+\epsilon\|w_1\|_{1,2}^2.
	\end{align*}
	Next compute
	$$
	\partial_t(\gamma J^{-1})=\begin{pmatrix}
	\partial_t \gamma & 0 \\
	-y\partial_t\nabla\gamma &0.
	\end{pmatrix}.
	$$
	By H\"older's inequality we get for all $p \in (3,s)$ and $q=2(p/2)'<6$
	$$\||\nabla \tilde{v}_2|| w_1|\| \le \|\nabla \tilde{v}_2\|_{p} \|w_1\|_{q} \le \|v_2\|_{W^{1,s}(\Omega_2)}\|w_1\|_{1,2},$$
	also
	$$\||\tilde{v}_2||\nabla w_1|\|_2 \le \|\tilde{v}_2\|_\infty\|w_1\|_{1,2} \le \|v_2\|_{W^{1,s}(\Omega_2)}\|w_1\|_{1,2}$$
	This yields
	\begin{align*} (\partial_t(\gamma J^{-1}), \tilde{v}_2, w_1) 
	&\le C\|\partial_t \gamma\| \|\tilde{v}_2\|_\infty \|w_1\|_{1,2}+\|\partial_t \nabla\gamma\|_{-1,2}\|\tilde{v}_2^{1}w_1^2\|_{1,2}
	\\ 
	&\le
	C_\epsilon\|\partial_t \eta\|^2 \|v_2\|_{W^{1,s}(\Omega_2)}^2+\epsilon \|w_1\|_{1,2}^2.
	\end{align*}
	In conclusion
	\begin{equation}
	\label{ineq11}
	|R_1| \le C_\epsilon(\|\eta\|_{2,2}^2+\|\partial_t \eta\|^2)(\|v_2\|_{W^{1,s}(\Omega_2)}^2+\|\partial_tv_2\|_{\tilde{W}^{-1,r}(\Omega_2)}^2)+\epsilon \|w_1\|_{1,2}^2.
	\end{equation}
\end{proof}
To symplify Notation in the next step, for a Matrix $A \in \R^{3\times 3}$ we denote the symmetric part of it as $A^{s}=\frac{1}{2}(A+A^t)$. We get by transformation and chain rule
$$
\skp{\nablasym v_2}{\nablasym w_2}_{\eta_2}=(\gamma (\nabla \tilde{v_2}J^{-1})^s,(\nabla\tilde{w}_2J^{-1})^s)
$$
By \eqref{eq:what} 
\begin{align*}
\gamma\nabla\tilde{w}_2J^{-1}&=\gamma\nabla(\gamma^{-1}Jw_1)J^{-1}
=J\nabla w_1J^{-1}+\gamma\nabla(\gamma^{-1}J)w_1J^{-1}\\
&=\nabla w_1 +\nabla w_1 (J^{-1}-I) +(J-I)\nabla w_1J^{-1}+\gamma\nabla(\gamma^{-1}J)w_1J^{-1}.
\end{align*}
and using $\hat{v}_2=\gamma J^{-1}v_2$
\begin{align*}
\nabla \tilde{v}_2 J^{-1}=\nabla\tilde{v}_2+\nabla\tilde{v}_2(J^{-1}-I)
&=\nabla\hat{v}_2+\nabla((I-\gamma J^{-1})\tilde{v}_2)+\nabla\tilde{v}_2(J^{-1}-I)
\\
&=\nabla \hat{v}_2+(I-\gamma J^{-1})\nabla\tilde{v}_2-\nabla(\gamma J^{-1})\tilde{v}_2+\nabla\tilde{v}_2(J^{-1}-I)
\end{align*}
Hence
\begin{align} \begin{aligned}
\label{ineq2}
&\left(\gamma(\nabla\tilde{v}_2J^{-1})^s:(\nabla \tilde{w}_2J^{-1})^s\right)\\
&=\left((\nabla\tilde{v}_2J^{-1})^s,\nablasym w_1 +\left[\nabla w_1 (J^{-1}-I) +(J-I)\nabla w_1J^{-1}+\gamma\nabla(\gamma^{-1}J)w_1J^{-1}\right]^s\right)
\\
&=(\nablasym\hat{v}_2,\nablasym w_1)+\left((\nabla\tilde{v}_2J^{-1})^s, \left[\nabla w_1 (J^{-1}-I) +(J-I)\nabla w_1J^{-1}+\gamma\nabla(\gamma^{-1}J)w_1J^{-1}\right]^s\right)
\\
&\quad + \left(\left[(I-\gamma J^{-1})\nabla\tilde{v}_2-\nabla(\gamma J^{-1})\tilde{v}_2+\nabla\tilde{v}_2(J^{-1}-I)\right]^s,\nablasym w_1\right)\\
&=:(\nablasym \hat{v}_2:\nablasym w_1)+R_2.
\end{aligned} \end{align}
\begin{proof}[Estimate of $R_2$]
	By the definition of $J$ it is straightforward to see that
	\begin{align*}
	|J^{-1}| &\le C(1+|\nabla \gamma|),\\
	|J^{-1}-I| +|J-I|+|\gamma J^{-1}-I| &\le C(|\gamma-1|+|\nabla \gamma|).
	\end{align*}
	By H\"older's inequality we get $\||\nabla \tilde{v}_2||\nabla w_1|\|_{6/5} \le \|\nabla \tilde{v}_2\|_{3} \|\nabla w_1\|_{2}$ and thus for $p=6/5$
	\begin{align*}
	\left((\nabla\tilde{v}_2J^{-1})^s, \left[\nabla w_1 (J^{-1}-I) +(J-I)\nabla w_1J^{-1}\right]^s\right)+\left(\left[(I-\gamma J^{-1})\nabla\tilde{v}_2+\nabla\tilde{v}_2(J^{-1}-I)\right]^s,\nablasym w_1\right)\\
	\le C\|\nabla \gamma\|_{3p'} ~\||\nabla\tilde{v}_2|~ |\nabla w_1|\|_{p}
	\le \|\eta\|_{2,2}( \|\nabla v_2\|_{s} \|\nabla w_1\|)\\
	\le C_\epsilon\|\eta\|_{2,2}^2\| v_2\|^2_{W^{1,s}(\Omega_2)}+\epsilon \|\nabla w_1\|^2
	\end{align*}
	Furthermore as in the proof of Lemma \ref{liste} we get for $p \in (3,s)$ (i.e. $p'\in (s',3/2)$)
	\begin{align*}
	&\left((\nabla\tilde{v}_2J^{-1})^s:(\nabla(\gamma^{-1}J)w_1J^{-1})^s\right)
	\le C \|(1+|\nabla \gamma|+|\nabla \gamma|^2+|\nabla \gamma|^3)|\nabla \tilde{v}_2|\|_{p}\||\nabla^2 \gamma| |w_1|\|_{p'}
	\\
	&\quad \le C\|\eta\|_{2,2} \|\nabla v_2\|_{1,s}\|w_1\|_{1,2}
	\le C_\epsilon\|\eta\|_{2,2}^2\| v_2\|^2_{W^{1,s}(\Omega_2)}+\epsilon\|w_1\|^2_{1,2}
	\end{align*}
	and
	\begin{equation*}
	\left((\nabla(\gamma J^{-1})\tilde{v}_2)^s,\nablasym w_1\right) \le C\|(|\nabla^2 \gamma|+|\nabla \gamma|)|\tilde{v}_2|\| ~\|\nabla w_1\| \le 
	C_\epsilon\| v_2\|_{W^{1,s}(\Omega_2)}^2\|\eta\|_{2,2}^2+\epsilon\|w_1\|_{1,2}^2
	\end{equation*}
	In conclusion
	\begin{equation}
	\label{ineq21}
	|R_2| \le C_\epsilon\|\eta\|_{2,2}^2\| v_2\|^2_{W^{1,s}(\Omega_2)}+\epsilon\|w_1\|^2_{1,2}
	\end{equation}
\end{proof}
Next by chain rule and \eqref{eq:what} we get
\begin{align} \begin{aligned}
\label{ineq3}
\skp{[\nabla v_2]v_2}{w_2}_{\eta_2}&=
([\nabla \tilde{v}_2]\gamma J^{-1}\tilde{v}_2, \gamma^{-1}Jw_1)
= ([\nabla \tilde{v}_2]\gamma J^{-1}\tilde{v}_2, \gamma^{-1}Jw_1)
\\
&=([\nabla \tilde{v}_2]\hat{v}_2, w_1)+([\nabla \tilde{v}_2]\hat{v}_2, (\gamma^{-1}J-I)w_1)
\\
&=([\nabla\hat{v}_2]\hat{v}_2, w_1)+([\nabla((I-\gamma^{-1}J)\tilde{v_2})]\hat{v}_2, w_1)+([\nabla \tilde{v}_2]\hat{v}_2, (\gamma^{-1}J-I)w_1)
\\
&:=  ([\nabla\hat{v}_2]\hat{v}_2, w_1)+R_3
\end{aligned} \end{align}
\begin{proof}[Estimate on $R_3$]
	With similar estiamtes as above we can conclude
	\begin{align*}
	&([\nabla\tilde{v}_2]\hat{v}_2, (\gamma^{-1}J-I)w_1)
	\le C\|\hat{v}_2\|_\infty \|(\gamma^{-1}-J^t)\nabla \tilde{v}_2\| \|w_1\|\\
	&\quad \le 
	C\|\eta\|_{2,2}\| v_2\|^2_{W^{1,s}(\Omega_2)}\|w_1\| \le C\| v_2\|^2_{W^{1,s}(\Omega_2)}(\|\eta\|_{2,2}^2+\|w_1\|^2)
	\end{align*}
	Additionally 
	\begin{align*}
	([\nabla((I-\gamma^{-1}J)\tilde{v_2})]\hat{v}_2, w_1)
	&\le C\|\hat{v}_2\|_{L^\infty} \|w_1\|(\|\tilde{v}_2\|_\infty \|\nabla(\gamma J^{-1})\|+\|(I-\gamma J^{-1})\nabla \tilde{v}_2\|\\
	&\le C\| v_2\|^2_{W^{1,s}(\Omega_2)}(\|\eta\|_{2,2}^2+\|w_1\|^2)
	\end{align*}
	Thus 
	\begin{equation} 
	\label{ineq31}
	|R_3| \le  C\| v_2\|^2_{W^{1,s}(\Omega_2)}(\|\eta\|_{2,2}^2+\|w_1\|^2)
	\end{equation}
\end{proof}
Lastly by transformation rule and \eqref{eq:what}
\begin{equation}
\label{ineq4}
\skp{f_2}{w_2}_{\eta_2}=(\tilde{f}_2, \gamma \tilde{w}_2)=(\tilde{f}_2,w_1)-(\tilde{f}_2, (I-J)w_1)\equiv(\tilde{f}_2 ,w_1)-R_4.
\end{equation}
We find for all $p\in (1,\infty)$
\begin{equation}
\label{ineq41}
R_4 \le C\|(1-\gamma)\|_{p'} \||\tilde{f}_2||w_1|\|_p \le  C_{\epsilon}\|f_2\|^2_{L^2(\Omega_2)} \|\eta\|_{2,2}^2+\epsilon\|w_1\|_{1,2}^2
\end{equation}

Adding \eqref{ineq1}, \eqref{ineq2}, \eqref{ineq3}, \eqref{ineq4} and integrating over $(0,t)$ we get
\begin{align} \begin{aligned}
\label{eqfinal}
&\int_0^t( \partial_t\hat{v}_2+[\nabla\hat{v}_2]\hat{v}_2, w_1)+(\nablasym\hat{v}_2,\nablasym w_1)-(\tilde{f}_2, w_1)~dt\\
&\quad =\int_0^t\skp{\partial_t v_2+[\nabla v_2]v_2}{w_2}_{\eta_2}+\skp{\nablasym v_2}{\nablasym w_2}_{\eta_2}-\skp{f_2}{w_2} ~dt+R,
\end{aligned} \end{align}
where $R=\int_0^t R_1+R_2+R_3+R_4~dt$. By \eqref{ineq11}, \eqref{ineq21}, \eqref{ineq31}, \eqref{ineq41} we get
\begin{align} \begin{aligned}
\label{eqR}
|R| \le \int_0^t h_1(t)(\|\eta\|_{2,2}^2+\|\partial_t \eta\|^2_2+\|w_1\|^2_2)+\epsilon \|w_1\|_{1,2}^2~dt,\\
h_1(t)=C_\epsilon(\| v_2\|^2_{W^{1,s}(\Omega_2)}+\|\partial_t v_2\|_{W^{-1,s}(\Omega_2)}^2+\|f_2\|^2_{L^2(\Omega_2)}) \in L^1([0,T]).
\end{aligned} \end{align}

We can now estimate the differences of the solutions, namely we estimate 
\begin{align*}
I:&=\frac{1}{2} \|w_1\|^2+\frac{1}{2}\left( \|\partial_t \eta\|^2+\|\Delta \eta\|^2\right)+\int_0^t\|\nablasym w_1\|^2dt
\\
&=
\frac{1}{2} \|v_1\|^2+\frac{1}{2}\left( \|\partial_t \eta_1\|^2+\|\Delta \eta_1\|^2\right)
\\
&\quad
-(v_1(t),\hat{v}_2(t))- (\partial_t \eta_1,\partial_t \eta_2)-(\Delta \eta_1, \Delta \eta_2)
\\
&\quad +\frac{1}{2}\left(\|\hat{v}_2\|^2+\|\eta_2\|^2+\|\Delta\eta_2\|^2\right)
\\
&\quad
+\int_0^t \|\nablasym v_1\|^2 - (\nablasym v_1,\nablasym \hat{v}_2) -(\nablasym \hat{v}_2, \nablasym w_1)~dt.
\end{align*}
The energy inequality for $(v_1, \eta_1)$ gives
\begin{align} \begin{aligned}
\label{eqJ}
I &\le\frac{1}{2} (\|v_{1,0}\|^2+\|\eta_{1,0}^*\|^2+\|\Delta \eta_{1,0}\|^2)+\int_0^t (f_1, v_1)_{\eta_1}+(g_1,\partial_t \eta_1)~dt
\\
&\quad -( v_1(t),\hat{v}_2(t))-\int_0^t (\nablasym v_1,\nablasym \hat{v}_2)dt-(\partial_t \eta_1,\partial_t \eta_2)-(\Delta \eta_1, \Delta \eta_2)
\\
&\quad
+\frac{1}{2}(\|\hat{v}_2\|^2+\|\partial_t\eta_2\|^2+\|\Delta\eta_2\|^2)-\int_0^t (\nablasym \hat{v}_2, \nablasym w_1)~dt
\end{aligned} 
\end{align}
By \eqref{eqv22} and \eqref{eqfinal} we get
\begin{align*}
-\int_{0}^t(\nablasym \hat{v}_2,\nablasym w_1)~dt&= \int_0^t (\partial_t \hat{v}_2+([\nabla\hat{v}_2]\hat{v}_2, w_1)-(\tilde{f}_2,w_1)~dt
+(\partial_t \eta_2,\partial_t \eta_\delta)-(\eta_{2,0}^*,\eta_0^*)
\\
&\quad -\int_0^t\int_{\omega}\partial_{t}\eta_2\partial_t^2\eta_{\delta}-(\Delta \eta_2, \Delta \partial_t \eta_\delta)+(g_2,\partial_t \eta_\delta)dt+K^1_\delta+R.
\end{align*}
Reynold's transport theorem and $\hat{v}_2(x,\eta_1(x))=\partial_t\eta_2(x)$ gives 
\begin{align*}
\int_0^t (\partial_t \hat{v}_2, w_1) ~dt&=
\int_0^t (\partial_t \hat{v}_2, v_1-\hat{v}_2)~ dt
\\
&=-\frac{1}{2}(\|\hat{v}_2\|^2-\|\hat{v}_{2,0}\|^2- (\partial_t \eta_1, (\partial_t \eta_2)^2))
+\int_0^t(\partial_t \hat{v}_2 , v_1)~ dt
\end{align*}
Inserting this calculation in \eqref{eqJ}  yields
\begin{align*}
I &\le \frac{1}{2}((\|v_{1,0}\|^2+\|\hat{v}_{2,0}\|^2)-(v_1(t),\hat{v}_2(t))+\int_0^t (v_1,\partial_t \hat{v}_2)- (\nablasym v_1,\nablasym \hat{v}_2)+(f_1,v_1)-(\tilde{f}_2, w_1)dt
\\
&\quad +\frac{1}{2}(\|\eta_{1,0}^*\|^2+\|\Delta \eta_{1,0}\|^2+\|\partial_t\eta_2(t)\|^2+\|\Delta\eta_2(t)\|^2)-(\partial_t \eta_1(t),\partial_t \eta_2(t))-(\Delta \eta_1(t), \Delta \eta_2(t))
\\
&\quad+(\partial_t \eta_2(t),\partial_t \eta_\delta(t))-(\eta_{2,0}^*,\eta_0^*)-\int_0^t(\partial_{t}\eta_2,\partial_t^2
\eta_{\delta})-(\Delta \eta_2,\Delta \partial_t \eta_{\delta})-(g_1, \partial_t \eta_1)+(g_2,\partial_t \eta_\delta)~dt
\\
&\quad +\int_0^t([\nabla\hat{v}_2]\hat{v}_2, w_1)+\frac{1}{2}(\partial_t \eta_1, (\partial_t \eta_2)^2)+K^1_\delta+R
\end{align*}
We denote the first line of the right hand side as $I_1$ the second and third line as $I_2$ and the fourth line as $I_3$. 
We calculate that
$$
\frac{1}{2}(\|v_{1,0}\|^2+\|\hat{v}_{2,0}\|^2)+\int_0^t (f_1,v_1)-(\tilde{f}_2, w_1)~dt=(v_{1,0},\hat{v}_{2,0})+\frac12\|v_{1,0}-\hat{v}_{2,0}\|^2+\int_0^t (f_1, \hat{v}_2)+(f_1-\tilde{f}_2,w_1)~dt.
$$
Thus
\begin{align*}
I_1&=(v_{1,0},\hat{v}_{2,0})-(v_1(t),\hat{v}_2(t))+\int_0^t (v_1,\partial_t \hat{v}_2)- (\nabla v_1,\nabla \hat{v}_2)+(f_1,\hat{v}_2)~dt
\\
&\quad +\frac{1}{2}\|v_{1,0}-\hat{v}_{2,0}\|^2_2+\int_0^t (f_1-\tilde{f}_2,w_1)~dt
\end{align*}
We write the first line as
\begin{align*}
&(v_{1,0},\hat{v}_{2,0})-(v_1(t),\hat{v}_2(t))+\int_0^t (v_1,\partial_t \hat{v}_2)- (\nabla v_1,\nabla \hat{v}_2)+(f_1,\hat{v}_2)~dt
\\
&\quad = (v_{1,0},\hat{v}_{2,0})-(v_1(t),\hat{v}_{2,\delta}(t))+\int_0^t (v_1,\partial_t\hat{v}_{2,\delta})- (\nabla v_1,\nabla \hat{v}_{2,\delta}) )+(f_1,\hat{v}_{2,\delta})~dt+K_{2,\delta},
\end{align*}
with
$$K_{2,\delta}=-(v_1,\hat{v}_2-\hat{v}_{2,\delta})+\int_0^t (v_1,\partial_t \hat{v}_2-\partial_t\hat{v}_{2,\delta})- (\nabla v_1,\nabla \hat{v}_2-\nabla \hat{v}_{2,\delta})+(f_1,\hat{v}_2-\hat{v}_{2,\delta})~dt,$$
which converges to zero for $\delta \to 0$ by Lemma \ref{liste3}.
We divide $I_2$ into the parts that depend solely on $\eta_2$ and the rest:
\begin{align*}
I_2&=\frac{1}{2}(\|\Delta \eta_2\|^2-\|\partial_t \eta_2\|^2) +\|\eta_{2,0}^*\|^2+\int_0^t (\partial_t \eta_2,\partial_t^2 \eta_{2,\delta})-(\Delta \eta_2,\partial_t \Delta \eta_{2,\delta})~dt
\\
&\quad +\frac{1}{2}(\|\eta_{1,0}^*\|^2+\|\Delta \eta_{1,0}\|^2)-(\partial_t \eta_1(t),\partial_t \eta_2(t))-(\Delta \eta_1(t), \Delta \eta_2(t))
\\
&\quad+(\partial_t \eta_2(t),\partial_t \eta_{1,\delta}(t))-(\eta_{2,0}^*,\eta_{1,0}^*)-\int_0^t(\partial_{t}\eta_2,\partial_t^2
\eta_{1,\delta})-(\Delta \eta_2,\Delta \partial_t \eta_{1,\delta})-(g_1, \partial_t \eta_1)+(g_2,\partial_t \eta_\delta)~dt
\end{align*}
%
We denote the first line by $I_{21}$ and find that
\begin{align*}
I_{21}&=\frac{1}{2}(\|\eta_{2,0}*\|^2+\|\Delta \eta_{2,0}\|^2)
\\
&\quad
+\frac{1}{2}(\|\eta_{2,0}^*\|^2-\|\partial_t\eta_2\|^2+\|\Delta \eta_2\|^2-\|\Delta\eta_{2,0}\|^2)
+\int_0^t (\partial_t \eta_2,\partial_t^2 \eta_{2,\delta})-(\Delta \eta_2,\partial_t \Delta \eta_{2,\delta})~dt
\\
&=: \frac{1}{2}(\|\eta_{2,0}^*\|^2+\|\Delta \eta_{2,0}\|^2)+K_{3,\delta}
\end{align*}
where $K_{3,\delta} \to 0$ for $\delta \to 0$ by Lemma~\ref{lemma}.

Collecting the above we arrive at
\begin{align} \begin{aligned}
\label{I100}
I &\le(v_{1,0},\hat{v}_{2,0})-(v_1(t),\hat{v}_{2,\delta}(t))+\int_0^t (v_1,\partial_t\hat{v}_{2,\delta})- (\nablasym v_1,\nablasym \hat{v}_{2,\delta}) )+(f_1,\hat{v}_{2,\delta})~dt
\\
&\quad +\frac{1}{2}(\|\eta_{1,0}^*\|^2+\|\eta_{2,0}^*\|^2+\|\Delta \eta_{1,0}\|^2+\|\Delta \eta_{2,0}\|^2)+(\partial_t \eta_{1,\delta}(t)-\partial_t \eta_1(t), \partial_t \eta_2(t))-(\Delta \eta_1(t), \Delta \eta_2(t))
\\
&\quad -(\eta_{2,0}^*,\eta_{1,0}^*)-\int_0^t(\partial_{t}\eta_2,\partial_t^2\eta_{1,\delta})-(\Delta \eta_2,\Delta \partial_t \eta_{1,\delta})-(g_1,\partial_t \eta_1)+(g_2, \partial_t \eta_\delta)~dt
\\
&\quad
+\frac{1}{2}\|v_{1,0}-\hat{v}_{2,0}\|^2_2+\int_0^t (f_1-\tilde{f}_2,w_1)~dt+I_3+K_{2,\delta}+K_{3,\delta}.
\end{aligned} \end{align}
Now we use the equation vor $(v_1, \eta_1)$ and test it with $\mol$:
\begin{align} \begin{aligned}
\label{help}
&(v_{1,0},\hat{v}_{2,0})-(v_1(t),\hat{v}_{2,\delta}(t))+\int_0^t (v_1,\partial_t\hat{v}_{2,\delta})- (\nablasym v_1,\nablasym \hat{v}_{2,\delta})+(f_1,\hat{v}_{2,\delta})~dt
\\
&\quad =
-\int_0^t(v_1 \otimes v_1, \nabla \hat{v}_{2,\delta})~dt+(\partial_t \eta_1(t) ,\partial_t \eta_{2,\delta}(t))-(\eta_{1,0}^*,\eta_{2,0}^*)
\\
&\qquad -\int_0^t (\partial_t \eta_1,\partial_t^2 \eta_{2,\delta})-(\Delta \eta_1,\Delta \partial_t \eta_{2,\delta})+(g_1,\partial_t \eta_{2,\delta})~dt
\end{aligned} 
\end{align}
Note that 
\begin{align*}
&\frac{1}{2}(\|\eta_{1,0}^*\|^2+\|\eta_{2,0}^*\|^2+\|\Delta \eta_{1,0}\|^2+\|\Delta \eta_{2,0}\|^2)\\
&\quad =(\eta_{1,0}^*,\eta_{2,0}^*)+(\Delta \eta_{1,0},\Delta \eta_{2,0})+\frac{1}{2}(\|\eta_{1,0}^*-\eta_{2,0}^*\|^2+\|\Delta \eta_{1,0}-\Delta \eta_{2,0}\|^2)
\end{align*}
and 
\begin{align*}
(g_1,\partial_t \eta_1)-(g_2, \partial_t \eta_\delta)&=(g_1, \partial_t \eta_1)-(g_2,\partial_t \eta)+(g_2,\partial_t \eta -\partial_t \eta_\delta)
\\
&=(g_1,\partial_t \eta_2)+(g_1-g_2,\partial_t \eta)+(g_2,\partial_t \eta -\partial_t \eta_\delta).
\end{align*}
This gives
\begin{align} \begin{aligned}
\label{I200}
I \le -\int_0^t([\nabla \hat{v}_2]v_1,v_1)~dt+\frac{1}{2}(\|v_{1,0}-\hat{v}_{2,0}\|^2_2+\|\eta_{1,0}^*-\eta_{2,0}^*\|^2+\|\Delta \eta_{1,0}-\Delta \eta_{2,0}\|^2)\\
+\int_0^t (f_1-\tilde{f}_2,w_1)+(g_1-g_2,\partial_t\eta)~dt+K_{2,\delta}+K_{3,\delta}+K_{4,\delta}+I_3
\end{aligned} \end{align}
where
\begin{align*}
K_{4,\delta}&=(\Delta \eta_{1,0},\Delta \eta_{2,0})-(\Delta \eta_1(t),\Delta\eta_2(t))+\int_0^t (\Delta \eta_2,\partial_t \Delta \eta_{1,\delta})+(\Delta \eta_1, \partial_t\Delta\eta_{2,\delta})~dt
+(\partial_t \eta_{1,\delta}-\partial_t \eta_1,\partial_t \eta_2)
\\
&\quad +(\partial_t \eta_1(t),\partial_t \eta_{2,\delta}(t))-(\eta_{1,0}^*,\eta_{2,0}^*)-\int_0^t (\partial_t \eta_2, \partial_t^2\eta_{1,\delta})+(\partial_t \eta_1, \partial_t^2 \eta_{2,\delta})~dt
\\
&\quad+\int_0^t (g_1,\partial_t \eta_2-\partial_t \eta_{2,\delta})+(g_2,\partial_t \eta-\partial_t \eta_\delta)~dt
\end{align*}
\begin{proof}[Proof that $K_{4,\delta}\to 0$]
	The first and third line of $K_{4,\delta} $ converge to $0$ again by Lemma $\ref{lemma}$.
	We write the second line as
	\begin{align*}
	&(\partial_t \eta_1(t),\partial_t \eta_{2,\delta}(t))-(\eta_{1,0}^*,\eta_{2,0}^*)-\int_0^t (\partial_t \eta_2, \partial_t^2\eta_{1,\delta})+(\partial_t \eta_1, \partial_t^2 \eta_{2,\delta})~dt
	\\
	&\quad =(\partial_t \eta_1(t),\partial_t \eta_{2,\delta}(t)-\partial_t \eta_2(t))
	+(\partial_t \eta_1(t),\partial_t \eta_2(t))-(\eta_{1,0}^*,\eta_{2,0}^*)-\int_0^t (\partial_t \eta_2, \partial_t^2\eta_{1,\delta})+(\partial_t \eta_1, \partial_t^2 \eta_{2,\delta})~dt,
	\end{align*}
	which also converges to $0$ for $\delta \to 0$ by Lemma~\ref{lemma}. Thus $K_{4,\delta} \to 0$ for $\delta \to 0$.
\end{proof}
We continue by writing
\begin{align*}-\int_0^t(v_1 \otimes v_1, \nabla \hat{v}_{2,\delta})~dt
&=-\int_0^t(v_1 \otimes v_1, \nabla \hat{v}_{2})~dt+\int_0^t (v_1 \otimes v_1,\nabla \hat{v}_{2}-\nabla \hat{v}_{2,\delta})~dt
\\
&=:-\int_0^t(v_1 \otimes v_1,\nabla \hat{v}_{2})~dt+K_{5,\delta},
\end{align*}
where $K_{5,\delta} \to 0$ by Lemma \ref{liste3}. Inserting this and the definition of $I_3$ in \eqref{I200} finally yields
\begin{align} \begin{aligned}
\label{I300}
I &\le \int_0^t- (v_1 \otimes v_1,\nabla \hat{v}_{2})
+([\nabla\hat{v}_2]\hat{v}_2, w_1)+\frac{1}{2}(\partial_t \eta_1, (\partial_t \eta_2)^2)
~dt\\
&+\frac{1}{2}(\|v_{1,0}-\hat{v}_{2,0}\|^2_2+\|\eta_{1,0}^*-\eta_{2,0}^*\|^2+\|\Delta \eta_{1,0}-\Delta \eta_{2,0}\|^2)
+\int_0^t (f_1-\tilde{f}_2,w_1)+(g_1-g_2,\partial_t\eta)~dt\\
&+R+K_{1,\delta}+K_{2,\delta}+K_{3,\delta}+K_{4,\delta}+K_{5,\delta}
\end{aligned} \end{align}
The first line can be estimated as follows. As $\di v_1=0$ we get by Gau\ss   integral formula
$$([\nabla \hat{v}_2]v_1,\hat{v}_2)_{\eta_1}=\frac{1}{2}((\partial_t\eta_2)^2),\partial_t\eta_1)_{\omega}$$
Hence 
$$(v_1 \otimes v_1, \nabla\hat{v}_2)=([\nabla\hat{v}_2]v_1,v_1-\hat{v}_2)+([\nabla\hat{v}_2]v_1,\hat{v}_2)=([\hat{v}_2]v_1,w_1)+\frac{1}{2}((\partial_t\eta_2)^2),\partial_t\eta_1)$$
Thus we get
$$
\int_0^t- (v_1 \otimes v_1,\nabla \hat{v}_{2})
+([\nabla\hat{v}_2]\hat{v}_2, w_1)+\frac{1}{2}(\partial_t \eta_1, (\partial_t \eta_2)^2)~dt=-\int_0^t ([\nabla \hat{v}_2]w_1,w_1)~dt
$$
We can estimate this term the same way as~\eqref{con2} in Lemma \ref{liste3} by replacing $v_1$ by $w_1$ and $\partial_t\eta_1$ by $\partial_t \eta$. We find
\begin{align*}
([\nabla \hat{v}_2]w_1,w_1) &  \le C_\epsilon (\|v_2\|_{W^{1,s}(\Omega_2)}+1)^2(\|\eta_1\|_{1,\infty}+\|\eta_2\|_{1,\infty}+1)^2(\|w_1\|^2+\|\partial_t \eta\|^2)+\epsilon \| w_1\|_{1,2}^2
\end{align*}
Thus
\begin{align*}
\int_0^t ([\nabla \hat{v}_2]w_1,w_1)~dt &\le \int_0^t h_2(t)(\|\partial_t \eta\|^2+\|w_1\|^2)+\epsilon \|\nabla w_1\|_{2}^2~dt,\\
&h_2(t)=(\|v_2\|_{W^{1,s}(\Omega_2)}+1)^2(\|\eta_1\|_{1,\infty}+\|\eta_2\|_{1,\infty}+1)^2.
\end{align*} 
As $v_2 \in L^r(0,T;W^{1,s}(\Omega_2))$ ($r>2$) and $\eta_1, \eta_2 \in L^p(0,T;W^{1,\infty}(\omega))$ for all $p \in [1,\infty)$ (by Theorem~\ref{thm:boris} and interpolation) we get $h_2 \in L^1([0,T])$.

Thus recalling the estimate on $R$ \eqref{eqR} we get
	\begin{align*}
	\label{eqtR}
	\int_0^t ([\nabla \hat{v}_2]w_1,w_1)~dt+R &\le \int_0^t h(t)(\|\eta\|_{2,2}^2+\|\partial_t \eta\|^2+\|w_1\|^2)+\epsilon \|\nabla w_1\|_{2}^2~dt,\\
	h&=h_1+h_2 \in L^1([0,T]).
	\end{align*}
	Since $K_{i,\delta} \to 0$ for $\delta \to 0$ ($i=1,\ldots,5$)  the last estimate leads to
	\begin{align*}
	&\frac{1}{2}( \|w_1\|^2+\|\partial_t \eta\|^2+\|\Delta \eta\|^2)+\int_0^t\|\nablasym w_1\|^2dt\\
	&\le \frac{1}{2}(\|v_{1,0}-\hat{v}_{2,0}\|^2_2+\|\eta_{1,0}^*-\eta_{2,0}^*\|^2+\|\Delta \eta_{1,0}-\Delta \eta_{2,0}\|^2)+\int_0^t\|f_1-\tilde{f}_2\|_2+ \|g_1-g_2\|_2^2~dt\\
	&\quad +\int_0^t h(t)(\|\eta\|_{2,2}^2+\|\partial_t \eta\|^2+\|w_1\|^2)+\epsilon \|w_1\|_{1,2}^2~dt
	\end{align*}
As $\eta$ is $0$ on the boundary $\|\eta\|_{2,2} \sim \|\Delta \eta\| $. Korn's inequality and the $0$ trace of $w_1$ on $B_c$ implies that $\norm{w_1}_{1,2}\sim \norm{\nablasym w_1}_2 $. 
Hence choosing $\epsilon<1$ small enough we can apply Gronwall's Lemma. this implies a stability estimate in terms of $w_1$. In order to change to $v_1-\tilde{v}_2$ one uses
\[
\norm{w_1}_2\leq \norm{v_1-\tilde{v}_2}_2+\norm{\tilde{v}_2-\hat{v}_2} \leq \norm{v_1-\tilde{v}_2}_2+C\|\eta\|_{1,2};
\]
the estimate on the gradients is analogous.
This finishes the proof of Theorem~\ref{the:2}.
\begin{proof}[Proof of Theorem~\ref{the:1}]
	Let $(v_1,\pi_1,\eta_1)$ be a weak solution (with $\eta>0$) on $[0,T]$ for any $T>0$. Then the stability estimate implies that $\|w_1\|=\|\partial_t \eta\|=\|\eta\|_{2,2}=0$ a.e. in $[0,T]$. As $\eta=\eta_1-\eta_2=0$ we have $\Omega_1=\Omega_2$ and in particular the transformation $\psi$ is the identitiy, $\gamma=1$, $J=\mathbb{I}$. Thus $\hat{v}_2=v_2$ and $w_1=0$ gives $v_1=v_2$. This proves Theorem \ref{the:1}. 
\end{proof}

\bibliographystyle{plain}


\end{document}